\documentclass[12pt]{article}

\usepackage{enumerate}
\usepackage{amsmath}
\usepackage{amssymb,latexsym}
\usepackage{amsthm}
\usepackage{color}

\usepackage{graphicx}

\usepackage{hyperref}

\usepackage{amscd}

\newtheorem{theorem}{Theorem}[section]

\newtheorem{lemma}[theorem]{Lemma}
\newtheorem{corollary}[theorem]{Corollary}

\newtheorem{proposition}[theorem]{Proposition}

\newtheorem{example}[theorem]{Example}

\newtheorem{remark}[theorem]{Remark}

\newcommand{\cL}{{\mathcal L}}

\newcommand{\F}{{\mathbb F}}

\newcommand{\la}{\langle}
\newcommand{\ra}{\rangle}

\newcommand{\PG}{\mathrm{PG}}
\newcommand{\N}{\mathrm{N}}
\newcommand{\tr}{\mathrm{Tr}}


\title{On the number of roots of some linearized polynomials}
\author{Olga Polverino and Ferdinando Zullo\thanks{
This research was supported by the project ``VALERE: VAnviteLli pEr la RicErca" of the University of Campania ``Luigi Vanvitelli'', and by the Italian National Group for Algebraic and Geometric Structures and their Applications (GNSAGA
- INdAM). }}

\begin{document}
\maketitle

\begin{abstract}
Linearized polynomials appear in many different contexts, such as rank metric codes, cryptography and linear sets, and the main issue regards the characterization of the number of roots from their coefficients.
Results of this type have been already proved in \cite{Cs2019,teoremone,McGuireSheekey}.
In this paper we provide bounds and characterizations on the number of roots of linearized polynomials of this form
\[ ax+b_0x^{q^s}+b_1x^{q^{s+n}}+b_2x^{q^{s+2n}}+\ldots+b_{t-1}x^{q^{s+n(t-1)}} \in \F_{q^{nt}}[x], \]
with $\gcd(s,n)=1$.
Also, we characterize the number of roots of such polynomials directly from their coefficients, dealing with matrices which are much smaller than the relative Dickson matrices and the companion matrices used in the previous papers.
Furthermore, we develop a method to find explicitly the roots of a such polynomial by finding the roots of a $q^n$-polynomial.
Finally, as an applications of the above results, we present a family of linear sets of the projective line whose points have a small spectrum of possible weights, containing most of the known families of scattered linear sets. In particular, we carefully study the linear sets in $\PG(1,q^6)$ presented in \cite{CMPZ}.
\end{abstract}

\bigskip
{\it AMS subject classification:} 11T06, 15A04, 51E20

\bigskip
{\it Keywords:} Linearized Polynomial, Semilinear Transformation, Linear Set

\section{Introduction}

Linearized polynomials over $\F_{q^n}$ are important objects in the theory of finite fields and in finite geometry since they correspond to $\F_q$-linear transformations of the $n$-dimensional $\F_q$-vector space $\F_{q^n}$, and can be used to describe related objects such as $\F_q$-subspaces, rank metric codes, $\F_q$-linear sets.
A fundamental problem in the theory of linearized polynomials over finite fields is the characterization of the number of roots in the coefficient field directly from their coefficients. In this paper we provide results of this type.

\smallskip

A $\sigma$-\emph{polynomial} (or \emph{linearized polynomial}) over $\F_{q^n}$ is a polynomial of the form
\[f(x)=\sum_{i=0}^t a_i x^{\sigma^i},\]
where $a_i\in \F_{q^n}$, $t$ is a positive integer and $\sigma$ a generator of the Galois group $\mathrm{Gal}(\F_{q^n}\colon \F_q)$.
Furthermore, if $a_t \neq 0$ we say that $t$ is the $\sigma$-\emph{degree} of $f$.
We will denote by $\mathcal{L}_{n,q,\sigma}$ the set of all $\sigma$-polynomials over $\F_{q^n}$ (or simply by $\mathcal{L}_{n,q}$ if $x^\sigma=x^q$) and by $\tilde{\mathcal{L}}_{n,q,\sigma}$ (or by $\tilde{\mathcal{L}}_{n,q}$ if $x^\sigma=x^q$) the following quotient $\mathcal{L}_{n,q,\sigma}/(x^{\sigma^n}-x)$.
The polynomials in $\tilde{\mathcal{L}}_{n,q,\sigma}$ are precisely those which define $\F_q$-linear maps.
In the remainder of this paper we shall
always silently identify the elements of $\tilde \cL_{n,q,\sigma}$ with the
endomorphisms of $\F_{q^n}$ they represent and, as such,
speak also of \emph{kernel} and \emph{rank} of a polynomial.
Clearly, the kernel of $f\in \tilde \cL_{n,q,\sigma}$ coincides with the set of the roots of $f$ and as usual $\dim_{\F_q} \mathrm{Im}(f)+\dim_{\F_q} \ker(f)=n$.

\smallskip

The number of roots of a $\sigma$-polynomial over a cyclic extension of a field $\mathbb{F}$ (including the case of finite fields) is bounded as follows.

\begin{theorem}\cite[Theorem 5]{GQ2009}
\label{Gow}
Let $\mathbb{L}$ be a cyclic extension of a field $\F$ of degree $n$, and suppose that $\sigma$ generates the Galois group of $\mathbb{L}$ over $\F$. Let $k$ be an integer satisfying $1 \leq k \leq n$, and let $a_0,a_1,\ldots,a_k$ be elements of $\mathbb{L}$, not all of them are zero. Then the $\F$-linear transformation of $\mathbb{L}$ defined as
\[ f(x)=a_0x+a_1x^\sigma+\cdots+a_{k}x^{\sigma^{k}} \]
has kernel with dimension at most $k$ in $\mathbb{L}$.
\end{theorem}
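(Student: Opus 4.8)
The plan is to prove the bound by induction on $k$, reducing the $\sigma$-degree by one at each step by splitting off a linear ``root factor.'' Throughout I regard $f$ as the $\F$-linear endomorphism $\sum_{i=0}^{k}a_i\sigma^i$ of $\mathbb{L}$, where $\sigma^i$ is the $i$-th iterate of the generator; composition of such endomorphisms corresponds to multiplication in the twisted (Ore) polynomial ring $\mathbb{L}[t;\sigma]$, in which $t\,c=\sigma(c)\,t$. Since the bound is trivial when $k=n$ or when $\ker f=\{0\}$, I may assume $f$ has a nonzero root, and, after replacing $k$ by the true $\sigma$-degree, that the top coefficient $a_k$ is nonzero. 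The base case $k=0$ is immediate, as $f=a_0x$ with $a_0\neq0$ is injective.

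First I would fix a nonzero $v\in\ker f$ and introduce the degree-one $\sigma$-polynomial $d_v(x)=x^\sigma-\lambda x$ with $\lambda=v^\sigma/v$. A direct check shows $d_v(v)=0$, and more precisely that $\ker d_v=\F v$ is one-dimensional: indeed $d_v(x)=0$ forces $(x/v)^\sigma=x/v$, so $x/v$ lies in the fixed field $\F$. Next I would divide $f$ on the right by $d_v$; because $d_v$ is monic of $\sigma$-degree $1$, the usual division algorithm in $\mathbb{L}[t;\sigma]$ applies and yields $f=q\circ d_v+r$ with $r$ of $\sigma$-degree less than $1$, i.e. $r$ is multiplication by a scalar $c\in\mathbb{L}$, while $q$ has $\sigma$-degree $k-1$ and leading coefficient $a_k\neq0$. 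Evaluating at $v$ gives $0=f(v)=q(d_v(v))+cv=cv$, whence $c=0$ and $f=q\circ d_v$.

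Finally I would read off the kernel bound from this factorization together with the induction hypothesis applied to $q$. Since $f=q\circ d_v$, we have $\ker f=d_v^{-1}(\ker q)$, and $\ker d_v\subseteq\ker f$ because any $w$ with $d_v(w)=0$ satisfies $f(w)=q(0)=0$. Restricting $d_v$ to $\ker f$, whose image lands in $\ker q$, and applying rank–nullity gives $\dim_\F\ker f=\dim_\F\ker d_v+\dim_\F d_v(\ker f)\le 1+\dim_\F\ker q$. As $q$ is a nonzero $\sigma$-polynomial of $\sigma$-degree $k-1$, the inductive hypothesis yields $\dim_\F\ker q\le k-1$, and therefore $\dim_\F\ker f\le k$, completing the induction.

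I expect the main obstacle to be the right-division step: one must justify the division algorithm in the noncommutative ring $\mathbb{L}[t;\sigma]$ and verify that the remainder really is a scalar map, so that evaluating at the chosen root $v$ forces it to vanish. The monicity of $d_v$ is exactly what makes this work over an \emph{arbitrary} cyclic extension, with no appeal to finiteness, root-counting, or separability beyond $\sigma$ being an automorphism. A secondary point to get right is the bookkeeping in the rank–nullity step, namely the inclusion $\ker d_v\subseteq\ker f$ and the containment $d_v(\ker f)\subseteq\ker q$, which are precisely what turn the factorization $f=q\circ d_v$ into the additive estimate on dimensions.
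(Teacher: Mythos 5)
Your proof is correct, but there is no internal proof to compare it against: the paper states this result (its Theorem 1.1) as a quotation from Gow and Quinlan \cite[Theorem 5]{GQ2009} and uses it as a black box, so the only meaningful comparison is with the cited source. Your route --- fix a nonzero root $v$, split off the monic degree-one twisted factor $d_v(x)=x^\sigma-(v^\sigma/v)\,x$ by right division in the Ore ring $\mathbb{L}[t;\sigma]$, and induct on the $\sigma$-degree via rank--nullity --- is sound and complete: monicity of $d_v$ makes the division algorithm unproblematic even in the noncommutative setting, the scalar remainder vanishes upon evaluation at $v$ because Ore multiplication realizes composition of the associated $\F$-endomorphisms, and the only place the cyclic Galois hypothesis enters is the identity $\mathrm{Fix}(\sigma)=\F$, which forces $\ker d_v=\F v$ via $(x/v)^\sigma=x/v$; your reductions (trivial case $k=n$, passing to the true $\sigma$-degree, base case $k=0$) are all legitimate. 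The original argument of Gow and Quinlan is genuinely different: it runs through a nonsingularity statement for Moore-type matrices over arbitrary cyclic extensions --- if $u_1,\ldots,u_{k+1}\in\ker f$ were $\F$-independent, the $(k+1)\times(k+1)$ matrix $\bigl(u_j^{\sigma^i}\bigr)_{0\le i\le k}$ would be invertible, contradicting the nontrivial relation $\sum_{i} a_i u_j^{\sigma^i}=0$ imposed by the coefficients. Each approach buys something: theirs produces the matrix nonsingularity lemma as a reusable byproduct (an independence statement of the same flavor as the step ``$\F_q$-independent fixed points of $H$ are $\F_{q^n}$-independent'' in the paper's proof of Theorem 1.5), while yours yields an explicit operator factorization $f=q\circ d_v$, reduces the theorem to the Ore-ring analogue of ``a degree-$k$ polynomial has at most $k$ roots,'' and makes visible that nothing is needed beyond $\sigma$ being an automorphism of $\mathbb{L}$ with fixed field $\F$, finiteness of the extension entering only through the trivial case $k=n$.
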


In \cite{teoremone}, $\sigma$-polynomials over finite fields for which the dimension of the kernel coincides with their $\sigma$-degree are called \emph{linearized polynomials with maximum kernel}.
In order to determine the number of roots over $\F_{q^n}$ of a $\sigma$-polynomial we recall the following two matrices:
let $f(x)=a_0x+a_1x^{\sigma}+\ldots+a_{k}x^{\sigma^k}$ be a $\sigma$-polynomial over $\F_{q^n}$ with $\sigma$-degree $k$ with $1\leq k\leq n$, then its \emph{Dickson matrix}\footnote{This is sometimes called \emph{autocirculant matrix}.} $D(f)$ is defined as
\[D(f):=
\begin{pmatrix}
a_0 & a_1 & \ldots & a_{n-1} \\
a_{n-1}^{\sigma} & a_0^{\sigma} & \ldots & a_{n-2}^{\sigma} \\
\vdots & \vdots & \vdots & \vdots \\
a_1^{\sigma^{n-1}} & a_2^{\sigma^{n-1}} & \ldots & a_0^{\sigma^{n-1}}
\end{pmatrix} \in \F_{q^n}^{n\times n}
,\]
where $a_i=0$ for $i>k$, and its \emph{companion matrix} $C_f$ is defined as
\[C_f=\left(
	\begin{array}{cccccccccccc}
	0 & 0 & \cdots & 0 & -a_0/a_k \\
	1 & 0 & \cdots & 0 & -a_1/a_k \\
	0 & 1 & \cdots & 0 & -a_2/a_k \\
	\vdots & \vdots & & \vdots & \vdots \\
	0 & 0 & \cdots & 1 & -a_{k-1}/a_k
	\end{array}
	\right)\in \F_{q^n}^{k\times k}.\]

\smallskip

We briefly recall the roles of these matrices for the known results about the number of roots of a linearized polynomial.
It is well-known that for a $q$-polynomial $f$ over $\F_{q^n}$ we have that $\dim_{\F_q}\,\ker\,f=n-\mathrm{rk}\,D(f)$, see e.g. \cite[Proposition 4.4]{wl}.
Very recently, Csajb\'ok in \cite{Cs2019} shows that in order to determine the rank of $D(f)$ it is enough to look at some of its special minors.
Denote by $D_m(f)$ the $(n-m)\times(n-m)$ matrix obtained from $D(f)$ after removing its first $m$ columns and last $m$ rows.

\begin{theorem}\cite[Theorem 3.4]{Cs2019}\label{bence}
Let $f(x)=a_0x+a_1x^{\sigma}+\ldots+a_kx^{\sigma^k} \in \tilde{\mathcal{L}}_{n,q,\sigma}$.
Then $\dim_{\F_q} \ker\, f=m$ if and only if
\[ \det D_0(f)=\det D_1(f)= \ldots =\det D_{m-1}(f)=0 \]
and $\det D_m(f) \neq 0$.
\end{theorem}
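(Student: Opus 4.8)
The plan is to recast everything through the Dickson matrix and then to reduce the two-sided vanishing condition to a single non-degeneracy fact. Write $D=D(f)$ and set $m_0=\dim_{\F_q}\ker f$, so that by the quoted identity $\dim_{\F_q}\ker f=n-\mathrm{rk}\,D$ we have $\mathrm{rk}\,D=n-m_0$. I claim the whole theorem follows from two assertions: (a) the corner minor $D_{m_0}(f)$ is nonsingular, and (b) $D_\ell(f)$ is singular for every $\ell<m_0$. Granting these, if $\dim_{\F_q}\ker f=m$ then $m=m_0$, so (b) gives $\det D_0(f)=\cdots=\det D_{m-1}(f)=0$ and (a) gives $\det D_m(f)\neq 0$; conversely, if this vanishing pattern holds for some $m$, then $m<m_0$ is excluded by (b) (which would force $\det D_m(f)=0$) and $m>m_0$ is excluded by (a), since then $m_0\le m-1$ and $D_{m_0}(f)$ would be among the singular matrices $D_0(f),\dots,D_{m-1}(f)$. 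Hence $m=m_0$.

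Assertion (b) is immediate: $D_\ell(f)$ is an $(n-\ell)\times(n-\ell)$ submatrix of $D$, so $\mathrm{rk}\,D_\ell(f)\le\mathrm{rk}\,D=n-m_0<n-\ell$ whenever $\ell<m_0$, and a square matrix of rank below its size is singular. The heart is (a). Here I use the standard minor criterion: since $D_{m_0}(f)$ is a $(\mathrm{rk}\,D)\times(\mathrm{rk}\,D)$ submatrix (rows $0,\dots,n-m_0-1$ and columns $m_0,\dots,n-1$), it is nonsingular if and only if those rows of $D$ are linearly independent and those columns of $D$ are linearly independent. So (a) splits into a column statement and a row statement, each of which I will settle using the explicit structure of the kernels of $D$.

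For the columns, recall $D_{ij}=a_{(j-i)\bmod n}^{\sigma^i}$; a direct computation shows that for every root $x$ of $f$ the vector $u^{(x)}=(x,x^{\sigma},\dots,x^{\sigma^{n-1}})^{T}$ lies in the right kernel of $D$, because the $i$-th entry of $Du^{(x)}$ equals $f(x)^{\sigma^{i}}=0$. Taking an $\F_q$-basis $x_1,\dots,x_{m_0}$ of $\ker f$, the vectors $u^{(x_1)},\dots,u^{(x_{m_0})}$ are independent over $\F_{q^n}$ since the submatrix of their first $m_0$ coordinates is the Moore matrix $(x_i^{\sigma^{j}})_{1\le i\le m_0,\,0\le j\le m_0-1}$, which is nonsingular exactly because the $x_i$ are $\F_q$-independent; as the right nullity of $D$ is $m_0$, they form a basis of the right kernel. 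Now the columns $m_0,\dots,n-1$ of $D$ are linearly independent precisely when no nonzero right-kernel vector is supported on those coordinates, i.e. when a kernel vector vanishing in its first $m_0$ coordinates must be zero; and this is exactly the injectivity of the projection of the kernel onto its first $m_0$ coordinates, which is the nonsingularity of the Moore matrix just used.

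For the rows I would pass to the adjoint polynomial $f^{*}(x)=\sum_i a_i^{\sigma^{-i}}x^{\sigma^{-i}}$, for which a short index computation yields $D(f^{*})=D(f)^{T}$, while $\dim_{\F_q}\ker f^{*}=\dim_{\F_q}\ker f=m_0$ because transposition preserves rank. The independence of rows $0,\dots,n-m_0-1$ of $D$ is the independence of the corresponding columns of $D(f)^{T}=D(f^{*})$, so running the column argument for $f^{*}$ settles it — reading now the last $m_0$ coordinates, where the relevant matrix $(y_i^{\sigma^{j}})_{\,n-m_0\le j\le n-1}$ is again a Moore matrix, nonsingular for an $\F_q$-basis $y_1,\dots,y_{m_0}$ of $\ker f^{*}$. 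I expect the main obstacle to be exactly this non-degeneracy input: one must confirm that the pertinent corner Moore (generalized Vandermonde) determinants are nonzero for $\F_q$-independent arguments for an arbitrary Galois generator $\sigma$, not merely for $\sigma=q$, since the entire argument for (a) rests on these determinants being nonvanishing.
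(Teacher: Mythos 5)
Your argument is essentially correct, and note for the comparison that the paper itself contains no proof of this statement: Theorem \ref{bence} is imported verbatim from \cite{Cs2019}, so what you have produced is a self-contained substitute for Csajb\'ok's proof rather than a variant of anything proved in this paper. Your top-level reduction is sound: for a matrix of rank $r$, an $r\times r$ submatrix $A[I,J]$ is nonsingular if and only if the rows $I$ and the columns $J$ of $A$ are each independent (the nontrivial direction follows since every row of $A$ is a combination of the rows in $I$, giving $A[\,\cdot\,,J]=E\,A[I,J]$ with $E$ of full column rank $r$), and your assertions (a) and (b) do yield both implications of the theorem, with (b) being the trivial rank bound. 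The computation $(Du^{(x)})_i=f(x)^{\sigma^i}$ is right for the convention $D_{ij}=a_{(j-i)\bmod n}^{\sigma^i}$, the adjoint identity $D(f^{*})=D(f)^{T}$ checks out by the index calculation $\hat a_\ell=a_{(n-\ell)\bmod n}^{\sigma^{\ell}}$, and you correctly notice that on the adjoint side the projection is onto the \emph{last} $m_0$ coordinates rather than the first.

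The ingredient you flag at the end --- nonvanishing of the corner Moore determinants for an arbitrary generator $\sigma$ --- is genuinely needed, but it closes immediately with Theorem \ref{Gow}, which the paper already quotes: a nontrivial $\F_{q^n}$-dependence among the columns of $(x_i^{\sigma^{j}})_{0\le j\le m_0-1}$ would give a nonzero $\sigma$-polynomial of $\sigma$-degree at most $m_0-1$ vanishing on the $m_0$-dimensional $\F_q$-span of the $x_i$, contradicting Theorem \ref{Gow}; and the corner matrix $(y_i^{\sigma^{j}})_{n-m_0\le j\le n-1}$ is precisely the Moore matrix of $z_i=y_i^{\sigma^{n-m_0}}$, where the automorphism $\sigma^{n-m_0}$ preserves $\F_q$-independence. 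One further input you use silently for general $\sigma$ is the rank identity $\dim_{\F_q}\ker f=n-\mathrm{rk}\,D(f)$, which the paper quotes only for $\sigma=q$; it transfers because, writing $\sigma\colon x\mapsto x^{q^s}$ with $\gcd(s,n)=1$, the $\sigma$-Dickson matrix of $f$ is the $q$-Dickson matrix of $f$ with rows and columns simultaneously permuted by $i\mapsto si\bmod n$. (Beware that this permutation does \emph{not} respect the corner minors $D_m(f)$, so it does not make the whole theorem a formal corollary of the $q$-case --- but it does give the rank identity, which is all you need.) With these two standard facts supplied, your proof is complete, and it runs on the same underlying mechanism as Csajb\'ok's original treatment: the right kernel of $D(f)$ spanned by the conjugate vectors of kernel elements, combined with Moore-minor nondegeneracy.
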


In \cite{teoremone}, jointly with Csajb\'ok and Marino, we prove the following characterization of $\sigma$-polynomials with maximum kernel.

\begin{theorem}\cite[Theorem 1.2]{teoremone}\label{maxker}
Consider
\[f(x)=a_0x+a_1x^{\sigma}+\cdots+a_{k-1}x^{\sigma^{k-1}}-x^{\sigma^k},\]
Then $f(x)$ is of maximum kernel if and only if the matrix
\[C_f C_f^{\sigma} \cdot \ldots \cdot C_f^{\sigma^{n-1}}=I_k,\]
where $C_f$ is the companion matrix of $f$, $C_f^{\sigma^i}$ is the matrix obtained from $C_f$ by applying to each of its entries the automorphism $x\mapsto x^{\sigma^i}$ and $I_k$ is the identity matrix of order $k$.
\end{theorem}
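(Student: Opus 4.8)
The plan is to translate the root-counting problem for $f$ into a fixed-point problem for a $\sigma$-semilinear map built from $C_f$, and then to invoke the elementary theory of such maps over $\F_{q^n}$. First I would normalize as in the statement, so that $a_k=-1$ and the entries $-a_i/a_k$ of $C_f$ are simply the $a_i$; in particular the last column of $C_f$ is $(a_0,\dots,a_{k-1})^{\top}$. I would dispose of the degenerate case $a_0=0$ at once: then $f(x)=g(x^\sigma)$ with $g(y)=a_1y+a_2y^\sigma+\cdots+a_{k-1}y^{\sigma^{k-2}}-y^{\sigma^{k-1}}$ of $\sigma$-degree $k-1$, so since $x\mapsto x^\sigma$ is a bijection, $\dim_{\F_q}\ker f=\dim_{\F_q}\ker g\le k-1<k$ by Theorem~\ref{Gow}; meanwhile $\det C_f=0$ forces $\det B=\N_{\F_{q^n}/\F_q}(\det C_f)=0\neq 1$, so $B\neq I_k$. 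Thus both sides of the claimed equivalence fail, and we may assume $a_0\neq 0$, i.e. $C_f\in\mathrm{GL}_k(\F_{q^n})$.

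To each $x\in\F_{q^n}$ I would associate the state vector $\mathbf v(x)=(x,x^\sigma,\dots,x^{\sigma^{k-1}})^{\top}$. A direct check from the shape of $C_f$ shows that $\mathbf v(x)^\sigma=C_f^{\top}\,\mathbf v(x)$ holds \emph{if and only if} $f(x)=0$: the first $k-1$ coordinates of this vector identity force $\mathbf v$ to be of the form $(x,x^\sigma,\dots,x^{\sigma^{k-1}})^{\top}$, while the last coordinate is precisely the equation $x^{\sigma^k}=a_0x+\cdots+a_{k-1}x^{\sigma^{k-1}}$. Hence $x\mapsto\mathbf v(x)$ is an $\F_q$-linear isomorphism from $\ker f$ onto the fixed space $W:=\{\mathbf v\in\F_{q^n}^k:\ \mathbf v^\sigma=C_f^{\top}\mathbf v\}$, so that $\dim_{\F_q}\ker f=\dim_{\F_q}W$. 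Writing this as $W=\mathrm{Fix}(h)$ for the $\sigma$-semilinear bijection $h(\mathbf w)=(C_f^{\top})^{-1}\mathbf w^\sigma$, a short induction and $\sigma^n=\mathrm{id}$ give $h^n=A^{-1}$ as an $\F_{q^n}$-linear map, where $A=(C_f^{\top})^{\sigma^{n-1}}\cdots(C_f^{\top})^{\sigma}(C_f^{\top})$. Since $A^{\top}=C_fC_f^{\sigma}\cdots C_f^{\sigma^{n-1}}=B$, we have $A=B^{\top}$, and therefore $h^n=\mathrm{id}\iff A=I_k\iff B=I_k$. Moreover every $\mathbf v\in W$ is fixed by $h$, hence by $h^n=A^{-1}$, so $W\subseteq\ker(A-I_k)$ over $\F_{q^n}$.

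For the forward direction, I would assume $f$ has maximum kernel, i.e. $\dim_{\F_q}W=k$. The key elementary fact is that $\F_q$-linearly independent fixed vectors of a $\sigma$-semilinear map are automatically $\F_{q^n}$-linearly independent: from a shortest nontrivial $\F_{q^n}$-relation among such vectors, applying $h$ and subtracting yields a strictly shorter relation whose coefficients lie in $\F_q$ (the semilinear analogue of Artin's independence of characters), a contradiction. Consequently an $\F_q$-basis of $W$ is an $\F_{q^n}$-basis of $\F_{q^n}^k$; since all its members lie in $\ker(A-I_k)$, the linear map $A$ fixes a full basis, so $A=I_k$ and hence $B=I_k$.

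Conversely, if $B=I_k$ then $h^n=\mathrm{id}$, so $h$ defines a semilinear action of $\mathrm{Gal}(\F_{q^n}\colon\F_q)=\langle\sigma\rangle$ on $\F_{q^n}^k$. By the multiplicative Hilbert~90 for $\mathrm{GL}_k$ over finite fields (equivalently Lang's theorem, or Galois descent), there is $P\in\mathrm{GL}_k(\F_{q^n})$ conjugating $h$ to the standard semilinear map $\mathbf v\mapsto\mathbf v^\sigma$; hence $W=\mathrm{Fix}(h)=P^{-1}\F_q^k$ has $\F_q$-dimension exactly $k$, so $f$ has maximum kernel. The main obstacle will be precisely this converse lower bound: unlike the forward implication, it asserts the \emph{existence} of $q^k$ roots and cannot be read off from the recurrence $A\mathbf v=\mathbf v$ alone, but rests on the vanishing of $H^1(\langle\sigma\rangle,\mathrm{GL}_k)$ over $\F_q$. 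The only secondary care needed is the transpose/ordering bookkeeping that identifies $A=B^{\top}$, together with the degenerate case $a_0=0$ handled at the outset.
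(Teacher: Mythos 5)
Your proposal is correct and takes essentially the same approach as the proof in \cite{teoremone} that the paper cites for this statement: roots of $f$ are identified with fixed points of the $\sigma$-semilinear map attached to the companion matrix, the forward implication uses that $\F_q$-independent fixed vectors of a semilinear map are $\F_{q^n}$-independent, and the converse rests on the Dempwolff--Fisher--Herman fixed-point theorem (Theorem~\ref{fixsemi}), for which your appeal to Hilbert~90/Lang is simply the standard proof. Your extra care with the degenerate case $a_0=0$ and the transpose bookkeeping $A=B^{\top}$ is sound and does not change the substance of the argument.
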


McGuire and Sheekey in \cite{McGuireSheekey} generalize the previous result as follows.

\begin{theorem}\cite[Theorem 6]{McGuireSheekey}\label{rk}
Consider
\[f(x)=a_0x+a_1x^{\sigma}+\cdots+a_{k-1}x^{\sigma^{k-1}}+a_kx^{\sigma^k} \in \tilde{\mathcal{L}}_{n,q,\sigma}.\]
Then
	\[\dim_{\F_q} \ker\,f = n-\mathrm{rk}\, E_1,\]
where $E_1=C_f C_f^{\sigma} \cdots C_f^{\sigma^{n-1}}-I_k$.
\end{theorem}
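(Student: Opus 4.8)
The plan is to realise $\ker f$ as the space of $\sigma$-invariant vectors inside the $1$-eigenspace of a matrix built from $C_f$, and then to compute the dimension of that eigenspace by Galois descent. Set $A:=C_f^{\top}$, which is the ordinary companion matrix of the linear recurrence read off from $f(x)=0$, namely $x^{\sigma^k}=-a_k^{-1}\sum_{j=0}^{k-1}a_jx^{\sigma^j}$. For a fixed $x$ put $w_i:=x^{\sigma^i}$ and $V_i:=(w_i,w_{i+1},\dots,w_{i+k-1})^{\top}\in\F_{q^n}^k$. Applying $\sigma^i$ to $f(x)=0$ gives $a_k^{\sigma^i}w_{i+k}+\sum_{j<k}a_j^{\sigma^i}w_{i+j}=0$, which is exactly the relation $V_{i+1}=A^{\sigma^i}V_i$. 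Iterating this $n$ times and using $w_{i+n}=w_i$ yields $V_0=V_n=BV_0$, where $B:=A^{\sigma^{n-1}}\cdots A^{\sigma}A$; hence $V_0(x)\in\ker(B-I_k)$ for every $x\in\ker f$. Because transposition commutes with the entrywise action of $\sigma$, one has $C_fC_f^{\sigma}\cdots C_f^{\sigma^{n-1}}=B^{\top}$, so $E_1=(B-I_k)^{\top}$; consequently $\mathrm{rk}\,E_1=\mathrm{rk}(B-I_k)$ and, by rank--nullity for the $k\times k$ matrix $E_1$, $\dim_{\F_{q^n}}\ker(B-I_k)=k-\mathrm{rk}\,E_1$.

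The delicate point is that $\ker(B-I_k)$ is an $\F_{q^n}$-space which is in general strictly larger than $\{V_0(x):x\in\ker f\}$, so the theorem amounts to matching an $\F_q$-dimension with an $\F_{q^n}$-dimension. To handle this I would reinterpret $\ker(B-I_k)$ as the space of $n$-periodic solutions $(u_i)_{i\in\mathbb{Z}/n}$ of the twisted recurrence $a_k^{\sigma^i}u_{i+k}+\sum_{j<k}a_j^{\sigma^i}u_{i+j}=0$: a solution is determined by its initial window $(u_0,\dots,u_{k-1})$, and the condition $V_n=V_0$ propagates to full periodicity. On this space define $\Theta$ by $\Theta(u)_i:=u_{i-1}^{\sigma}$. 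A short computation shows that $\Theta$ sends solutions to solutions, is $\sigma$-semilinear (so $\Theta(\lambda u)=\lambda^{\sigma}\Theta(u)$), and satisfies $\Theta^n=\mathrm{id}$ since $\Theta^n(u)_i=u_{i-n}^{\sigma^n}=u_i$; in particular $\Theta$ is invertible. Its fixed vectors are precisely the sequences with $u_i=u_{i-1}^{\sigma}$, i.e. $u_i=u_0^{\sigma^i}$, and then the recurrence at $i=0$ says exactly $f(u_0)=0$. Thus $u\mapsto u_0$ is an $\F_q$-linear isomorphism $\mathrm{Fix}(\Theta)\xrightarrow{\sim}\ker f$.

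It then remains to compare $\dim_{\F_q}\mathrm{Fix}(\Theta)$ with $\dim_{\F_{q^n}}\ker(B-I_k)$, and here I would invoke the descent theorem for semilinear maps (Speiser's theorem, a form of additive Hilbert~90): a $\sigma$-semilinear automorphism $\Theta$ of a finite-dimensional $\F_{q^n}$-space $W$ with $\Theta^n=\mathrm{id}$ admits an $\F_{q^n}$-basis of fixed vectors, whence $\mathrm{Fix}(\Theta)$ is an $\F_q$-form of $W$ and $\dim_{\F_q}\mathrm{Fix}(\Theta)=\dim_{\F_{q^n}}W$. Taking $W=\ker(B-I_k)$ gives $\dim_{\F_q}\ker f=\dim_{\F_{q^n}}\ker(B-I_k)=k-\mathrm{rk}\,E_1$. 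One inequality can also be seen directly: by Theorem~\ref{Gow} an $\F_q$-basis $x_1,\dots,x_m$ of $\ker f$ has $m\le k$, and the vectors $V_0(x_1),\dots,V_0(x_m)$ are $\F_{q^n}$-independent because the Moore (generalised Vandermonde) matrix $(x_i^{\sigma^j})$ is nonsingular, so $\dim_{\F_{q^n}}\ker(B-I_k)\ge\dim_{\F_q}\ker f$. The reverse inequality---equivalently the descent step, asserting that $B$ has no eigenvectors for the eigenvalue $1$ beyond those forced by genuine roots of $f$---is the main obstacle, and the semilinear fixed-space argument is exactly what closes it.
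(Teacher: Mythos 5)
Your proof is correct, but note that what you actually establish is $\dim_{\F_q}\ker f = k-\mathrm{rk}\,E_1$, not the ``$n-\mathrm{rk}\,E_1$'' printed in the statement. That is the right outcome: the printed formula is a transcription slip (the original \cite[Theorem 6]{McGuireSheekey} has the $\sigma$-degree, here $k$, in place of $n$), as one sees from $f(x)=x^{\sigma}-\alpha x$ with $\N_{q^n/q}(\alpha)\neq 1$: then $\ker f=\{0\}$, $k=1$, $E_1=\left(\N_{q^n/q}(\alpha)-1\right)$ has rank $1$, and $n-1\neq 0$ for $n\geq 2$. So you have silently proved the corrected statement, which is the one the paper intends to quote.

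Since the paper cites this theorem without proof, the relevant comparison is with the technique the paper does use, namely in the proof of Theorem \ref{main}: there, as in your argument, membership in $\ker f$ is converted into fixed vectors of an invertible $\sigma$-semilinear map whose $n$-th power is the identity, and the Dempwolff--Fisher--Herman descent (Theorem \ref{fixsemi}; your ``Speiser/additive Hilbert 90'' is the same statement) equates the $\F_q$-dimension of the fixed space with the $\F_{q^n}$-dimension of the ambient space, here $\ker(B-I_k)$ of dimension $k-\mathrm{rk}\,E_1$. Your packaging via $n$-periodic solutions of the twisted recurrence with the shift $\Theta(u)_i=u_{i-1}^{\sigma}$ is a nice variant of the more common formulation $v\mapsto A^{-1}v^{\sigma}$ on $\F_{q^n}^k$: it makes $\Theta^n=\mathrm{id}$ (hence invertibility of $\Theta$) immediate and never requires $C_f$, equivalently $a_0$, to be invertible. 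Two small points deserve a line if written up: the gluing of the window relations at $i=0,\dots,n-1$ with $V_n=V_0$ into a function on $\mathbb{Z}/n$ uses $k\leq n$; and Theorem \ref{fixsemi} wants $\Theta$ of order exactly $n$, which holds on any nonzero $W$ because $\Theta^d=\mathrm{id}$ with $W\neq\{0\}$ forces $\sigma^d=\mathrm{id}$ on the scalars, hence $n\mid d$ --- the same observation the paper makes for $H^*$ in the proof of Theorem \ref{main}. Your closing Moore-matrix inequality is correct but redundant once the descent step is in place.
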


\medskip

Our aim is to prove similar results for linearized polynomials of type
\begin{equation}\label{form}
ax+b_0x^{\sigma}+b_1x^{\sigma q^n}+b_2x^{\sigma q^{2n}}+\ldots+b_{t-1}x^{\sigma q^{n(t-1)}} \in {\mathcal{L}}_{nt,q}
\end{equation}
with $a \neq 0$ and $\sigma\colon x \mapsto x^{q^s}$ a generator of the Galois group $\mathrm{Gal}(\F_{q^n}\colon\F_q)$, where $x^{\sigma q^{ni}}:=x^{q^{s+ni}}$.
More precisely, our main results are the following.

\smallskip


\begin{theorem}\label{main}
Let
\[ f(x)=-x+b_0x^{\sigma}+b_1x^{\sigma q^n}+b_2x^{\sigma q^{2n}}+\ldots+b_{t-1}x^{\sigma q^{n(t-1)}} \in \mathcal{L}_{nt,q}, \]
where $\sigma \in \mathrm{Aut}(\F_{q^{nt}})$ such that $\sigma|_{\F_{q^n}}\colon\F_{q^n}\rightarrow\F_{q^n}$ has order $n$.
Let $G(x)$ be the $q^n$-polynomial such that $f(x)=(G\circ\sigma)(x)-x$, i.e. $G(x)=\sum_{i=0}^{t-1} b_ix^{q^{ni}}$.
Then
\begin{enumerate}
  \item $\dim_{\F_q} \ker\,f \leq t$;
  \item $\ker f=\{0\}$ if and only if $\ker((G\circ\sigma)^n-\mathrm{id})=\{0\}$(\footnote{We denote by $H^n$ the composition $H\circ H\circ \ldots \circ H$ $n$ times.}).\\
  \noindent More generally,
  \item $\dim_{\F_q}\ker f=\dim_{\F_{q^n}}\ker ((G\circ\sigma)^n-\mathrm{id})$.
\end{enumerate}
In particular, $\dim_{\F_q}\ker f=t$ if and only if $(G\circ \sigma)^n=\mathrm{id}$.
\end{theorem}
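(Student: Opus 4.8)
The plan is to recast $f$ through the semilinear map $H:=G\circ\sigma$ and to reduce every assertion to the single dimension identity in part (3). First I would fix the structural picture that makes $G$ and $\sigma$ interact cleanly: viewing $\F_{q^{nt}}$ as a $t$-dimensional vector space over $\F_{q^n}$, the map $G$ is $\F_{q^n}$-linear (each exponent $q^{ni}$ fixes $\F_{q^n}$ elementwise), while $\sigma$ satisfies $(\lambda x)^\sigma=\lambda^\sigma x^\sigma$ for $\lambda\in\F_{q^n}$; hence $H=G\circ\sigma$ is $\sigma$-semilinear over $\F_{q^n}$, that is $H(\lambda v)=\lambda^\sigma H(v)$, and $f=H-\mathrm{id}$, so $\ker f=\mathrm{Fix}(H)$. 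Because $\sigma|_{\F_{q^n}}$ has order $n$, the $n$-fold composite $H^n=(G\circ\sigma)^n$ is $\F_{q^n}$-linear, so $H^n-\mathrm{id}$ is an $\F_{q^n}$-endomorphism of a $t$-dimensional space; this is exactly the object appearing on the right-hand side of (2) and (3).

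The heart of the argument is part (3). I would first record the easy inclusion $\mathrm{Fix}(H)\subseteq U$, where $U:=\ker(H^n-\mathrm{id})$, since $Hv=v$ forces $H^nv=v$; and that $U$ is $H$-invariant, because $H$ commutes with $H^n$ and hence preserves $\ker(H^n-\mathrm{id})$. This lets me replace $H$ by its restriction $H|_U$, a $\sigma$-semilinear bijection of the $\F_{q^n}$-space $U$ with $(H|_U)^n=\mathrm{id}_U$ and $\mathrm{Fix}(H|_U)=\mathrm{Fix}(H)=\ker f$. It therefore suffices to prove the descent statement: \emph{if $H$ is a $\sigma$-semilinear bijection of a $d$-dimensional $\F_{q^n}$-space with $H^n=\mathrm{id}$, then $\dim_{\F_q}\mathrm{Fix}(H)=d$.}

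For the descent statement I would argue the two inequalities separately. For the upper bound, any $\F_q$-linearly independent family of fixed vectors is $\F_{q^n}$-linearly independent: take a shortest nontrivial $\F_{q^n}$-relation among fixed vectors, apply $H$, and subtract to obtain a strictly shorter relation, forcing every coefficient into $\mathrm{Fix}(\sigma)=\F_q$ and contradicting $\F_q$-independence; applied to $U$ this already yields part (1), since $\dim_{\F_q}\mathrm{Fix}(H)\le d\le t$. For the lower bound I must exhibit enough fixed vectors, which is the finite-field case of Hilbert's Theorem~90 for semilinear operators. Concretely, one identifies the joint action of the $\F_{q^n}$-scalars and of $H$ on $U$ with a module over the crossed-product algebra $\F_{q^n}\langle T;\sigma\rangle/(T^n-1)\cong\mathrm{End}_{\F_q}(\F_{q^n})\cong M_n(\F_q)$, under which $T$ acts as $H$ and corresponds to $\sigma$; since this algebra is simple with natural module $\F_{q^n}$ on which $T-\mathrm{id}=\sigma-\mathrm{id}$ has $\F_q$-kernel $\mathrm{Fix}(\sigma)=\F_q$ of dimension $1$, the module $U$ (of $\F_q$-dimension $nd$) splits as $d$ copies of the natural module, whence $\dim_{\F_q}\ker(H-\mathrm{id})=d$. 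This lower bound is the main obstacle; alternatively one can manufacture the fixed vectors by a direct averaging/telescoping argument built from $\mathrm{id}+H+\cdots+H^{n-1}$.

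The remaining assertions then follow formally. Part (2) is the special case $\mathrm{Fix}(H)=\{0\}\iff U=\{0\}$ of (3). For the concluding clause, $H^n-\mathrm{id}$ is an $\F_{q^n}$-linear endomorphism of the $t$-dimensional space $\F_{q^{nt}}$, so its kernel has dimension exactly $t$ precisely when the endomorphism vanishes identically; combined with (3) this gives $\dim_{\F_q}\ker f=t$ if and only if $(G\circ\sigma)^n=\mathrm{id}$. Once the semilinear viewpoint is fixed, the reduction to the invariant subspace $U$ and the bookkeeping between $\F_q$- and $\F_{q^n}$-dimensions are routine.
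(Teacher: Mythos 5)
Your proposal is correct, and its skeleton coincides with the paper's: form $H=G\circ\sigma$, note $\ker f=\mathrm{Fix}(H)$, observe that $U=\ker(H^n-\mathrm{id})$ is an $H$-invariant $\F_{q^n}$-subspace containing $\mathrm{Fix}(H)$, restrict $H$ to $U$ where it becomes an invertible $\sigma$-semilinear map of order $n$, and conclude via a fixed-point dimension theorem; your shortest-relation argument for the upper bound is the same independence statement the paper proves by induction, and your derivation of the final clause is identical. The genuine difference is in the key lemma: where the paper invokes the Dempwolff--Fisher--Herman theorem (Theorem \ref{fixsemi}) as a black box, you reprove exactly that statement by identifying the joint action of $\F_{q^n}$ and $H$ on $U$ with a module over the crossed product $\F_{q^n}\langle T;\sigma\rangle/(T^n-1)\cong M_n(\F_q)$ and counting fixed vectors copy-by-copy in the natural module; this makes the proof self-contained and arguably more transparent structurally, at the cost of importing simple-algebra machinery. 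A second divergence: you obtain part (2) formally as the zero-dimensional case of part (3), which is logically sound, whereas the paper proves (2) directly via the telescoping factorization $H^n-\mathrm{id}=(H-\mathrm{id})\circ L$ with $L=H^{n-1}+\cdots+\mathrm{id}$, using Theorem \ref{Gow} to show $L(\lambda y)\neq 0$ for some $\lambda\in\F_{q^n}$. That constructive detour is not redundant in the paper: it is precisely what later yields the explicit root-finding method of Theorem \ref{findroots} ($\ker f=L(E_1(H^n))$), a byproduct your purely formal deduction does not supply, though your closing remark about an averaging argument built from $\mathrm{id}+H+\cdots+H^{n-1}$ gestures at exactly this.
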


Using this result, in Section \ref{sec:matrix} we prove the following theorem that, similarly to Theorems \ref{maxker} and \ref{rk}, characterizes the number of roots of a linearized polynomial by giving relations on their coefficients and using a much smaller matrix than those used for the general case.

\begin{theorem}\label{main1}
Let
\[ f(x)=-x+b_0x^{\sigma}+b_1x^{\sigma q^n}+b_2x^{\sigma q^{2n}}+\ldots+b_{t-1}x^{\sigma q^{n(t-1)}} \in \mathcal{L}_{nt,q}, \]
where $\sigma$ is a generator of $\mathrm{Gal}(\F_{q^n}\colon\F_q)$. Then
$\dim_{\F_q}\ker f=h$ if and only if
\begin{equation}\label{matrixcondition}
\mathrm{rk}(D^{\tau^{n-1}} \cdot D^{\tau^{n-2}} \cdot \ldots \cdot D-J^s)=t-h,
\end{equation}
where $\tau=\sigma^{-1}$, $J:=\left(\begin{array}{cccccccccccc}
	0 & 1 & 0 & \cdots & 0 & 0 \\
    0 & 0 & 1 & \cdots & 0 & 0 \\
	\vdots & \vdots & & & \vdots & \vdots \\
	0 & 0 & 0 & \cdots & 0 & 1 \\
	1 & 0 & 0&  \cdots & 0 & 0
	\end{array}\right)\in \F_{q^n}^{t\times t}$,
\[ D=D_{red}(f):=\left(\begin{array}{cccccccccccc}
	b_0 & b_1 & \cdots & b_{t-1} \\
	b_{t-1}^{q^n} & b_0^{q^n} & \cdots & b_{t-2}^{q^n} \\
	\vdots & \vdots & & \vdots &  \\
	b_{1}^{q^{(t-1)n}} & b_{2}^{q^{(t-1)n}} & \cdots & b_0^{q^{n(t-1)}}
	\end{array}\right)\in \F_{q^n}^{t\times t} \]
and $s$ is the minimum integer such that $1 \leq s \leq nt$ and $\tau\colon x \mapsto x^{q^{s}}$.
\end{theorem}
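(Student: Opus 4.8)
The plan is to combine Theorem~\ref{main} with the standard dictionary between $q^n$-polynomials and their reduced Dickson matrices. By part (3) of Theorem~\ref{main}, $\dim_{\F_q}\ker f=\dim_{\F_{q^n}}\ker((G\circ\sigma)^n-\mathrm{id})$, with $G(x)=\sum_{i=0}^{t-1}b_ix^{q^{ni}}$. The first point to record is that $G\circ\sigma$ is $\sigma$-semilinear over $\F_{q^n}$: since $G$ is $\F_{q^n}$-linear and $\sigma$ acts on scalars $\lambda\in\F_{q^n}$, one checks $(G\circ\sigma)(\lambda x)=\sigma(\lambda)\,(G\circ\sigma)(x)$. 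Hence $(G\circ\sigma)^n$ pulls scalars out through $\sigma^n=\mathrm{id}|_{\F_{q^n}}$ and is genuinely $\F_{q^n}$-linear, so it is represented by a $q^n$-polynomial and possesses a reduced Dickson matrix of size $t\times t$.

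Next I would compute this $q^n$-polynomial explicitly. Writing $G^{[\sigma^k]}$ for the $q^n$-polynomial whose coefficients are the $\sigma^k$-images of those of $G$ (a $q^n$-polynomial again, since the twist changes coefficients but not the exponents $q^{ni}$), the commutation rule $\sigma\circ G=G^{[\sigma]}\circ\sigma$, a one-line check on monomials, iterates to $(G\circ\sigma)^n=G\circ G^{[\sigma]}\circ\cdots\circ G^{[\sigma^{n-1}]}\circ\sigma^n$. Now $\sigma^n$ is a power of the $q^n$-Frobenius $\rho\colon x\mapsto x^{q^n}$ of $\F_{q^{nt}}$: from $\tau=\sigma^{-1}\colon x\mapsto x^{q^s}$ one gets $\sigma^n=\rho^{-s}$. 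Under the analogue of \cite[Proposition 4.4]{wl} with base field $\F_{q^n}$, the map $H\mapsto D_{red}(H)$ is a ring isomorphism onto the twisted-circulant $t\times t$ matrices that sends composition to product, the identity to $I$, and the entrywise $\sigma^k$-twist of coefficients to the entrywise $\sigma^k$-twist of the matrix; moreover $\dim_{\F_{q^n}}\ker H=t-\mathrm{rk}\,D_{red}(H)$. Since $D_{red}(G)=D$ and $D_{red}(\rho)=J$, this yields $D_{red}((G\circ\sigma)^n-\mathrm{id})=P\,J^{-s}-I$, where $P=D\,D^{\sigma}\cdots D^{\sigma^{n-1}}$ and $D^{\sigma^k}$ is the entrywise $\sigma^k$-image of $D$.

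It remains to turn $\mathrm{rk}(PJ^{-s}-I)$ into $\mathrm{rk}(Q-J^s)$ with $Q=D^{\tau^{n-1}}\cdots D^{\tau}D$. First, right-multiplying by the invertible permutation $J^{s}$ gives $\mathrm{rk}(PJ^{-s}-I)=\mathrm{rk}(P-J^{s})$. Second, I claim $P$ and $Q$ differ by a single entrywise Frobenius twist. Indeed, writing $Q=D^{\sigma^{-(n-1)}}\cdots D^{\sigma^{-1}}D$ and applying $\sigma^{n-1}$ to every entry, which commutes with the matrix product, sends each factor $D^{\tau^{j}}=D^{\sigma^{-j}}$ to $D^{\sigma^{\,n-1-j}}$, hence sends $Q$ to $D\,D^{\sigma}\cdots D^{\sigma^{n-1}}=P$; that is, $P=Q^{\sigma^{n-1}}$. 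Because the entries of $J^{s}$ lie in $\F_q$ and are fixed by $\sigma^{n-1}$, we obtain $P-J^{s}=(Q-J^{s})^{\sigma^{n-1}}$, and a field automorphism preserves rank, so $\mathrm{rk}(P-J^{s})=\mathrm{rk}(Q-J^{s})$. Chaining these equalities gives $\dim_{\F_q}\ker f=t-\mathrm{rk}(Q-J^{s})$, which is precisely \eqref{matrixcondition}.

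The conceptual steps are short; the main obstacle is the index bookkeeping in the last paragraph. One must fix a single convention for $D_{red}$ and for the vector $(x,x^{q^n},\ldots,x^{q^{(t-1)n}})$, verify $D_{red}(\rho)=J$ and $D_{red}(G^{[\sigma^k]})=D^{\sigma^k}$ against it, and keep track that $s$ is defined through $\tau=\sigma^{-1}$, so that $\sigma^n=\rho^{-s}$ produces $J^{-s}$, which becomes the $-J^s$ of the statement only after the right-multiplication by $J^s$. The twist identity $P=Q^{\sigma^{n-1}}$, rather than a naive cyclic rotation of the factors, is the crux that reconciles the order of the product in the statement with the order produced by pushing all the $\sigma$'s to the right; I would sanity-check the whole chain on the case $t=1$, where it reduces to the statement that $\dim_{\F_q}\ker f=1$ exactly when the norm of $b_0$ equals $1$.
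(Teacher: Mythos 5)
Your proposal is correct and takes essentially the same route as the paper's proof: part 3 of Theorem \ref{main}, the semilinear commutation rule decomposing $H^n=(G\circ\sigma)^n$ into twisted copies of $G$ composed with a residual power of the $q^n$-Frobenius, and the Wu--Liu Dickson-matrix dictionary over $\F_{q^n}$ (Remark \ref{DicksonAlgebra}). The only difference is bookkeeping: the paper conjugates $H^n-\mathrm{id}$ by $\sigma$ so that $D^{\tau^{n-1}}\cdots D-J^s$ appears directly as the Dickson matrix of $\overline{G}-\tau^n$, whereas you push the $\sigma$'s to the right, obtain $PJ^{-s}-I$ with $P=D\,D^{\sigma}\cdots D^{\sigma^{n-1}}$, and reconcile via right multiplication by $J^s$ and the entrywise twist $P=Q^{\sigma^{n-1}}$ --- both steps are valid.
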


We call $D_{red}(f)$ the \emph{restricted Dickson matrix} associated with $f$.
Note that $D_{red}(f)$ corresponds to the Dickson matrix of the $q^n$-polynomial $G$ defined in Theorem \ref{main} and it is a submatrix of $D(f)$ of order $t$.

In Section \ref{sec:findingroots} we show a method to find the roots of polynomials in these family and in Section \ref{sec:trin} we apply our results to trinomials also investigated in \cite{McGuireMueller}.
We also show explicit calculations for some fixed parameters in Section \ref{sec:criteria}.
This class of polynomials is quite large and contains properly linearized polynomials appearing in \cite{BZZ,CMPZ,CsMZ2018,LP2001,Sheekey2016,ZZ} and defining important examples of MRD-codes and scattered linear sets, see Section \ref{sec:application}. In particular, in Theorem \ref{thm:7.3} we completely determine the scattered linear sets appearing in \cite{CMPZ} when $n=6$.

\section{Proof of Theorem \ref{main}}

In this section we will investigate the number of roots of a $q$-polynomial of the following form
\[ f(x)=ax+b_0x^{\sigma}+b_1x^{\sigma q^n}+b_2x^{\sigma q^{2n}}+\ldots+b_{t-1}x^{\sigma q^{n(t-1)}} \in \F_{q^{nt}}[x] \]
with $a \neq 0$ and $\sigma \in \mathrm{Aut}(\F_{q^{nt}})$ such that $\sigma|_{\F_{q^n}}\colon\F_{q^n}\rightarrow\F_{q^n}$ has order $n$.
We may assume that $a=-1$, since $a \neq 0$ and $\ker f = \ker (-a^{-1} f)$.
Note that $f$ is neither a $q^n$-polynomial nor a $\sigma$-polynomial.

\smallskip

To prove our main theorem (Theorem \ref{main}), we will need the following result by Dempwolff, Fisher and Herman from \cite{DFH}, see also \cite[Theorem 2.2]{teoremone}.

\begin{theorem}\label{fixsemi}
Let $T$ be an invertible semilinear transformation of $\mathbb{V}=V(t,q^m)$ of order $m$, with companion automorphism $\tau\in \mathrm{Gal}(\F_{q^m}\colon\F_q)$.
Then $\mathrm{Fix}(T)=\{\mathbf{v} \in \mathbb{V} \colon T(\mathbf{v})=\mathbf{v}\}$ is a $t$-dimensional $\F_q$-subspace of $\mathbb{V}$ and $\langle \mathrm{Fix}(T) \rangle_{\F_{q^m}}=\mathbb{V}$.
\end{theorem}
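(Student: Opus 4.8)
The plan is to give a self-contained argument establishing the two assertions separately, relying only on the $\tau$-semilinearity of $T$, the relation $T^m=\mathrm{id}$, and the fact that $\tau$ generates $\mathrm{Gal}(\F_{q^m}\colon\F_q)$, so that its fixed field is exactly $\F_q$. That $\mathrm{Fix}(T)$ is an $\F_q$-subspace is immediate: $T$ is additive, and for $c\in\F_q$ and $\mathbf{v}\in\mathrm{Fix}(T)$ one has $T(c\mathbf{v})=c^{\tau}T(\mathbf{v})=c\mathbf{v}$, since $c^\tau=c$. The content is therefore the determination of its $\F_q$-dimension together with the spanning property, which I would obtain from an \emph{independence} step and a \emph{spanning} step.

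For the independence step I would show that any $\F_q$-linearly independent family in $\mathrm{Fix}(T)$ stays $\F_{q^m}$-linearly independent. Given $\F_q$-independent $\mathbf{v}_1,\dots,\mathbf{v}_k\in\mathrm{Fix}(T)$, suppose they admit a nontrivial $\F_{q^m}$-dependence, and choose one, $\sum_{l\in S}c_l\mathbf{v}_l=0$ with all $c_l\neq 0$, whose support $S$ is minimal; normalise so that some $c_{l_0}=1$. Applying $T$ and using $T(\mathbf{v}_l)=\mathbf{v}_l$ together with semilinearity gives $\sum_{l\in S} c_l^{\tau}\mathbf{v}_l=0$; subtracting the two relations kills the term at $l_0$ and produces a strictly shorter dependence, which by minimality must be trivial. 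Hence $c_l=c_l^{\tau}$ for all $l$, i.e. every $c_l$ lies in the fixed field $\F_q$ of $\tau$, contradicting $\F_q$-independence. This yields $\dim_{\F_q}\mathrm{Fix}(T)\le t$ and, crucially, that any $\F_q$-basis of $\mathrm{Fix}(T)$ is an $\F_{q^m}$-linearly independent subset of $\mathbb{V}$.

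For the spanning step I would use an averaging construction. Fix $\mathbf{v}\in\mathbb{V}$ and, for $\lambda\in\F_{q^m}$, set $\mathbf{w}_\lambda=\sum_{i=0}^{m-1}T^i(\lambda\mathbf{v})=\sum_{i=0}^{m-1}\lambda^{\tau^i}T^i(\mathbf{v})$. Because $T^m=\mathrm{id}$, a shift of the summation index shows $T(\mathbf{w}_\lambda)=\mathbf{w}_\lambda$, so every $\mathbf{w}_\lambda$ lies in $\mathrm{Fix}(T)$. Viewing the $T^i(\mathbf{v})$ as unknowns in $\mathbb{V}$, the equations $\mathbf{w}_{\lambda_j}=\sum_i \lambda_j^{\tau^i}T^i(\mathbf{v})$ form a linear system with coefficient matrix $(\lambda_j^{\tau^i})_{0\le i,j\le m-1}$. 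By Artin's theorem on the $\F_{q^m}$-linear independence of the distinct automorphisms $\mathrm{id},\tau,\dots,\tau^{m-1}$, one can choose $\lambda_0,\dots,\lambda_{m-1}$ making this matrix invertible; inverting the system expresses each $T^i(\mathbf{v})$, in particular $\mathbf{v}=T^0(\mathbf{v})$, as an $\F_{q^m}$-combination of the $\mathbf{w}_{\lambda_j}\in\mathrm{Fix}(T)$. Since $\mathbf{v}$ was arbitrary, $\langle\mathrm{Fix}(T)\rangle_{\F_{q^m}}=\mathbb{V}$.

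Combining the two steps finishes the proof: an $\F_q$-basis $\mathcal{B}$ of $\mathrm{Fix}(T)$ is $\F_{q^m}$-independent by the first step and $\F_{q^m}$-spanning by the second (its $\F_{q^m}$-span contains $\mathrm{Fix}(T)$, hence all of $\mathbb{V}$), so $\mathcal{B}$ is an $\F_{q^m}$-basis of $\mathbb{V}$ and thus $\dim_{\F_q}\mathrm{Fix}(T)=|\mathcal{B}|=t$. The step I expect to be the main obstacle is the spanning argument, specifically guaranteeing scalars $\lambda_j$ for which the twisted (Moore-type) matrix $(\lambda_j^{\tau^i})$ is nonsingular; this is exactly where the hypothesis that $\tau$ has full order $m$ enters, and I would justify it via Artin independence, equivalently by invoking a normal basis of $\F_{q^m}$ over $\F_q$. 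As an alternative to the averaging construction, one could instead invoke the matrix form of Hilbert's Theorem~90: writing $T(\mathbf{v})=A\mathbf{v}^\tau$ in coordinates, the condition $T^m=\mathrm{id}$ reads $A A^\tau\cdots A^{\tau^{m-1}}=I_t$, which yields $A=B(B^\tau)^{-1}$ for some $B\in\mathrm{GL}(t,q^m)$, whence $\mathrm{Fix}(T)=B\cdot\F_q^t$ gives both claims at once.
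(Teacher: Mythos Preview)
The paper does not give its own proof of this theorem; it is quoted as a result of Dempwolff, Fisher and Herman \cite{DFH} (see also \cite[Theorem~2.2]{teoremone}), so there is no in-paper argument to compare against. Your self-contained proof is correct.

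It is worth noting that both of your key ingredients already appear, in disguise, in the paper's proof of Theorem~\ref{main}. Your independence step (an $\F_q$-independent family in $\mathrm{Fix}(T)$ stays $\F_{q^m}$-independent, via a shortest-dependence argument) is exactly the claim the authors sketch ``by induction'' in part~1 of that proof. Your averaging construction $\mathbf{w}_\lambda=\sum_i T^i(\lambda\mathbf{v})=\sum_i \lambda^{\tau^i}T^i(\mathbf{v})$, together with the Artin/Moore argument that these span, is precisely the map $L(\lambda y)=\sum_i \lambda^{\sigma^i}H^i(y)$ and the appeal to Theorem~\ref{Gow} in part~2. So your proof reconstructs the cited black box using the very tools the paper already has on the table; the alternative via matrix Hilbert~90 that you mention is essentially the route taken in \cite{DFH}.

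One small caveat: the statement as printed does not explicitly say that $\tau$ generates $\mathrm{Gal}(\F_{q^m}\colon\F_q)$, but you (rightly) assume it, and the conclusion is false without it (take $\tau=\mathrm{id}$ and $T=-\mathrm{id}$ on $V(1,q^2)$ for $q$ odd). This hypothesis is implicit in the cited source and holds in every application the paper makes of the theorem.
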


%

\medskip

\emph{Proof of Theorem \ref{main}}\\
\emph{1.} Let $G(x)=\sum_{i=0}^{t-1} b_ix^{q^{ni}}$ and $H=G\circ \sigma$. Note that $H$ is an $\F_{q^n}$-semilinear transformation of $\F_{q^{nt}}$ with companion automorphism $\sigma$.
Since $\sigma|_{\F_{q^n}}\colon\F_{q^n}\rightarrow\F_{q^n}$ has order $n$, it follows that $H^n$ is an $\F_{q^n}$-linear transformation of $\F_{q^{nt}}$.
Also, $E_1(H)=\{y \in \F_{q^{nt}} \colon H(y)=y\}$ coincides with the kernel of $f$, the subspace $E_1(H^n)=\{y \in \F_{q^{nt}} \colon H^n(y)=y\}$ is an $\F_{q^n}$-subspace of $\F_{q^{nt}}$ and
\begin{equation}\label{A1(H)A1(Hn)}
E_1(H)\subseteq E_1(H^n).
\end{equation}
Since $H$ is an $\F_{q^n}$-semilinear transformation with companion automorphism $\sigma$ and for each $\lambda\in\F_{q^n}$ the condition $\lambda^\sigma=\lambda$ implies $\lambda \in \F_q$, by induction it is easy to see that if $y_1,\ldots,y_h\in E_1(H)$ are $\F_q$-independent then $y_1,\ldots,y_h$ are also $\F_{q^n}$-independent.
As a consequence we get the first point of the assertion, i.e. $\dim_{\F_q} \ker f \leq \dim_{\F_{q^n}} E_1(H^n)\leq t$.

\smallskip

\emph{2.} We have to prove that $\ker f \neq \{0\}$ if and only if there exists $y \in \F_{q^{nt}}^*$ such that $(G\circ\sigma)^n(y)=y$.
If $y \in \ker f$ with $y \neq 0$, then clearly $H^n(y)=y$, since $H(y)=y$.
Now, suppose that there exists $y\neq 0$ with $H^n(y)=y$.
Note that, we may write
\[ H^n-\mathrm{id}=(H-\mathrm{id})\circ(H^{n-1}+H^{n-2}+\ldots+\mathrm{id}). \]
Let $L=H^{n-1}+H^{n-2}+\ldots+\mathrm{id}$, which is an $\F_q$-linear transformation of $\F_{q^{nt}}$.
So,
\[0=(H^n-\mathrm{id})(y)=(H-\mathrm{id})(L(y)),\]
and hence $L(y)\in\ker(H-\mathrm{id})=\ker f$.
Furthermore, if $y \in \ker(H^n-\mathrm{id})$ then $\lambda y \in  \ker(H^n-\mathrm{id})$ for each $\lambda \in \F_{q^n}$, since $H^n-\mathrm{id}$ is $\F_{q^n}$-linear.
Hence, if $y \in \ker(H^n-\mathrm{id})$ then $L(\lambda y) \in \ker(H-\mathrm{id})=\ker f$ for each $\lambda \in \F_{q^n}$.
Since
\[ L(\lambda y)=H^{n-1}(\lambda y)+\dots+H(\lambda y)+\lambda y= \]
\[ =\lambda^{\sigma^{n-1}}H^{n-1}(y)+\ldots+\lambda^\sigma H(y)+\lambda y, \]
by Theorem \ref{Gow}, it follows that $L(\lambda y)$ cannot be zero for each $\lambda \in \F_{q^n}$.
So, for some $\lambda \in \F_{q^n}$, we have that $L(\lambda y)\in \ker f$ and $L(\lambda y)\neq 0$ and hence $\ker f \neq \{0\}$.

\smallskip

\emph{3.} Let $\dim_{\F_{q^n}} E_1(H^n)=h$ with $1\leq h\leq t$.
If $y\in E_1(H^n)$, then
\[H^n(H(y))=H(H^n(y))=H(y)\]
and so $H(E_1(H^n))\subseteq E_1(H^n)$.
Hence, we may consider
\[ H^*\colon y\in E_1(H^n) \mapsto H(y) \in E_1(H^n),\]
which is an $\F_{q^n}$-semilinear transformation of $E_1(H^n)=V(h,q^n)$.
If $H^*(y)=0$, then $H(y)=0$ and hence $H^n(y)=y=0$, since $y \in E_1(H^n)$.
It follows that $H^*$ is an $\F_{q^n}$-semilinear invertible transformation of $E_1(H^n)=V(h,q^n)$ with companion automorphism $\sigma$.
Since $\sigma|_{\F_{q^n}}$ has order $n$ and $H^{*n}(y)=H^n(y)=y$ for each $y \in E_1(H^n)$, it follows that $H^*$ has order $n$.
So, by Theorem \ref{fixsemi}, we have that $\dim_{\F_q} E_1(H)=\dim_{\F_{q^n}} E_1(H^n)$ and
\[ \langle E_1(H)\rangle_{\F_{q^n}}=E_1(H^n), \]
and hence
\[ \dim_{\F_q} \ker f=\dim_{\F_{q}} E_1(H)=\dim_{\F_{q^n}} E_1(H^n)=\dim_{\F_{q^n}}(\ker((G\circ\sigma)^n-\mathrm{id})). \]
\qed

\section{Method of finding the roots}\label{sec:findingroots}

In this section we develop a method for finding roots of polynomials of Form \eqref{form}; indeed, we prove that in order to find the roots of a polynomial of Form \eqref{form} we just need to find the roots of a $q^n$-polynomial.

\begin{theorem}\label{findroots}
Let
\[ f(x)=-x+b_0x^{\sigma}+b_1x^{\sigma q^n}+b_2x^{\sigma q^{2n}}+\ldots+b_{t-1}x^{\sigma q^{n(t-1)}} \in \mathcal{L}_{nt,q}, \]
where $\sigma$ a generator of the Galois group $\mathrm{Gal}(\F_{q^n}\colon\F_q)$.
Let consider $G(x)=\sum_{i=0}^{t-1} a_ix^{q^{ni}}$, $H=G\circ\sigma$ and $L=H^{n-1}+H^{n-2}+\ldots+H+\mathrm{id}$.
Then
\[ \ker f=L(E_1(H^n)). \]
\end{theorem}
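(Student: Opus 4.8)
The plan is to show the set equality $\ker f = L(E_1(H^n))$ by proving the two inclusions separately, leaning heavily on the structural facts already established in the proof of Theorem \ref{main}. The key observation is the factorization $H^n - \mathrm{id} = (H-\mathrm{id})\circ L$, which was the engine of part 2 of Theorem \ref{main}; I expect to reuse it almost verbatim here.

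First I would establish the inclusion $L(E_1(H^n)) \subseteq \ker f$. Take any $z \in E_1(H^n)$, so $H^n(z) = z$. Then from the factorization we get $(H-\mathrm{id})(L(z)) = (H^n - \mathrm{id})(z) = 0$, which says precisely that $L(z) \in \ker(H - \mathrm{id}) = \ker f$. Since $z$ was arbitrary, this gives the containment immediately; this direction is essentially free once the factorization is in hand.

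The reverse inclusion $\ker f \subseteq L(E_1(H^n))$ is where the real content lies. The natural route is to show that $L$ restricts to a \emph{surjection} from $E_1(H^n)$ onto $E_1(H) = \ker f$. By \eqref{A1(H)A1(Hn)} we have $\ker f = E_1(H) \subseteq E_1(H^n)$, and on $E_1(H^n)$ the map $H$ acts as the semilinear transformation $H^*$ of order $n$ introduced in part 3. For any $w \in \ker f$ we have $H(w) = w$, hence $H^i(w) = w$ for all $i$, so $L(w) = n\cdot w$; this means that on $\ker f$ the map $L$ acts as multiplication by $n \bmod p$. Thus if $\mathrm{char}\,\F_q \nmid n$ the restriction of $L$ to $\ker f$ is already invertible and surjectivity onto $\ker f$ is trivial — but since $n$ may be divisible by $p$ this argument is not available in general, and that is the main obstacle to handle.

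To circumvent this, I would argue via dimension counting rather than explicit inversion. Set $h = \dim_{\F_{q^n}} E_1(H^n)$. By the first inclusion, $L(E_1(H^n)) \subseteq \ker f$, and by Theorem \ref{main}(3) we have $\dim_{\F_q} \ker f = h$. It therefore suffices to show $\dim_{\F_q} L(E_1(H^n)) \geq h$, i.e. that $L$ is injective modulo the part of $E_1(H^n)$ that it kills, with image of full $\F_q$-dimension $h$. The cleanest way is to use Theorem \ref{fixsemi} as applied to $H^*$ in part 3: it gives an $\F_q$-basis $y_1,\dots,y_h$ of $E_1(H)$ that is simultaneously an $\F_{q^n}$-basis of $E_1(H^n)$. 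The computation $L(\lambda y) = \lambda^{\sigma^{n-1}} H^{n-1}(y) + \cdots + \lambda^{\sigma} H(y) + \lambda y$ carried out in part 2, together with the invocation of Theorem \ref{Gow}, shows that for each basis vector $y_j$ one can choose a scalar $\lambda_j \in \F_{q^n}$ with $L(\lambda_j y_j) \neq 0$ lying in $\ker f$; combining these appropriately produces $h$ linearly independent elements of $L(E_1(H^n))$ inside the $h$-dimensional space $\ker f$, forcing $L(E_1(H^n)) = \ker f$. The delicate point to get right is that the additive map $\lambda \mapsto L(\lambda y)$ is $\F_q$-linear with kernel of $\F_q$-dimension at most $n-1$ (by Theorem \ref{Gow}), so its image has $\F_q$-dimension at least $1$, and an accounting over the $h$ basis directions yields the required total dimension $h$.
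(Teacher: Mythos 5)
Your first inclusion is fine and coincides with the paper's. The gap is in the last step of the reverse inclusion. From Theorem \ref{Gow} you conclude that each $\F_q$-linear map $\lambda\in\F_{q^n}\mapsto L(\lambda y_j)$ has image of $\F_q$-dimension at least $1$ inside $\ker f$, and then assert that ``an accounting over the $h$ basis directions yields the required total dimension $h$.'' That accounting does not exist: the $h$ images are subspaces of the \emph{same} $h$-dimensional space $\ker f$, and nothing you have said prevents them from all lying on a single line, so you cannot extract $h$ linearly independent vectors from ``nonzero image in each direction.'' This is exactly the independence-across-directions issue you flagged as delicate, and as written it is unresolved. (A secondary quibble: Theorem \ref{Gow} concerns $\sigma$-polynomials acting on $\mathbb{L}=\F_{q^n}$ with coefficients in $\F_{q^n}$, whereas the coefficients $H^i(y_j)$ of your map lie in $\F_{q^{nt}}$; the kernel bound survives, but only after decomposing over an $\F_{q^n}$-basis of $\F_{q^{nt}}$, as is implicitly done in the proof of Theorem \ref{main}.)

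The repair is already contained in your own text, and it is precisely the paper's argument. Your basis vectors satisfy $y_j\in E_1(H)$, so $H^i(y_j)=y_j$ for all $i$, and the displayed formula collapses to $L(\lambda y_j)=\bigl(\lambda^{\sigma^{n-1}}+\cdots+\lambda^{\sigma}+\lambda\bigr)y_j=\mathrm{Tr}_{q^n/q}(\lambda)\,y_j$; your ``multiplication by $n$'' observation is the case $\lambda=1$, and replacing $n$ by the (always surjective) trace is exactly what kills the $p\mid n$ obstruction you were worried about. With this identity the image of $\lambda\mapsto L(\lambda y_j)$ is exactly the line $\F_q y_j$, which closes your dimension count — but in fact no count, no basis from Theorem \ref{fixsemi}, and no appeal to Theorem \ref{Gow} are needed at all: given any $y\in\ker f=E_1(H)\subseteq E_1(H^n)$ (inclusion \eqref{A1(H)A1(Hn)}), note $\lambda y\in E_1(H^n)$ for every $\lambda\in\F_{q^n}$ since $E_1(H^n)$ is an $\F_{q^n}$-subspace, choose $\lambda$ with $\mathrm{Tr}_{q^n/q}(\lambda)=1$, and conclude $L(\lambda y)=y$. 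This one-line surjectivity argument is the paper's proof of $\ker f\subseteq L(E_1(H^n))$, and it is where your attempt should have landed once the substitution $H^i(y_j)=y_j$ was made.
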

\begin{proof}
As already seen in the proof of Theorem \ref{main}, we have that
\[ H^n-\mathrm{id}=(H-\mathrm{id})\circ L, \]
where $L=H^{n-1}+\ldots+H+\mathrm{id}$ and if $y \in E_1(H^n)$ then $L(y)\in E_1(H)$.
Now, consider
\[ L^*\colon y \in E_1(H^n) \mapsto L(y) \in E_1(H), \]
which is an $\F_q$-linear map.
If $\lambda \in \F_{q^n}$ and $y \in E_1(H)$ then $\lambda y\in E_1(H^n)$ and
\[ L^*(\lambda y)=(\lambda^{\sigma^{n-1}}+\ldots+\lambda^\sigma+\lambda)y=\mathrm{Tr}_{q^n/q}(\lambda)y, \]
which implies that $E_1(H) \subseteq \mathrm{Im} L^*$ and so $L^*(E_1(H^n))=E_1(H)=\ker f$.
\end{proof}


Let see some working examples, in which we show how to use our result.

\begin{example}\label{ex:trin}
Let us consider $q=p^h$, $n=3$, $t\geq 2$ and $x^\sigma=x^{q}$, hence
\[ f(x)=-x-x^q+x^{q^4}\in\tilde{\mathcal{L}}_{3t,q}, \]
is a polynomial of Form \eqref{form}.
By Theorem \ref{main}, it follows that
\[\dim_{\F_q} \ker f=\dim_{\F_{q^3}} \ker (H^3-\mathrm{id}),\]
where $H(x)=-x^q+x^{q^4}$, and
\[ H^3(x)-x=x^{q^{12}}-3x^{q^9}+3x^{q^6}-x^{q^3}-x \in \mathcal{L}_{{3t},q}. \]
Also,
\[ L(x)=H^2(x)+H(x)+x=x^{q^8}-2x^{q^5}+x^{q^4}+x^{q^2}-x^q+x \in \mathcal{L}_{{3t},q}. \]
Therefore, by Theorem \ref{findroots} we have the following
\[ \ker f=\{x_0^{q^8}-2x_0^{q^5}+x_0^{q^4}+x_0^{q^2}-x_0^q+x_0 \colon x_0 \in \F_{q^{3t}}\,\,\text{and}\,\, H^3(x_0)=x_0\}. \]
The trivial upper bound for the dimension of the kernel of $f$ is $\dim_{\F_q} \ker f \leq 4$ and this bound can be reached. Indeed, choosing $t=5$ and $p=2$ then we have that $H^3(x)-x=x^{q^{12}}+x^{q^9}+x^{q^6}+x^{q^3}+x=\tr_{q^{15}/q^3}(x)$ modulo $x^{q^{15}}-x$ and so, in such a case,
\[ \dim_{\F_q} \ker f=4. \]
Also,
\[ \ker f=\{x_0^{q^8}+x_0^{q^4}+x_0^{q^2}+x_0^q+x_0 \colon x_0 \in \F_{q^{15}}\,\,\text{and}\,\, \tr_{q^{15}/q^3}(x_0)=0\}. \]
Suppose that $t=4$, then $H^3(x)-x$ modulo $x^{q^{12}}-x$ is
\[ H^3(x)-x=(-3x^{q^6}+3x^{q^3}-x)^{q^3} \]
and so
\[ \dim_{\F_{q^3}} \ker (H^3(x)-x)=\dim_{\F_{q^3}} \ker(-3x^{q^6}+3x^{q^3}-x)\leq 2. \]
Since
\[D(-3x^{q^6}+3x^{q^3}-x)=\begin{pmatrix} -1 & 3 & -3 & 0 \\ 0 & -1 & 3 & -3\\ -3 & 0 & -1 & 3 \\ 3 & -3 & 0 & -1 \end{pmatrix}, \]
then $\det D(-3x^{q^6}+3x^{q^3}-x)=7\cdot 13$ and $\det D_1(-3x^{q^6}+3x^{q^3}-x)=3^2$ and by using Theorems \ref{bence} and \ref{findroots}, we get that
\[ \dim_{\F_q} \ker f=\dim_{\F_{q^3}} \ker(-3x^{q^6}+3x^{q^3}-x)=\left\{ \begin{array}{llll} 1 & \text{if}\,\,p=7,13; \\ 0 & \text{if}\,\,p\neq7,13. \end{array} \right. \]
So, if $p\neq7,13$ the polynomial $f$ is a permutation polynomial and if either $p =7$ or $p=13$, then $\dim_{\F_q} \ker f=1$.

When $t=3$, then $H^3(x)-x=3x^{q^6}-4x$ seen modulo $x^{q^9}-x$ and $3x^{q^6}-4x=0$ for $x\neq0$ if and only if $x^{q^3-1}=\frac{3}4$, which admits $q^3-1$ roots if $\N_{q^9/q^3}(3/4)=1$ and zero solutions otherwise.
Since $\N_{q^9/q^3}(3/4)=1$ if and only if $p=37$, by Theorem \ref{main} we have that
\[ \dim_{\F_q}\ker f=\left\{ \begin{array}{lll} 1 & \text{if}\,\, p=37\\ 0 & \text{if}\,\, p \neq 37 \end{array}\right.. \]
Let $p=37$ and let $x_0 \in \F_{q^9}^*$ such that $x_0^{q^3-1}=\frac{3}4$, then
\[ L(x_0)=H^2(x_0)+H(x_0)+x_0=\frac{1}{16}x_0^{q^2}-\frac{1}4x_0^q+x_0, \]
and so by Theorem \ref{findroots}
\[ \ker f=\left\{\frac{1}{16}x_0^{q^2}-\frac{1}4x_0^q+x_0 \colon x_0\in \F_{q^9} \,\,\text{and}\,\, x_0^{q^3}=\frac{3}4x_0 \right\}. \]

When $t=2$, we have that $H^3(x)-x=-4x^{q^3}+3x$ modulo $x^{q^6}-x$ and as before
\[ \dim_{\F_q}\ker f=\left\{ \begin{array}{lll} 1 & \text{if}\,\, p=37\\ 0 & \text{if}\,\, p \neq 37 \end{array}\right., \]
and also
\[ \ker f=\left\{\frac{1}{16}x_0^{q^2}-\frac{1}4x_0^q+x_0 \colon x_0\in \F_{q^6} \,\,\text{and}\,\, x_0^{q^3}=\frac{3}4x_0 \right\}. \]
\end{example}

In the next section in Theorem \ref{thm:maintri} we will deal with trinomials in a more general fashion.

\section{Trinomials}\label{sec:trin}

Very recently, in \cite[Theorem 1.1]{McGuireMueller}, McGuire and Mueller provide a full characterization of linearized trinomials of the form $f(x)=ax+bx^q+x^{q^d} \in \tilde{\mathcal{L}}_{m,q}$ that split completely over $\mathbb{F}_{q^m}$ when $m\leq d^2-d+1$.
In particular, when $m\leq d(d-1)$ and $d$ does not divide $m$, they prove that $\dim_{\F_q} \ker f<d$.
Their results have been already used in \cite{TrombZullo} for decodability issue of rank metric codes and this motivates to study similar results for trinomials of Form \eqref{form}.
More precisely, we deal with trinomials of Form \eqref{form}, i.e.

\begin{equation}\label{eq:trin}
f(x)=-x+ax^\sigma+bx^{\sigma q^{\ell n}} \in \tilde{\mathcal{L}}_{nt,q},
\end{equation}
where $\sigma$ is a generator of $\mathrm{Gal}(\F_{q^n}\colon \F_q)$, $a,b \in \F_{q^{nt}}$ and $1\leq \ell \leq t-1$.

We assume that $a$ and $b$ are nonzero, in order to avoid trivial cases.

\begin{theorem}\label{thm:maintri}
Let
\[ f(x)=-x+ax^\sigma+bx^{\sigma q^{\ell n}} \in \tilde{\mathcal{L}}_{nt,q}, \]
where $\sigma$ is a generator of $\mathrm{Gal}(\F_{q^n}\colon\F_q)$, $x^\sigma=x^{q^s}$ with $\gcd(s,n)=1$ and $a,b\neq 0$.
Then
\[\dim_{\F_q} \ker f\leq t.\]
Moreover, if $t\leq n\ell+s$ and one of the following conditions hold:
\begin{enumerate}
  \item $s+h\ell \not\equiv0 \pmod{t}$, for each $0\leq h\leq n$;
  \item $j\ell \not\equiv 0 \pmod{t}$, for each $1\leq j\leq n$, and $s\not\equiv 0\pmod{t}$;
  \item $\ell n\not\equiv i\ell \pmod{t}$, for each $0\leq i\leq n-1$, and $s+\ell n\not\equiv 0\pmod{t}$;
\end{enumerate}
then
\begin{equation}\label{eq:min}
\dim_{\F_q}\ker f \leq \min\{t-1,(n-1)\ell+s\}.
\end{equation}
\end{theorem}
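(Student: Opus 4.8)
\emph{Proof proposal.}

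The first inequality $\dim_{\F_q}\ker f\le t$ is immediate from the first part of Theorem \ref{main}, since $f$ has Form \eqref{form} with $b_0=a$, $b_\ell=b$ and all other coefficients zero. For the refined bound the plan is to transport everything to the associated $q^n$-polynomial and read it off from a single closed-form expansion. I would set $Q=q^n$, let $X$ denote the $Q$-Frobenius $x\mapsto x^{Q}$, and work in the twisted polynomial ring in which $X\alpha=\alpha^{Q}X$ for $\alpha\in\F_{q^{nt}}$; there, composition of $\F_q$-linear maps is multiplication and reduction modulo $x^{Q^t}-x$ amounts to reducing $X$-exponents modulo $t$. With $G=a+bX^{\ell}$ and $F$ the $q$-Frobenius we have $H=G\circ\sigma=(a+bX^{\ell})F^{s}$, and by the third statement of Theorem \ref{main}, $\dim_{\F_q}\ker f=\dim_{\F_{q^n}}\ker(H^n-\mathrm{id})$, so it suffices to control this $Q$-polynomial.

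Next I would compute $H^n$. Since $F^{s}\bigl(a^{q^{js}}+b^{q^{js}}X^{\ell}\bigr)=\bigl(a^{q^{(j+1)s}}+b^{q^{(j+1)s}}X^{\ell}\bigr)F^{s}$, an easy induction gives
\[ H^n=\Bigl(\prod_{j=0}^{n-1}\bigl(a^{q^{js}}+b^{q^{js}}X^{\ell}\bigr)\Bigr)X^{s}. \]
Expanding the product, $H^n=\sum_{i=0}^{n}c_i\,X^{\,i\ell+s}$, where $c_i$ is a sum over the $i$-subsets of $\{0,\dots,n-1\}$; crucially $c_0=\prod_j a^{q^{js}}\neq0$ and $c_n=\prod_j b^{q^{js}}\neq0$ because $a,b\neq0$. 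Thus, before reduction, the exponents occurring in $H^n-\mathrm{id}$ are $0$ together with $i\ell+s$ for $0\le i\le n$, and the two extreme coefficients are nonzero.

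The first task is to prove $H^n\neq\mathrm{id}$ in $\tilde{\mathcal L}_{t,Q}$, which by the final assertion of Theorem \ref{main} yields $\dim_{\F_q}\ker f\le t-1$. After reducing exponents modulo $t$, each hypothesis is designed to isolate one surviving coefficient: under (1) no exponent $i\ell+s$ reduces to $0$, so the coefficient of $X^{0}$ in $H^n$ is $0\neq1$; under (2) only $i=0$ satisfies $i\ell+s\equiv s$, so the coefficient of $X^{s}$ is $c_0\neq0$ while $\mathrm{id}$ contributes nothing there (as $s\not\equiv0$); under (3) only $i=n$ satisfies $i\ell+s\equiv n\ell+s$, so the coefficient of $X^{(n\ell+s)\bmod t}$ equals $c_n\neq0$ at a nonzero exponent. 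In every case $H^n-\mathrm{id}\neq0$, so $H^n\neq\mathrm{id}$.

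The second task is the bound $\dim_{\F_q}\ker f\le(n-1)\ell+s$, obtained by applying the Gow–Quinlan bound (Theorem \ref{Gow}) to the nonzero polynomial $H^n-\mathrm{id}\in\tilde{\mathcal L}_{t,Q}$, whose kernel dimension is at most its $Q$-degree. If $(n-1)\ell+s\ge t-1$, the claimed minimum is $t-1$ and the previous paragraph already suffices. Otherwise $(n-1)\ell+s\le t-2$, whence the exponents $i\ell+s$ with $0\le i\le n-1$ all lie in $[s,(n-1)\ell+s]\subset[0,t-1]$ and remain unreduced, while $t\le n\ell+s\le 2t-3$ (using $\ell\le t-1$) forces the top exponent down to $(n\ell+s)-t\le\ell-2<(n-1)\ell+s$. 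Hence every reduced exponent of $H^n-\mathrm{id}$ is at most $(n-1)\ell+s$, the $Q$-degree is $\le(n-1)\ell+s$, and Gow's bound closes the case; combining the two tasks gives the minimum. I expect the main obstacle to be the skew-polynomial bookkeeping, namely establishing the closed form for $H^n$, verifying that the two extreme coefficients are nonzero, and then correctly matching the three hypotheses to the three ``surviving coefficient'' arguments after reduction modulo $t$; the degree estimate for the second bound is a short inequality chase that relies precisely on $t\le n\ell+s$ together with $\ell\le t-1$.
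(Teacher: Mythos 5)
Your proposal is correct and follows essentially the same route as the paper: both reduce via Theorem \ref{main} to the $q^n$-polynomial $H^n-\mathrm{id}=-x+\sum_{i=0}^{n}c_i x^{q^{n(i\ell+s)}}$ with nonzero extreme coefficients (your twisted-polynomial-ring bookkeeping is just a reformulation of the paper's direct computation of $(G\circ\sigma)^n$), match conditions 1, 2, 3 to the same three surviving monomials $x$, $x^{q^{ns}}$, $x^{q^{n(s+n\ell)}}$ after reduction modulo $x^{q^{nt}}-x$ to get nonvanishing and hence $\dim_{\F_q}\ker f\le t-1$, and obtain the bound $(n-1)\ell+s$ by the same estimate on the reduced top exponent (your inequality $n\ell+s-t\le \ell-2$ is the paper's ``$g\ge s+(n-1)\ell$ forces $\ell\ge tk\ge t$'' in the special case $k=1$, which your case split legitimately reduces to) combined with Theorem \ref{Gow}. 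The only slip is the displayed formula $c_n=\prod_j b^{q^{js}}$, which omits the Frobenius twists incurred when coefficients pass the factors $X^{\ell}$ (the correct exponent is $1+q^{s+n\ell}+q^{2(s+n\ell)}+\cdots+q^{(n-1)(s+n\ell)}$, as in the paper's $\alpha_n$), but since your argument uses only $c_n\neq 0$, which holds for any product of conjugates of $b\neq 0$, nothing breaks.
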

\begin{proof}
By Theorem \ref{main}, we know that $\dim_{\F_q} \ker f\leq t$ and $\dim_{\F_q} \ker f=\dim_{\F_{q^n}} \ker (H^n-\mathrm{id})$,
where $H(x)=ax^\sigma+bx^{\sigma q^{\ell n}}$.
Hence,
\[ H^n(x)-x=-x+\alpha_0 x^{\sigma^n}+\alpha_1 x^{\sigma^n q^{\ell n}}+\ldots+\alpha_{n-1}x^{\sigma^n q^{(n-1)n\ell}}+\alpha_nx^{\sigma^n q^{n^2\ell}}\in \mathcal{L}_{t,q^n},\]
i.e.
\[ H^n(x)-x=-x+\alpha_0 x^{q^{ns}}+\alpha_1 x^{ q^{n(s+\ell)}}+\ldots+\alpha_nx^{q^{n(s+n\ell)}},\]
where $\alpha_0= a^{1+\sigma+\ldots+\sigma^{n-1}}$ and $\alpha_n=b^{1+\sigma q^{n\ell}+\ldots+\sigma^{n-1}q^{(n-1)\ell n}}$.
In order to improve the bound of Theorem \ref{main}, we show that the polynomial $H^n(x)-x \pmod{x^{q^{nt}}-x}$ is not the zero polynomial when one among Assumptions 1., 2. and 3. hold.
Indeed, if at least one of them holds, then reducing $H^n(x)-x$ modulo $x^{q^{nt}}-x$ at least one of the monomials $x$, $x^{q^s}$  or $x^{q^{n(s+n\ell)}}$ has nonzero coefficient.
Hence,
\[ \dim_{\F_q}\ker f \leq t-1. \]
Note that the $q^n$-degree of $H^n(x)-x\in\mathcal{L}_{t,q^n}$ is $s+\ell n$.
Also, if $t\leq s+\ell n$, then the $q^n$-degree of $H^n(x)-x \pmod{x^{q^{nt}}-x}$ is less than $(n-1)\ell+s$.
Indeed,
\[ \alpha_n x^{q^{n(s+n\ell)}}=\alpha_nx^{q^{ng}} \pmod{x^{q^{nt}}-x}, \]
with $s+n\ell=tk+g$ for some positive integer $k\geq 1$.
If $g\geq s+(n-1)\ell=tk+g-\ell$, then
\[ \ell \geq tk \geq t, \]
which is a contradiction as $\ell<t$.
Therefore, the $q^n$-degree of $H^n(x)-x \pmod{x^{q^{nt}}-x}$ is less than or equal to $(n-1)\ell+s$ and so \eqref{eq:min} holds.
\end{proof}

Let consider the following trinomial
\[ f(x)=-x+ax^{q^s}+b x^{q^h} \in \tilde{\mathcal{L}}_{m,q}, \]
with $a,b \neq 0$.
We may use our results to get relevant information when $\gcd(s,m)=1$, $m=nt$ and $n \equiv h \pmod{s}$.
In particular, under the above assumptions, we have that $h=n+\ell s$, for some positive integer $\ell$ and if $1 \leq \ell \leq t-1$ and at least one of the Assumptions 1., 2. or 3. hold, then $\dim_{\F_q} \ker f \leq t-1$.
When $s=1$, we may compare our results with the above mentioned results of  McGuire and Mueller.
The following example shows that, as long as the extension degree involved is not too large, the previous theorem can improve in some particular cases the results of McGuire and Mueller cited above.

\begin{example}
Consider
\[-x+ax^q+bx^{q^7} \in \tilde{\mathcal{L}}_{3t,q},\]
with $a,b \neq 0$.
Results of \cite{McGuireMueller} imply that if $t\leq 14$ and $t\neq 7,14$, then $\dim_{\F_q} \ker f<7$.
Applying Theorem \ref{thm:maintri} to these trinomials with $s=1$, $n=3$ and $\ell=2$, we get that if $4\leq t\leq 7$, then
\[\dim_{\F_q} \ker f \leq \min\{t-1,5\}.\]
\end{example}

\begin{example}
Consider
\[-x+ax^{q^2}+bx^{q^{11}} \in \tilde{\mathcal{L}}_{3t,q},\]
with $a,b \neq 0$.
Results of \cite{McGuireMueller} cannot be applied for this polynomial, whereas Theorem \ref{thm:maintri} with $s=2$, $n=3$ and $\ell=3$ implies that for each $a,b \in \F_{q^n}^*$
\[\dim_{\F_q} \ker f \leq \min\{t-1,8\},\]
when $3\leq t\leq 11$.
\end{example}

\section{Proof of Theorem \ref{main1}}\label{sec:matrix}

Here, we present results of the form \cite{Cs2019,teoremone,McGuireSheekey}, i.e. we characterize the number of roots of a $q$-polynomial of Form \eqref{form} by giving relations on its coefficients and involving a much smaller matrix.

Let denote by $\tau_{q^i}$ the automorphism of $\F_{q^{nt}}$ defined as $\tau_{q^i} (x)=x^{q^i}$.\\
\noindent The following remark will be useful in the sequel.

\begin{remark}\label{DicksonAlgebra}
In \cite{wl}, the authors prove the existence of an isomorphism between the $\F_q$-algebra $\tilde{\mathcal{L}}_{m,q}$ and the $\F_q$-algebra of Dickson matrices of order $m$ over $\F_{q^m}$. Here, we point out some properties proved in \cite{wl}:
\begin{itemize}
  \item $D(f+g)=D(f)+D(g)$, for $f,g \in \tilde{\mathcal{L}}_{m,q}$;
  \item $D(f\circ g)=D(f)\cdot D(g)$, for $f,g \in \tilde{\mathcal{L}}_{m,q}$;
  \item if $f(x)=\sum_i a_i x^{q^i}$ and $g(x)=\tau\circ f\circ \tau^{-1}(x)=\sum_i a_i^\tau x^{q^i}$, with $\tau \in \mathrm{Aut}(\F_{q^m})$, then $D(g)=D(f)^\tau$.
\end{itemize}
\end{remark}

\emph{Proof of Theorem \ref{main1}}\\
As already observed, $f=G\circ \sigma-\mathrm{id}$, where $G$ is the $\F_{q^n}$-linear map of $\F_{q^{nt}}$ defined by the rule $G(x)=\sum_{i=0}^{t-1} b_ix^{q^{ni}}$. Denote by
\[G^{\sigma^{-1}}:=\sigma^{-1}\circ G\circ \sigma,\]
and note that $G^{\sigma^{-1}}(x)=\sum_{i=0}^{t-1} b_i^{\sigma^{-1}}x^{q^{ni}}$.
Then $G\circ\sigma=\sigma\circ G^{\sigma^{-1}}$ and for each positive integer $i$ we have that $G\circ \sigma^i=\sigma^i\circ G^{\sigma^{-i}}$.
Now, we show that
\[H^\ell=(G\circ\sigma)^\ell=\sigma^{\ell-1}\circ G^{\sigma^{-(\ell-1)}}\circ\ldots\circ G\circ \sigma,\]
for each positive integer $\ell$.
Clearly,
\[ H^2=(G\circ\sigma)^2=\sigma \circ G^{\sigma^{-1}} \circ G \circ \sigma. \]
Suppose that for $\ell\geq2$, $H^{\ell-1}=\sigma^{\ell-2}\circ G^{\sigma^{-(\ell-2)}}\circ\ldots\circ G\circ \sigma$, then
\[ H^\ell=(G\circ \sigma)\circ(G\circ\sigma)^{\ell-1}=(G\circ \sigma)\circ (\sigma^{\ell-2}\circ G^{\sigma^{-(\ell-2)}}\circ\ldots\circ G\circ \sigma)= \]
\[ =\sigma^{\ell-1}\circ G^{\sigma^{-(\ell-1)}}\circ G^{\sigma^{-(\ell-2)}}\circ\ldots\circ G\circ \sigma. \]
Hence, $H^n=(G\circ\sigma)^n=\sigma^{n-1}\circ G^{\sigma^{-(n-1)}}\circ\ldots\circ G\circ \sigma$.
Also,
\[ \sigma\circ(H^n-\mathrm{id})\circ\sigma^{-1}=\sigma^n \overline{G}-\mathrm{id}, \]
where $\overline{G}=G^{\sigma^{-(n-1)}}\circ\ldots\circ G$.
Clearly,
\[ \dim_{\F_{q^n}} \ker (H^n-\mathrm{id})=\dim_{\F_{q^n}} \ker(\sigma^n \overline{G}-\mathrm{id})=\dim_{\F_{q^n}} \ker(\overline{G}-\sigma^{-n}). \]
Since $\tau=\sigma^{-1}$, then $\overline{G}=G^{\tau^{n-1}}\circ\ldots\circ G^\tau\circ G$ and $\overline{G}-\sigma^{-n}=\overline{G}-\tau^n$.
Note that $\overline{G}-\tau^n$ is an $\F_{q^n}$-linear transformation and by \cite{wl}, we have that
\[ \dim_{\F_{q^n}} \ker(\overline{G}-\tau^n)=t-\mathrm{rk}(D(\overline{G}-\tau^n)).\]
Also, since $D(\tau_{q^n})=J$ and $\tau^n=\tau_{q^{sn}}=(\tau_{q^n})^s$, by Remark \ref{DicksonAlgebra} it follows that
\[ D(\overline{G}-\tau^n)=D(G)^{\tau^{n-1}}\cdot\ldots\cdot D(G)-J^s, \]
and since $D(G)$ coincides with $D$ we have the assertion.
\qed

As a consequence of the previous result we can characterize permutation (i.e. invertible) linearized polynomials of Form \eqref{form} and we can characterize and give sufficient conditions on the case of maximum dimension of the kernel w.r.t. bound 1. of Theorem \ref{main}, similarly to \cite[Theorem 1.2]{teoremone} and \cite[Theorem 10]{GQ2009x}.

\begin{corollary}\label{corollary:main}
Let
\[ f(x)=-x+b_0x^{\sigma}+b_1x^{\sigma q^n}+b_2x^{\sigma q^{2n}}+\ldots+b_{t-1}x^{\sigma q^{n(t-1)}} \in \mathcal{L}_{nt,q}, \]
where $\sigma \in \mathrm{Aut}(\F_{q^{nt}})$ such that $\sigma|_{\F_{q^n}}\colon\F_{q^n}\rightarrow\F_{q^n}$ has order $n$.
Let $D$, $J$ and $s$ as in Theorem \ref{main1}. Then
\begin{itemize}
  \item $f(x)$ is a permutation polynomial if and only if
  \[\det (D^{\tau^{n-1}} \cdot D^{\tau^{n-2}} \cdot \ldots \cdot D-J^s) \neq 0;\]
  \item $\dim_{\F_q}\ker f=t$ if and only if
  \begin{equation}\label{matrixcondition2}
  D^{\tau^{n-1}} \cdot D^{\tau^{n-2}} \cdot \ldots \cdot D=J^s.
  \end{equation}
\end{itemize}
In particular, if $\dim_{\F_q}\ker f=t$, then $\N_{q^{tn}/q^n}(\det(D))=(-1)^{s(t-1)}$.
\end{corollary}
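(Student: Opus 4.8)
The plan is to read off the identity by taking determinants on both sides of the matrix equation that characterises the extremal case. Under the hypothesis $\dim_{\F_q}\ker f=t$, the second item of the corollary provides the exact equality
\[ D^{\tau^{n-1}}\cdot D^{\tau^{n-2}}\cdots D=J^s \]
of $t\times t$ matrices over $\F_{q^{nt}}$, so it suffices to compute the determinant of each side and compare. Note that both sides are invertible (the right-hand side is a permutation matrix), so in particular $\det D\neq 0$ and every relative norm of $\det D$ below is well defined and nonzero.

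For the right-hand side, $J$ is the permutation matrix of the cyclic shift of the $t$ coordinates, that is, of a single $t$-cycle; hence $\det J=(-1)^{t-1}$ and $\det(J^s)=(\det J)^s=(-1)^{s(t-1)}$. For the left-hand side I would use multiplicativity of the determinant together with the fact that applying a field automorphism entrywise commutes with taking the determinant, so that $\det(D^{\tau^i})=(\det D)^{\tau^i}$ for each $i$; this turns the left-hand side into
\[ \det\!\left(D^{\tau^{n-1}}\cdots D\right)=\prod_{i=0}^{n-1}(\det D)^{\tau^i}, \]
a product of Frobenius-conjugates of the single scalar $\det D$.

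The decisive step is then to recognise this product of conjugates as a relative norm of $\det D$ and to equate it with the value $(-1)^{s(t-1)}$ found above, which yields
\[ \N_{q^{tn}/q^n}(\det D)=(-1)^{s(t-1)}. \]
I expect the norm identification to be the main obstacle, and it is where all the care is needed: since $\tau\colon x\mapsto x^{q^s}$ with $\gcd(s,n)=1$, one must track precisely the cyclic action of $\tau$ on $\det D$ — using the Dickson-matrix structure of $D$, which controls the fixed field of this action — and verify that the $n$ conjugates $(\det D)^{\tau^i}$ assemble into exactly the asserted norm $\N_{q^{tn}/q^n}$ and not some other relative norm. Once this identification is settled, the two remaining ingredients, namely the sign computation $\det(J^s)=(-1)^{s(t-1)}$ and the interchange of the determinant with the entrywise Frobenius, are routine, and combining them gives the displayed identity.
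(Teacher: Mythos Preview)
Your approach is exactly what the paper intends; the corollary is stated without proof, as a direct consequence of Theorem~\ref{main1}, and the ``in particular'' clause is obtained by taking determinants on both sides of \eqref{matrixcondition2}, precisely as you outline. The sign computation $\det(J^s)=(-1)^{s(t-1)}$ and the interchange $\det(D^{\tau^i})=(\det D)^{\tau^i}$ are correct.

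On the step you flag as the main obstacle, the key fact is that $D$ is the Dickson matrix of the $q^n$-polynomial $G$ over $\F_{q^{nt}}$, and applying $x\mapsto x^{q^n}$ entrywise to such a matrix conjugates it by the cyclic-shift permutation matrix; hence $\det D\in\F_{q^n}$. Since $\tau|_{\F_{q^n}}$ generates $\mathrm{Gal}(\F_{q^n}/\F_q)$, the product $\prod_{i=0}^{n-1}(\det D)^{\tau^i}$ is therefore $\N_{q^n/q}(\det D)$, \emph{not} $\N_{q^{tn}/q^n}(\det D)$. Your computation thus gives
\[
\N_{q^n/q}(\det D)=(-1)^{s(t-1)},
\]
and the norm $\N_{q^{tn}/q^n}$ printed in the statement is a misprint: the left-hand side of \eqref{matrixcondition2} furnishes $n$ conjugates, not $t$, so no rearrangement will produce the degree-$t$ relative norm. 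This reading is confirmed later in the paper, where for $t=2$ the authors apply this very corollary to conclude $\N_{q^n/q}(b_0^{q^n+1}-b_1^{q^n+1})=-1$, with $\det D=b_0^{q^n+1}-b_1^{q^n+1}$. So do not try to force the identification with $\N_{q^{tn}/q^n}$; once you note $\det D\in\F_{q^n}$, the argument is complete with the corrected norm.
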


If the $q^n$-polynomial $G$ has non-trivial kernel we can improve the bound on the dimension of the kernel of $f$.

\begin{corollary}\label{cor:Gnoinv}
Let
\[ f(x)=-x+b_0x^{\sigma}+b_1x^{\sigma q^n}+b_2x^{\sigma q^{2n}}+\ldots+b_{t-1}x^{\sigma q^{n(t-1)}} \in \mathcal{L}_{nt,q}, \]
where $\sigma$ is a generator of $\mathrm{Gal}(\F_{q^n}\colon\F_q)$.
Let $D$, $G$, $J$ and $s$ as in Theorem \ref{main1}. Then
\[ \dim_{\F_q} \ker f \leq t- \dim_{\F_{q^n}} \ker G. \]
\end{corollary}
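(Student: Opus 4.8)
The plan is to combine Theorem~\ref{main} with the algebraic identities already obtained in the proof of Theorem~\ref{main1}, so that the claim reduces to a single rank estimate for the composition map $\overline{G}$.

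First I would recall that, by point~3 of Theorem~\ref{main} together with the chain of equalities established while proving Theorem~\ref{main1},
\[ \dim_{\F_q}\ker f=\dim_{\F_{q^n}}\ker\left((G\circ\sigma)^n-\mathrm{id}\right)=\dim_{\F_{q^n}}\ker(\overline{G}-\tau^n), \]
where $\tau=\sigma^{-1}$ and $\overline{G}=G^{\tau^{n-1}}\circ\cdots\circ G^{\tau}\circ G$. Since $\sigma|_{\F_{q^n}}$ has order $n$, the transformation $\tau^n=\tau_{q^{sn}}$ fixes $\F_{q^n}$ pointwise, so it is an \emph{invertible} $\F_{q^n}$-linear map of $\F_{q^{nt}}$ (its Dickson matrix is $J^s$); in particular $\tau^{-n}$ is $\F_{q^n}$-linear.

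The key step is the observation that $\ker(\overline{G}-\tau^n)\subseteq\tau^{-n}(\mathrm{Im}\,\overline{G})$: indeed, if $\overline{G}(y)=\tau^n(y)$ then $y=\tau^{-n}(\overline{G}(y))$ lies in $\tau^{-n}(\mathrm{Im}\,\overline{G})$. As $\tau^{-n}$ is a bijective $\F_{q^n}$-linear map, this yields
\[ \dim_{\F_{q^n}}\ker(\overline{G}-\tau^n)\leq\dim_{\F_{q^n}}\mathrm{Im}\,\overline{G}=\mathrm{rk}(\overline{G}). \]

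Finally I would bound $\mathrm{rk}(\overline{G})$ from above by $\mathrm{rk}(D)$. Since $\overline{G}$ is a composition in which $G$ occurs as a factor, $\mathrm{rk}(\overline{G})\leq\mathrm{rk}(G)$; and because $D(G^{\tau^i})=D^{\tau^i}$ and applying a field automorphism to a matrix preserves its rank, $\mathrm{rk}(G)=\mathrm{rk}(D)=t-\dim_{\F_{q^n}}\ker G$ (by Remark~\ref{DicksonAlgebra} and \cite{wl}). Chaining the inequalities gives exactly $\dim_{\F_q}\ker f\leq t-\dim_{\F_{q^n}}\ker G$. The same argument can be read off directly from \eqref{matrixcondition}, since $\ker(A-J^s)\subseteq(J^s)^{-1}\mathrm{Im}\,A$ forces $\mathrm{rk}(A-J^s)\geq t-\mathrm{rk}(A)$ for $A=D^{\tau^{n-1}}\cdots D$. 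I do not expect a genuine obstacle here; the only point requiring care is the verification that $\tau^n$ is $\F_{q^n}$-linear and invertible rather than merely semilinear, which is precisely where the order-$n$ hypothesis on $\sigma|_{\F_{q^n}}$ enters.
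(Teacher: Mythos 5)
Your proposal is correct and takes essentially the same route as the paper: both reduce, via Theorem \ref{main} and the identity $\overline{H}=\overline{G}-\tau^n$ established in the proof of Theorem \ref{main1}, to the bound $\dim_{\F_{q^n}}\ker(\overline{G}-\tau^n)\leq \mathrm{rk}(\overline{G})\leq \mathrm{rk}(G)=t-\dim_{\F_{q^n}}\ker G$, using that the product of the (equal-rank) twisted Dickson matrices has rank at most $\mathrm{rk}(D)$. The only cosmetic difference is that you derive the key inequality from the inclusion $\ker(\overline{G}-\tau^n)\subseteq\tau^{-n}(\mathrm{Im}\,\overline{G})$, whereas the paper deduces the same inequality from the equivalent observation that $\ker\overline{H}\cap\ker\overline{G}=\{0\}$, both resting on the invertibility and $\F_{q^n}$-linearity of $\tau^n$.
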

\begin{proof}
Let $h=\dim_{\F_{q^n}} \ker G$, $M=D^{\tau^{n-1}} \cdot D^{\tau^{n-2}} \cdot \ldots \cdot D$ and let $\overline{G}$ and $\overline{H}$ the $q^n$-polynomials such that $D(\overline{G})=M$ and $D(\overline{H})=M-J^s$, i.e. $\overline{H}=\overline{G}-\tau^n$.
By Theorem \ref{main} $\dim_{\F_q} \ker f=\dim_{\F_{q^n}} \ker \overline{H}$, since $\dim_{\F_{q^n}} \ker G=h$, we have that $\mathrm{rk}(D)=t-h$ and $\mathrm{rk}(M)\le t-h$.
Also, since $\tau^n$ is invertible, then $\ker \overline{H} \cap \ker \overline{G}=\{0\}$ and hence
\[ \dim_{\F_{q^n}} \ker \overline{H} + \dim_{\F_{q^n}} \ker \overline{G} \leq t, \]
i.e.
\[ \dim_{\F_q} \ker f=\dim_{\F_{q^n}} \ker \overline{H} \leq t-\dim_{\F_{q^n}} \ker \overline{G}=\mathrm{rk}(M)\leq t-h. \]
\end{proof}

\begin{remark}\label{adjoint}
The \emph{adjoint} of a $q$-polynomial $f(x)=\sum_{i=0}^{n-1}a_i x^{q^i}$, with respect to the bilinear form $\langle x,y\rangle:=\mathrm{Tr}_{q^n/q}(xy)$, is given by
\[\hat{f}(x):=\sum_{i=0}^{n-1}a_{i}^{q^{n-i}} x^{q^{n-i}}.\]
In particular, if $f(x)$ is a $q$-polynomial of Form \eqref{form}, then
\[ f(x)=ax+b_0x^{q^s}+b_1x^{q^{n+s}}+b_2x^{q^{2n+s}}+\ldots+b_{t-1}x^{ q^{n(t-1)+s}}\in \tilde{\mathcal{L}}_{nt,q}, \]
with $\gcd(s,n)=1$ and its adjoint is
\[ \hat{f}(x)=ax+b_0^{q^{nt-s}}x^{q^{nt-s}}+b_1^{q^{n(t-1)-s}}x^{q^{n(t-1)-s}}+b_2^{q^{n(t-2)-s}} x^{q^{n(t-2)-s}}+\ldots+b_{t-1}^{ q^{n-s}}x^{ q^{n-s}}, \]
i.e. $\hat{f}(x)$ is of Form \eqref{form} with $\sigma=\tau_{q^{n-s}}$.
Therefore, the family of $q$-polynomials we are studying is closed by the adjoint operation.
Furthermore, we underline that by \cite[Lemma 2.6]{BGMP2015}, see also \cite[pages 407--408]{CsMP}, the kernels of $f$ and $\hat{f}$ have the same dimension and hence we may study this class up to the adjoint operation.
\end{remark}

\subsection{Recursive relations for the maximal case}

As in \cite{teoremone}, we show that Equality \eqref{matrixcondition2} holds if and only if $\mathbf{e}_0=(1,0,\ldots,0)$ is sent by $M=D^{\tau^{n-1}} \cdot \ldots \cdot D$ in a particular vector, which implies less conditions to manage in the maximal case.

\begin{lemma}\label{tcond}
Let $D$ be the matrix as in Theorem \ref{main1}.
Equality \eqref{matrixcondition2} holds if and only if
\[\mathbf{e}_0 D^{\tau^{n-1}} \cdot D^{\tau^{n-2}} \cdot \ldots \cdot D =\mathbf{e}_{r},\]
where $r \equiv s \pmod{t}$ and $\mathbf{e}_i$ is the vector of $\F_{q^{nt}}^t$ whose $i$-th component is one and all the others are zero.
\end{lemma}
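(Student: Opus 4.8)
The plan is to reduce the matrix identity \eqref{matrixcondition2} to a single statement about how the product $M := D^{\tau^{n-1}} \cdot D^{\tau^{n-2}} \cdots D$ acts on the first standard basis vector $\mathbf{e}_0$. The key structural fact I would exploit is the special form of $J$: it is the permutation (cyclic shift) matrix, so $J^s$ is again a permutation matrix, namely the one realizing the cyclic shift by $s$ positions. Concretely, I expect that $\mathbf{e}_i J = \mathbf{e}_{i-1}$ (indices mod $t$), and therefore $\mathbf{e}_i J^s = \mathbf{e}_{i-s \bmod t}$, while $J^s \mathbf{e}_j^{\top}$ shifts in the complementary direction. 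The first step is to record these elementary identities for $J$ and fix conventions (row versus column action), since the entire argument is bookkeeping with the cyclic indices $r \equiv s \pmod t$.

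Next I would establish the forward direction: if $M = J^s$, then $\mathbf{e}_0 M = \mathbf{e}_0 J^s = \mathbf{e}_r$ is immediate from the shift description of $J^s$. The substance of the lemma is the converse, and here I would use the additional algebraic structure that $M$ carries. Both $D$ and each Galois conjugate $D^{\tau^i}$ are Dickson (autocirculant) matrices, and by Remark \ref{DicksonAlgebra} the product $M = D(\overline{G})$ is again a Dickson matrix, since the class of Dickson matrices is closed under products and under entrywise application of $\tau$. The crucial point is that a Dickson matrix is completely determined by its first row: the $i$-th row is obtained from the first by applying $\tau_{q^n}^{i}$ entrywise and cyclically shifting. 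Likewise $J^s$ is a Dickson matrix, corresponding to the $q^n$-polynomial $x^{q^{ns}}$ (equivalently $\tau^{n}$), so to prove $M = J^s$ it suffices to prove that $M$ and $J^s$ agree in their first row.

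The middle steps then become: translate the hypothesis $\mathbf{e}_0 M = \mathbf{e}_r$ into the statement ``the first row of $M$ equals $\mathbf{e}_r$,'' observe that $\mathbf{e}_r$ is exactly the first row of $J^s$, and conclude that the two Dickson matrices share a first row and hence coincide. In polynomial language this says: writing $M = D(\overline{G})$, the condition on $\mathbf{e}_0$ forces $\overline{G}(x) = x^{q^{ns}} = \tau^n(x)$, which is precisely $M = J^s$. I anticipate the main obstacle to be purely technical rather than conceptual: pinning down the exact index $r$ with $r \equiv s \pmod t$ and $0 \le r \le t-1$, and checking that the cyclic-shift conventions for $J^s$ acting on row vectors are consistent with the conjugation formula $D^{\tau}$ in Remark \ref{DicksonAlgebra}. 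Once the ``first row determines a Dickson matrix'' principle is stated cleanly, the equivalence drops out, and the payoff — as in \cite{teoremone} — is that verifying the maximal-kernel case requires checking only one vector equation instead of the full $t \times t$ matrix identity.
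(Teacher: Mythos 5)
Your proposal is correct and follows essentially the same route as the paper's proof: the paper likewise observes (via Remark \ref{DicksonAlgebra}) that both $M=D^{\tau^{n-1}}\cdots D$ and $J^s$ are Dickson matrices of $q^n$-polynomials, hence autocirculant and determined by their first rows, so equality of first rows, $\mathbf{e}_0 M=\mathbf{e}_0 J^s=\mathbf{e}_r$, is equivalent to $M=J^s$. Your additional bookkeeping on the cyclic-shift action of $J^s$ and the identification $J^s=D(\tau^n)$ is consistent with the paper and fills in exactly the details the paper leaves implicit.
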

\begin{proof}
As seen in the proof of Theorem \ref{main1}, the matrices $M=D^{\tau^{n-1}} \cdot D^{\tau^{n-2}} \cdot \ldots \cdot D$ and $J^s$ are the Dickson matrices of two $q^n$-polynomials, hence they are autocirculant.
Therefore,  $D^{\tau^{n-1}} \cdot D^{\tau^{n-2}} \cdot \ldots \cdot D =J^s$ if and only if they coincide on the first row, i.e.
\[ \mathbf{e}_0 D^{\tau^{n-1}} \cdot D^{\tau^{n-2}} \cdot \ldots \cdot D =\mathbf{e}_0 J^s=\mathbf{e}_r. \]
\end{proof}

By Lemma \ref{tcond}, to describe recursively the relations on the coefficients of $f(x)$ characterizing the case in which the kernel of $f$ has dimension $t$, we need just to multiply $D^{\tau^{n-1}} \cdot D^{\tau^{n-2}} \cdot \ldots \cdot D$ by $\mathbf{e}_0$ or, equivalently,
\begin{equation}\label{eq:DDtau}
D^T \cdot (D^{\tau})^T \cdot \ldots \cdot (D^{\tau^{n-1}})^T\mathbf{e}_0^T =\mathbf{e}_r^T.
\end{equation}
Let $\phi$ be the $\F_{q^n}$-semilinear transformation having $D^T$ as associated matrix w.r.t. the canonical basis and $\tau$ as the companion automorphism. Then \eqref{eq:DDtau} holds if and only if $\phi^n(\mathbf{e}_0)=\mathbf{e}_r$.

We have $\phi(\mathbf{e}_0)=(b_0,\ldots,b_{t-1})$ where $f(x)=-x+b_0x^{\sigma}+b_1x^{\sigma q^n}+b_2x^{\sigma q^{2n}}+\ldots+b_{t-1}x^{\sigma q^{n(t-1)}} $, and for $i\geq 1$ let
\[ \phi^i(\mathbf{e}_0)=(P_{0,i},\ldots,P_{t-1,i}), \]
where $P_{j,i}$ is seen as a polynomial in $\F_{q^{tn}}$ in the variables $b_0,\ldots,b_{t-1}$ with $j \in \{0,\ldots,t-1\}$, then
\[ \phi^{i+1}(\mathbf{e}_0)=\left( \left(\begin{array}{cccccccccccc}
	b_0 & b_{t-1}^{q^n} & \cdots & b_1^{ q^{n(t-1)}} \\
	\vdots & \vdots & & \vdots &  \\
	b_{t-1} & b_{t-2}^{ q^n} & \cdots & b_0^{ q^{n(t-1)}}
	\end{array}\right) \left( \begin{array}{lllr} P_{0,i}^\tau \\ \vdots \\P_{t-1,i}^\tau \end{array} \right) \right)^T= \]
\[ =(b_0 P_{0,i}^\tau+b_{t-1}^{ q^n}P_{1,i}^\tau+\ldots+b_1^{ q^{n(t-1)}}P_{t-1,i}^\tau,\ldots,b_{t-1} P_{0,i}^\tau+b_{t-2}^{q^n}P_{1,i}^\tau+\ldots+b_0^{q^{n(t-1)}}P_{t-1,i}^\tau ). \]

Therefore, we can define recursively the polynomials $P_{j,i}$ as follows; for $i=1$
\[ P_{0,1}=b_0, \,\,\,\,\,\ldots\,\,\,\,\,\,\, P_{t-1,1}=b_{t-1}, \]
and for $i\geq 2$
\[ \begin{array}{lll} P_{0,i}= b_0 P_{0,i-1}^\tau+b_{t-1}^{q^n}P_{1,i-1}^\tau+\ldots+b_1^{ q^{n(t-1)}}P_{t-1,i-1}^\tau, \\ \vdots \\ P_{t-1,i}=b_{t-1} P_{0,i-1}^\tau+b_{t-2}^{ q^n}P_{1,i-1}^\tau+\ldots+b_0^{q^{n(t-1)}}P_{t-1,i-1}^\tau. \end{array} \]

As a consequence of Corollary \ref{corollary:main} and Lemma \ref{tcond}, we have the following result.

\begin{corollary}\label{cor:cond}
The dimension of the kernel of $f(x)$ is $t$ if and only if
\[ P_{j,n}=\left\{ \begin{array}{llrr} 1 & \text{if}\,\, j=r \\ 0 & \text{otherwise} \end{array} \right., \]
where $r \equiv s \pmod{t}$.
\end{corollary}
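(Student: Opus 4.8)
The plan is to observe that Corollary \ref{cor:cond} is purely an assembly of the three facts already in place: Corollary \ref{corollary:main}, Lemma \ref{tcond}, and the recursive setup for $\phi^n(\mathbf{e}_0)$ introduced just above the statement. The key conceptual point is that the asserted condition ``$P_{r,n}=1$ and $P_{j,n}=0$ for $j\neq r$'' is nothing more than the single vector identity $\phi^n(\mathbf{e}_0)=\mathbf{e}_r$ written out coordinatewise, since $\phi^i(\mathbf{e}_0)=(P_{0,i},\ldots,P_{t-1,i})$ by construction. So the entire proof reduces to establishing the equivalence of $\dim_{\F_q}\ker f=t$ with $\phi^n(\mathbf{e}_0)=\mathbf{e}_r$, and this is exactly the chain of reductions the preceding material was designed to produce.

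I would carry this out in the following order. First, invoke Corollary \ref{corollary:main} to rewrite $\dim_{\F_q}\ker f=t$ as the matrix identity \eqref{matrixcondition2}, i.e. $M:=D^{\tau^{n-1}}\cdot D^{\tau^{n-2}}\cdots D=J^s$. Second, apply Lemma \ref{tcond}: since $M$ and $J^s$ are both autocirculant, they agree iff their first rows agree, so \eqref{matrixcondition2} is equivalent to the row identity $\mathbf{e}_0 M=\mathbf{e}_r$ with $r\equiv s\pmod t$. Third, transpose this row identity to $M^T\mathbf{e}_0^T=\mathbf{e}_r^T$; because products transpose in reverse order, $M^T=D^T\cdot(D^\tau)^T\cdots(D^{\tau^{n-1}})^T$, so this is precisely \eqref{eq:DDtau}. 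Fourth, use the identification established immediately before the corollary, namely that \eqref{eq:DDtau} holds iff $\phi^n(\mathbf{e}_0)=\mathbf{e}_r$ for the $\F_{q^n}$-semilinear map $\phi$ with matrix $D^T$ and companion automorphism $\tau$. Finally, read off $\phi^n(\mathbf{e}_0)=(P_{0,n},\ldots,P_{t-1,n})$ from the recursion and compare with $\mathbf{e}_r$ to conclude.

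The proof is thus essentially bookkeeping, and the only step requiring genuine care is that bookkeeping. Specifically, one must confirm that the reversal of factors under transposition matches the left-to-right product appearing in \eqref{eq:DDtau}, and that the entries of $\mathbf{e}_0$ lie in $\F_q$, so that iterating the semilinear $\phi$ applies $\tau^n$ to a vector it fixes and the iterate $\phi^n(\mathbf{e}_0)$ genuinely equals $M^T\mathbf{e}_0^T$ rather than some twisted variant. Once these identifications are verified there is no substantive obstacle, and the equivalence follows by concatenating the four steps above.
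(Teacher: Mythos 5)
Your proposal is correct and follows exactly the chain the paper itself relies on: Corollary \ref{corollary:main} and Lemma \ref{tcond} reduce $\dim_{\F_q}\ker f=t$ to $\mathbf{e}_0 M=\mathbf{e}_r$, transposition gives \eqref{eq:DDtau}, and the identification $\phi^n(\mathbf{e}_0)=(P_{0,n},\ldots,P_{t-1,n})$ from the recursion yields the coordinatewise statement. Your two bookkeeping checks (reversal of factors under transposition, and $\mathbf{e}_0$ having $\F_q$-entries so that $\phi^n(\mathbf{e}_0)$ is the untwisted product $D^T(D^\tau)^T\cdots(D^{\tau^{n-1}})^T\mathbf{e}_0^T$) are precisely the points the paper handles implicitly, so nothing is missing.
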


\section{Criteria for $t=2$}\label{sec:criteria}

In this section we will deal with polynomials of this form
\begin{equation}\label{formt=2}
f(x)=-x+b_0x^\sigma+b_1x^{\sigma q^n} \in \tilde{\mathcal{L}}_{2n,q},
\end{equation}
with $\sigma$ a generator of $\mathrm{Gal}(\F_{q^{n}}:\F_q)$ and $b_0, b_1\neq0$. Let $\tau=\sigma^{-1}$ and let $s$ be the minimum positive integer such that $\tau=\tau_{q^s}$.
We may assume w.l.o.g. that $s$ is odd. Indeed, if $s$ is even then we may consider
\[f(x)=-x+b_1x^{\sigma'}+b_0x^{\sigma' q^n},\]
with $\sigma'=\tau_{q^{n+s}}$ and $\gcd(s+n,2n)=1$.
By Theorem \ref{main} it follows that
\[ \dim_{\F_q} \ker\,f\leq 2, \]
and by Corollary \ref{corollary:main} we have that $\dim_{\F_q} \ker\,f= 2$ if and only if \eqref{matrixcondition2} holds.
Therefore, $\dim_{\F_q} \ker\,f= 2$ if and only if
\begin{equation}\label{matrixconditiont=2}
D^{\tau^{n-1}} \cdot D^{\tau^{n-2}} \cdot \ldots \cdot D=J^s=J,
\end{equation}
where $D=\left( \begin{array}{llrr} b_0 & b_1\\ b_1^{q^n} & b_0^{q^n}  \end{array} \right)$ and $J=\left( \begin{array}{llrr} 0 & 1 \\ 1 & 0 \end{array} \right)$.

Since $\det(D)=b_0^{q^n+1}-b_1^{q^n+1}$, by \eqref{matrixconditiont=2} we have the following result.

\begin{proposition}
If $f(x)=-x+b_0x^\sigma+b_1x^{\sigma q^n} \in \mathcal{L}_{2n,q}$ has kernel of dimension two then
\[ (b_0^{q^n+1}-b_1^{q^n+1})^{1+\tau+\ldots+\tau^{n-1}}=-1, \]
and hence $\N_{q^n/q}(b_0^{q^n+1}-b_1^{q^n+1})=-1$.
\end{proposition}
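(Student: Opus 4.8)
The plan is to read off the scalar identity directly from the matrix characterization already established in this section, by applying the determinant. By \eqref{matrixconditiont=2}, the hypothesis $\dim_{\F_q}\ker f=2$ is equivalent to
\[ D^{\tau^{n-1}}\cdot D^{\tau^{n-2}}\cdots D^{\tau}\cdot D = J^s = J, \]
the last equality holding because $s$ is odd, so $J^s=J$. The entire argument then consists in taking determinants of both sides of this matrix equation.

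First I would use the multiplicativity of the determinant together with the observation that applying the entrywise automorphism $\tau^i$ to a matrix applies $\tau^i$ to its determinant; this is immediate since $\det$ is a polynomial in the entries with coefficients in the prime field, which $\tau$ fixes. This gives
\[ (\det D)^{\tau^{n-1}}\cdot(\det D)^{\tau^{n-2}}\cdots(\det D)^{\tau}\cdot(\det D)=\det J. \]
A direct computation yields $\det D=b_0^{q^n+1}-b_1^{q^n+1}$ and $\det J=-1$, whence
\[ (b_0^{q^n+1}-b_1^{q^n+1})^{1+\tau+\cdots+\tau^{n-1}}=-1, \]
which is the first assertion of the proposition.

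For the ``hence'' part I would note that $\det D=\N_{q^{2n}/q^n}(b_0)-\N_{q^{2n}/q^n}(b_1)$ lies in $\F_{q^n}$. Since $\tau=\tau_{q^s}$ with $\gcd(s,n)=1$ generates $\mathrm{Gal}(\F_{q^n}\colon\F_q)$, the exponents $q^{is}$ reduced modulo $q^n-1$ run through $\{1,q,\ldots,q^{n-1}\}$ as $i=0,\ldots,n-1$, so on any element of $\F_{q^n}$ the operator $1+\tau+\cdots+\tau^{n-1}$ coincides with the norm $\N_{q^n/q}$. Applying this to $\det D$ turns the displayed identity into $\N_{q^n/q}(b_0^{q^n+1}-b_1^{q^n+1})=-1$. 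There is no serious obstacle here: the argument is essentially a one-line consequence of \eqref{matrixconditiont=2}, and the only points needing a word of justification are the commutation of the entrywise automorphism with the determinant and the identification of $1+\tau+\cdots+\tau^{n-1}$ with $\N_{q^n/q}$ on $\F_{q^n}$, both of which follow at once from $\gcd(s,n)=1$ and $\det D\in\F_{q^n}$.
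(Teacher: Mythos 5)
Your proof is correct and follows essentially the same route as the paper: the paper likewise derives the proposition by taking determinants in the matrix identity \eqref{matrixconditiont=2}, using $\det D=b_0^{q^n+1}-b_1^{q^n+1}$, $\det J=-1$, and the fact that $1+\tau+\cdots+\tau^{n-1}$ acts as $\N_{q^n/q}$ on $\F_{q^n}$ since $\gcd(s,n)=1$. Your write-up merely makes explicit the small justifications (entrywise automorphisms commute with $\det$; $J^s=J$ for $s$ odd) that the paper leaves implicit.
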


As a consequence of Corollaries \ref{corollary:main} and \ref{cor:cond}, we have the following result.

\begin{corollary}\label{cor:cond}
The dimension of the kernel of $f(x)$ is two if and only if
\[ \left\{ \begin{array}{llr} P_{0,n}=0 \\ P_{1,n}=1 \end{array} \right.. \]
\end{corollary}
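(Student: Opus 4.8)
The final statement (Corollary \ref{cor:cond}) claims that for $f(x)=-x+b_0x^\sigma+b_1x^{\sigma q^n}$ with $t=2$, the kernel has dimension two if and only if $P_{0,n}=0$ and $P_{1,n}=1$. My plan is to view this as a direct specialization of the machinery already assembled for general $t$, now with $t=2$ and with the additional normalizing assumption that $s$ is odd (so that $J^s=J$). The first step is to invoke Corollary \ref{corollary:main}: since $\dim_{\F_q}\ker f\leq t=2$ by Theorem \ref{main}, the maximal case $\dim_{\F_q}\ker f=2$ is equivalent to the matrix identity $D^{\tau^{n-1}}\cdot D^{\tau^{n-2}}\cdots D=J^s$. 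Because we have reduced to $s$ odd, $J^2=I$ gives $J^s=J$, so the condition becomes exactly \eqref{matrixconditiont=2}.

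Next I would translate this matrix identity into the vector condition supplied by Lemma \ref{tcond}. That lemma says the equality $M=J^s$ (with $M=D^{\tau^{n-1}}\cdots D$) holds if and only if $\mathbf{e}_0 M=\mathbf{e}_r$, where $r\equiv s\pmod t$. For $t=2$ and $s$ odd we have $r\equiv 1\pmod 2$, so $\mathbf{e}_r=\mathbf{e}_1=(0,1)$. The point of Lemma \ref{tcond} is precisely that $M$ and $J^s$ are both Dickson (autocirculant) matrices, so agreement on the first row forces agreement everywhere; this is what lets us replace the full matrix identity with a single row condition.

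The remaining step is to identify the entries of $\mathbf{e}_0 M$ with the recursively defined polynomials $P_{j,n}$. Following the setup preceding Corollary \ref{cor:cond} (the general-$t$ version), the semilinear map $\phi$ with matrix $D^T$ and companion automorphism $\tau$ satisfies $\phi^n(\mathbf{e}_0)=(P_{0,n},\ldots,P_{t-1,n})$, and equation \eqref{eq:DDtau} shows that $\mathbf{e}_0 M=\mathbf{e}_r$ is equivalent to $\phi^n(\mathbf{e}_0)=\mathbf{e}_r$. Specializing to $t=2$, this reads $(P_{0,n},P_{1,n})=(0,1)$, which is exactly the asserted system. So the corollary follows by chaining Corollary \ref{corollary:main}, Lemma \ref{tcond}, and the recursive description of the $P_{j,i}$, all of which are already established for arbitrary $t$; the $t=2$ statement is their immediate instantiation together with the normalization $s$ odd that collapses $J^s$ to $J$.

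I do not expect a genuine obstacle here, since every ingredient is in place from the general theory. The only points requiring care are bookkeeping: confirming that the assumption ``$s$ odd'' (justified earlier in the section by passing to $\sigma'=\tau_{q^{n+s}}$ when $s$ is even) indeed yields $r\equiv 1\pmod 2$ and hence the target vector $\mathbf{e}_1$ rather than $\mathbf{e}_0$, and checking that the indexing of the $P_{j,n}$ matches the components of $\phi^n(\mathbf{e}_0)$ in the $t=2$ case. Both are routine verifications rather than substantive difficulties.
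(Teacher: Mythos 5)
Your proposal is correct and follows essentially the same route as the paper, which derives this $t=2$ corollary directly from Corollary \ref{corollary:main}, Lemma \ref{tcond} and the general recursive description of the $P_{j,n}$ (where $r\equiv s\pmod{t}$ gives $r=1$ since $s$ is normalized to be odd). Your bookkeeping on $J^s=J$ and the target vector $\mathbf{e}_1$ matches the paper's implicit specialization exactly.
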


\subsection{The $n=2$ case}

We are going to find more explicit relations on the coefficients of
\[ f(x)=-x+b_0x^\sigma+b_1x^{\sigma q^2} \in \mathcal{L}_{4,q}, \]
with $\sigma$ a generator of $\mathrm{Gal}(\F_{q^{2}}:\F_q)$, that completely characterize the dimension of the kernel of $f$.
The polynomial $f(x)$ is
either
\[ f_1(x)=-x+b_0x^q+b_1x^{q^3} \]
or
\[ f_2(x)=-x+b_0x^{q^3}+b_1x^q. \]

So, we may suppose that $f(x)=-x+b_0x^{q^3}+b_1x^q$ and hence $\tau=\tau_q$ and $s=1$.
By Corollary \ref{cor:cond}, we have that $f(x)$ has kernel of dimension two if and only if
\[ \left\{ \begin{array}{llr} P_{0,2}=0\\ P_{1,2}=1 \end{array} \right., \]
i.e.
\begin{equation}\label{condt=2n=2}
\left\{ \begin{array}{llr} b_0^{1+q}+b_1^{q^2+q}=0\\ b_1b_0^{q}+b_0^{q^2}b_1^{q}=1 \end{array} \right..
\end{equation}

From the previous equations we get that $b_0^{q+1}=-b_1^{q^2+q}$ and hence $\N_{q^4/q}(b_0)=\N_{q^4/q}(b_1)$.
In particular, $b_0$ and $b_1$ are nonzero.

\begin{proposition}\label{2dimt=2n=2}
The polynomial $f(x)$ has kernel of dimension two if and only if, denoting by $z=b_0/b_1$,
\begin{equation}\label{condt=2n=22}
\left\{ \begin{array}{llr} \N_{q^4/q}(z)=1\\ b_1^{q+1}=\frac{1}{z^q-z^{q^2+q+1}} \end{array} \right.
\end{equation}
is satisfied.
In particular, $\N_{q^4/q^2}(z)\neq1$.
\end{proposition}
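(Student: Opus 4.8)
The plan is to reduce the two scalar equations in \eqref{condt=2n=2} to a single relation by introducing $z=b_0/b_1$ and $w:=b_1^{q+1}$. Writing $b_0=zb_1$ and using $b_1^{q+1}=w$ together with $b_1^{q^2+q}=w^q$, the system \eqref{condt=2n=2} becomes
\[ z^{q+1}w+w^q=0 \quad\text{and}\quad z^qw+z^{q^2}w^q=1, \]
which I will call (A$'$) and (B$'$). Here $w\neq 0$ since $b_1\neq 0$, and $z\neq 0$ since $b_0\neq 0$; the whole argument is an equivalence between $\{(\text{A}'),(\text{B}')\}$ and \eqref{condt=2n=22}.

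For the forward implication the norm condition is essentially already available: the relation $b_0^{q+1}=-b_1^{q^2+q}$ noted just before the statement gives, after multiplying it by its $q^2$-th conjugate, $\N_{q^4/q}(b_0)=\N_{q^4/q}(b_1)$, hence $\N_{q^4/q}(z)=1$. Then (A$'$) yields $w^q=-z^{q+1}w$, and substituting this into (B$'$) gives $w\,(z^q-z^{q^2+q+1})=1$, so that $b_1^{q+1}=w=1/(z^q-z^{q^2+q+1})$; in particular the denominator is nonzero. This is precisely \eqref{condt=2n=22}.

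For the converse I would assume $\N_{q^4/q}(z)=1$ and $w=1/\delta$ with $\delta:=z^q-z^{q^2+q+1}\neq 0$, and verify that (A$'$) and (B$'$) hold. Clearing denominators, (A$'$) is equivalent to $\delta=-z^{q+1}\delta^q$; expanding $\delta^q=z^{q^2}-z^{q^3+q^2+q}$ and cancelling the common term $z^{q^2+q+1}$, this reduces to $z^q=z^{q^3+q^2+2q+1}$. Since $z^{q^3+q^2+2q+1}=z^q\cdot z^{1+q+q^2+q^3}=z^q\,\N_{q^4/q}(z)$, the identity holds exactly because $\N_{q^4/q}(z)=1$. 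Once (A$'$) is established, (B$'$) is immediate, as $z^qw+z^{q^2}w^q=z^qw-z^{q^2+q+1}w=w\delta=1$. Translating (A$'$) and (B$'$) back through $b_0=zb_1$ recovers \eqref{condt=2n=2}, so the kernel has dimension two.

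Finally, the side remark $\N_{q^4/q^2}(z)\neq1$ follows because $\N_{q^4/q^2}(z)=z^{q^2+1}$ and $\delta=z^q\bigl(1-z^{q^2+1}\bigr)$; since $b_1^{q+1}=1/\delta$ is a well-defined nonzero scalar, $\delta\neq 0$ forces $z^{q^2+1}\neq1$. The one genuinely computational step—and the only place where care is needed—is the reduction of (A$'$) to the norm identity in the converse; everything else is linear bookkeeping in $z$ and $w$.
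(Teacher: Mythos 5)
Your proof is correct and follows essentially the same route as the paper: both substitute $z=b_0/b_1$ into \eqref{condt=2n=2}, reduce to the pair $b_1^{q^2-1}=-z^{q+1}$ and $b_1^{q+1}(z^q-z^{q^2+q+1})=1$, and hinge on the same computation that $\delta=-z^{q+1}\delta^q$ (with $\delta=z^q-z^{q^2+q+1}$) is equivalent to $\N_{q^4/q}(z)=1$, with $\N_{q^4/q^2}(z)\neq 1$ coming from $\delta\neq 0$. Your introduction of $w=b_1^{q+1}$ and the split into two explicit implications merely spells out the substitution step that the paper treats tersely at the end of its proof.
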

\begin{proof}
We have to show that Systems \eqref{condt=2n=2} and \eqref{condt=2n=22} are equivalent.
Indeed, substituting $z$ in \eqref{condt=2n=2}, we have
\[ \left\{ \begin{array}{llr} b_1^{q+1}z^{q+1}+b_1^{q^2+q}=0\\ z^qb_1^{q+1}+z^{q^2}b_1^{q^2+q}=1 \end{array} \right., \]
and hence if and only if
\[ \left\{ \begin{array}{llr} b_1^{q^2-1}=-z^{q+1}\\ b_1^{q+1}(z^q-z^{q^2+q+1})=1 \end{array} \right.. \]
Therefore $\N_{q^4/q^2}(z)\neq 1$ and the previous system can be written as follows
\begin{equation}\label{normcond}
\left\{ \begin{array}{llr} (b_1^{q+1})^{q-1}=-z^{q+1}\\ b_1^{q+1}=\frac{1}{z^q-z^{q^2+q+1}} \end{array} \right..
\end{equation}
Substituting the second equation into the first equation, we get that the previous system is equivalent to \eqref{condt=2n=22}, since also the equations of \eqref{condt=2n=22} implies the first equation of \eqref{normcond}.
\end{proof}

It is possible to find many different choices for $b_0$ and $b_1$ in a way that $\dim_{\F_q}\ker\,f=2$, as shown in the next result.

\begin{proposition}\label{condt=2n=2exmax}
For each $z\in \F_{q^4}$ such that $\N_{q^4/q}(z)=1$ and $\N_{q^4/q^2}(z)\neq1$ there exist $q+1$ elements $b_1\in \F_{q^4}$ such that $\dim_{\F_q}\ker\,f=2$, where $f(x)=-x+b_0x^{q^3}+b_1x^q$ and $b_0/b_1=z$.
\end{proposition}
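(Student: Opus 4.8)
The plan is to start from the characterization in Proposition~\ref{2dimt=2n=2}: fixing $z\in\F_{q^4}$ with $\N_{q^4/q}(z)=1$ and $\N_{q^4/q^2}(z)\neq1$, and setting $b_0=zb_1$, the condition $\dim_{\F_q}\ker f=2$ is equivalent to the single equation \eqref{condt=2n=22}, namely
\[
b_1^{q+1}=c,\qquad\text{where}\qquad c:=\frac{1}{z^q-z^{q^2+q+1}}=\frac{1}{z^q\,(1-z^{q^2+1})}.
\]
First I would observe that $c\neq 0$: the factor $z^q$ is nonzero because $\N_{q^4/q}(z)=1$, and $1-z^{q^2+1}\neq 0$ precisely because $\N_{q^4/q^2}(z)=z^{q^2+1}\neq1$. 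Thus the whole proposition reduces to counting the solutions $b_1\in\F_{q^4}^*$ of $b_1^{q+1}=c$. Since $q+1\mid q^2-1\mid q^4-1$, the map $x\mapsto x^{q+1}$ on $\F_{q^4}^*$ has kernel of order $\gcd(q+1,q^4-1)=q+1$; hence this equation has either $0$ or exactly $q+1$ solutions, and it is solvable if and only if $c$ lies in the image of $x\mapsto x^{q+1}$, i.e. if and only if $c$ is a $(q+1)$-th power in $\F_{q^4}^*$. It therefore suffices to prove that $c$ is a $(q+1)$-th power, as the count $q+1$ then comes for free.

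The key reduction I would use is the equivalence
\[
c \text{ is a }(q+1)\text{-th power in }\F_{q^4}^* \iff c^{(q-1)(q^2+1)}=1 \iff c^{q^2+1}\in\F_q,
\]
where the first step is the cyclic-group criterion $\big((q^4-1)/(q+1)=(q-1)(q^2+1)\big)$ and the second follows from $q^3+q-q^2-1=(q-1)(q^2+1)$, so that $(c^{q^2+1})^{q}=c^{q^2+1}$ is the same as $c^{(q-1)(q^2+1)}=1$. Writing $w:=\N_{q^4/q^2}(z)=z^{q^2+1}\in\F_{q^2}$, the hypothesis $\N_{q^4/q}(z)=\N_{q^2/q}(w)=w^{q+1}=1$ gives $w^q=w^{-1}$. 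A short computation, using $w^{q^2}=w$, yields
\[
c^{q^2+1}=\frac{1}{z^{q+q^3}(1-w)^2}=\frac{1}{w^{q}(1-w)^2},
\]
since $z^{q+q^3}=(z^{q^2+1})^q=w^q$. Finally I would check $w^q(1-w)^2\in\F_q$ by applying the Frobenius $x\mapsto x^q$ of $\F_{q^2}$ over $\F_q$: using $w^{q^2}=w$ and $w^q=w^{-1}$,
\[
\big(w^q(1-w)^2\big)^q=w\,(1-w^{-1})^2=w\cdot\frac{(1-w)^2}{w^2}=w^{-1}(1-w)^2=w^q(1-w)^2,
\]
so $w^q(1-w)^2$, and hence $c^{q^2+1}=\N_{q^4/q^2}(c)$, is fixed by the Frobenius and lies in $\F_q$.

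Combining these steps, $c$ is a $(q+1)$-th power, the equation $b_1^{q+1}=c$ has exactly $q+1$ solutions in $\F_{q^4}^*$, and by Proposition~\ref{2dimt=2n=2} each of them produces a polynomial $f$ with $\dim_{\F_q}\ker f=2$, proving the claim. I expect the main obstacle to be the middle paragraph: recognizing that ``$c$ is a $(q+1)$-th power in $\F_{q^4}$'' is equivalent to the much more tractable condition ``$\N_{q^4/q^2}(c)\in\F_q$'', and then exploiting $w^{q+1}=1$ (equivalently $w^q=w^{-1}$) to make the Frobenius-invariance of $w^q(1-w)^2$ fall out. The surjectivity/counting part and the reduction to a single power equation are routine.
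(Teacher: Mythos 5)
Your proof is correct and takes essentially the same route as the paper: both reduce, via Proposition~\ref{2dimt=2n=2}, to the solvability of the single equation $b_1^{q+1}=c$ with $c=\frac{1}{z^q-z^{q^2+q+1}}$, note that the fiber then automatically has size $q+1$, apply the cyclic-group criterion $c^{(q^4-1)/(q+1)}=1$, and verify it by substituting $w=z^{q^2+1}\in\F_{q^2}$ with $w^{q+1}=1$. The only cosmetic difference is the final verification: the paper manipulates the exponent identity directly, reducing \eqref{findb1} to $y^{q+1}(y^q-y)=y^q-y$, which holds since $y^{q+1}=1$, whereas you repackage the criterion as $c^{q^2+1}=\N_{q^4/q^2}(c)\in\F_q$ and check Frobenius-invariance of $w^q(1-w)^2$ --- the same computation in slightly different clothing.
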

\begin{proof}
Suppose that $z\in \F_{q^4}$ with $\N_{q^4/q}(z)=1$ and $\N_{q^4/q^2}(z)\neq1$, the assertion is equivalent to find $q+1$ solutions in $b_1$ of the System \eqref{condt=2n=22}.
Such a values for $b_1$ exist if and only if
\[ \left( \frac{1}{z^q-z^{q^2+q+1}} \right)^{\frac{q^4-1}{q+1}}=1, \]
which happens if and only if
\begin{equation}\label{findb1}
z^{q(q-1)(q^2+1)}(1-z^{q^2+1})^{(q-1)(q^2+1)}=1.
\end{equation}
Let $y=z^{q^2+1}$ and note that $y \in \F_{q^2}$. Therefore, we are looking for $y \in \F_{q^2}$ such that $y^{q+1}=1$, $y\neq1$ and
\[ y^{1-q}(1-y)^{2q-2}=1, \]
i.e.
\[ y^{q+1}(y^q-y)=y^q-y, \]
which is satisfied since $y^{q+1}=1$.
Therefore, if $z\in \F_{q^4}$ with $\N_{q^4/q}(z)=1$ and $\N_{q^4/q^2}(z)\neq1$ Equation \eqref{findb1} is satisfied and so for each of such $z$ it is possible to find $q+1$ values for $b_1$ satisfying System \eqref{condt=2n=22}.
\end{proof}

As a consequence of the previous results we have the following classification theorem relating the dimension of the kernel of polynomials of the form $-x+b_0x^{q^3}+b_1x^q$.

\begin{theorem}\label{t=2n=2class}
Let
\[ f(x)=-x+b_0x^{q^3}+b_1x^q \in \mathcal{L}_{4,q}. \]
Then
\begin{enumerate}
  \item $\dim_{\F_q}\ker\,f \leq 2$;
  \item $\dim_{\F_q}\ker\,f =2$ if and only if
        \begin{equation}\label{systbinn=3}\left\{ \begin{array}{llr} \N_{q^4/q}(z)=1\\ b_1^{q+1}=\frac{1}{z^q-z^{q^2+q+1}} \end{array}, \right.\end{equation}
        where $z=b_0/b_1$;
  \item $f$ is invertible if and only if
        \[(b_0^{1+q}+b_1^{q+q^2})^{q^2+1}\neq(-1+b_0^{q}b_1+b_0^{q^2}b_1^{q})^{q^2+1};\]
  \item $\dim_{\F_q} \ker\,f=1$ if and only if
        \[(b_0^{1+q}+b_1^{q+q^2})^{q^2+1}=(-1+b_0^{q}b_1+b_0^{q^2}b_1^{q})^{q^2+1},\]
        and \eqref{systbinn=3} is not satisfied.
\end{enumerate}
\end{theorem}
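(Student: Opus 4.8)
The plan is to dispatch the four statements one at a time, reducing each to a result already established in this section. Statement 1 is immediate: it is point 1 of Theorem \ref{main} in the case $n=t=2$, giving $\dim_{\F_q}\ker f\le 2$. Statement 2 has in effect already been proved, since Proposition \ref{2dimt=2n=2} shows that $\dim_{\F_q}\ker f=2$ if and only if System \eqref{condt=2n=22} holds, and \eqref{condt=2n=22} is literally the system \eqref{systbinn=3} stated here; I would simply cite that proposition.

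The real content is Statement 3. By Corollary \ref{corollary:main}, $f$ is a permutation polynomial if and only if $\det(D^{\tau}\cdot D-J)\neq 0$, with $D$ and $J$ as in \eqref{matrixconditiont=2} (here $n-1=1$ and $J^s=J$ since $s=1$). First I would expand $M:=D^{\tau}\cdot D$ entrywise, obtaining
\[ M_{11}=b_0^{q+1}+b_1^{q^2+q},\quad M_{12}=b_0^{q}b_1+b_0^{q^2}b_1^{q},\quad M_{21}=b_0b_1^{q^3}+b_0^{q^3}b_1^{q^2},\quad M_{22}=b_0^{q^3+q^2}+b_1^{q^3+1}. \]
The crucial observation is that, because $b_0,b_1\in\F_{q^4}$ (so $b_i^{q^4}=b_i$), the lower entries are the $q^2$-Frobenius images of the upper ones: a direct check gives $M_{22}=M_{11}^{q^2}$ and $M_{21}=M_{12}^{q^2}$. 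Hence
\[ \det(M-J)=M_{11}M_{22}-(M_{12}-1)(M_{21}-1)=M_{11}^{q^2+1}-(M_{12}-1)^{q^2+1}, \]
and since $M_{11}=b_0^{1+q}+b_1^{q+q^2}$ and $M_{12}-1=-1+b_0^{q}b_1+b_0^{q^2}b_1^{q}$, the condition $\det(M-J)\neq 0$ is exactly the displayed inequality of Statement 3.

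Finally, Statement 4 follows formally from the previous three. By Statement 1 the dimension lies in $\{0,1,2\}$; it is $0$ precisely when $f$ is invertible, i.e.\ when the inequality of Statement 3 holds, and it is $2$ precisely when \eqref{systbinn=3} holds by Statement 2. Therefore the dimension is $1$ exactly when the invertibility inequality fails (so equality holds) and \eqref{systbinn=3} is not satisfied, which is the assertion of Statement 4.

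I expect the only genuine obstacle to be the computation underlying Statement 3: correctly multiplying out $D^{\tau}\cdot D$, verifying the conjugacy relations $M_{22}=M_{11}^{q^2}$ and $M_{21}=M_{12}^{q^2}$, and thereby collapsing the $2\times 2$ determinant into the norm-type expression $M_{11}^{q^2+1}-(M_{12}-1)^{q^2+1}$. Everything else is a direct appeal to Theorem \ref{main}, Proposition \ref{2dimt=2n=2} and Corollary \ref{corollary:main}.
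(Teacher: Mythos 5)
Your proposal is correct and follows essentially the same route as the paper, whose proof of this theorem is a one-line appeal to Theorem \ref{main}, Theorem \ref{main1} (via Corollary \ref{corollary:main}) and Proposition \ref{2dimt=2n=2}. The only thing you add is the explicit computation the paper leaves implicit --- expanding $M=D^{\tau}\cdot D$, noting $M_{22}=M_{11}^{q^2}$ and $M_{21}=M_{12}^{q^2}$ (valid since $b_i^{q^4}=b_i$), and collapsing $\det(M-J)$ to $M_{11}^{q^2+1}-(M_{12}-1)^{q^2+1}$ --- and that computation checks out.
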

\begin{proof}
It follows by Theorems \ref{main} and \ref{main1} and by Proposition \ref{2dimt=2n=2}.
\end{proof}

\subsection{The $n=3$ case}

Let consider any trinomial in $\F_{q^{6}}$ of the form
\[ f(x)=ax+bx^{q^i}+cx^{q^j} \in \mathcal{L}_{6,q}, \]
for some positive integers $i$ and $j$, with $i<j$, and $a,b,c \neq 0$.
It easy to see that each of such polynomials can be written, up to raising to a suitable $q$-th power, up to the adjoint operation (see Remark \ref{adjoint}) and up to multiply by an element of $\F_{q^6}^*$(\footnote{All of these operations do not change the dimension of the kernel.}), either as $\sigma$-polynomial
\[ f_1(x)=ax+bx^\sigma+cx^{\sigma^2},\]
with $\sigma \in \{\tau_q,\tau_{q^2}\}$,
or
\[ f_2(x)=ax+bx^{\sigma}+cx^{\sigma q^3}, \]
with $\sigma\in \{\tau_q,\tau_{q^5}\}$,
or
\[ f_3(x)=a'x+b'x^{\sigma}+c'x^{\sigma q^2}+d'x^{\sigma q^4}, \]
with $\sigma =\tau_q$ and one of $a',b',c'$ and $d'$ is zero.
For the former case, we may use the techniques developed in \cite{teoremone,McGuireSheekey} for establishing its number of roots directly from its coefficients by using a $2\times 2$ matrix. For such polynomials
\[ \dim_{\mathrm{Fix}(\sigma)} \ker f_1 \leq 2. \]
For the second and third cases, by Theorem \ref{main} we get, respectively,
\[ \dim_{\F_q}\ker f_2 \leq 2 \]
and
\[ \dim_{\F_q}\ker f_3 \leq 3 \]
which is not a consequence of Theorem \ref{Gow}.

We are going to investigate the second case and, up to the operations already discussed, we may choose $\sigma=\tau_{q^5}$ and
\[ f(x)=-x+b_0x^{q^5}+b_1x^{q^2}. \]
In this case $\tau=\tau_q$ and $s=1$.
By Corollary \ref{cor:cond}, we have that $f(x)$ has kernel of dimension two if and only if
\[ \left\{ \begin{array}{llr} P_{0,3}=0\\ P_{1,3}=1 \end{array} \right., \]
i.e.
\begin{equation}\label{condt=2n=3}
\left\{ \begin{array}{llr} b_0(b_0^{q+q^2}+b_1^{q^4+q^2})+b_1^{q^3}(b_1^qb_0^{q^2}+b_0^{q^4}b_1^{q^2})=0 \\
b_1(b_0^{q+q^2}+b_1^{q^4+q^2})+b_0^{q^3}(b_1^qb_0^{q^2}+b_0^{q^4}b_1^{q^2})=1 \end{array} \right..
\end{equation}

We are able to manage these relations, getting the following result.

\begin{theorem}\label{thm:binq6}
Let
\[f(x)=-x+b_0x^{q^5}+b_1x^{q^2} \in \mathcal{L}_{6,q},\]
with $b_0,b_1\neq0$, let $\alpha=b_1/b_0$ and $\displaystyle A=\frac{-\alpha^{q^3+1}}{1-\alpha^{q^3+1}}$.
The following holds.
\begin{enumerate}
    \item If $\dim_{\F_q} \ker f=2$, then $\alpha^{q^3+1}\neq1$ and the equation
    \begin{equation}\label{eq:Y}
    Y^2-(\mathrm{Tr}_{q^3/q}(A)-1)Y+\mathrm{N}_{q^3/q}(A)=0,
    \end{equation}
    admits either one root over $\F_q$ or two roots in $\F_{q^2}\setminus\F_q$.
    \item If $\alpha^{q^3+1} \in \F_{q^{3}}\setminus\{0,1\}$ and $A$ is such that Equation \eqref{eq:Y} admits either one root over $\F_q$ or two roots in $\F_{q^2}\setminus\F_q$, then there exists $b_0 \in \F_{q^6}^*$ such that
    \[ \dim_{\F_q} \ker (-x+b_0x^{q^5}+b_1 x^{q^2})=2, \]
    where $b_1=\alpha b_0$.
\end{enumerate}
\end{theorem}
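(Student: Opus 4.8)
The plan is to reduce everything to the two scalar equations in System \eqref{condt=2n=3}, which by Corollary \ref{cor:cond} are equivalent to $\dim_{\F_q}\ker f = 2$, and then to express these equations in terms of the single ratio $\alpha = b_1/b_0$ together with a suitable auxiliary quantity. First I would substitute $b_1 = \alpha b_0$ into both equations of \eqref{condt=2n=3} and factor out the appropriate powers of $b_0$. The symmetric structure of the two equations (they differ by swapping $b_0 \leftrightarrow b_1$ in the leading factor) suggests combining them: one natural move is to take a ratio or a suitable $\F_q$-linear combination so that the common bracketed factor $b_0^{q+q^2}+b_1^{q^4+q^2}$ and the factor $b_1^q b_0^{q^2}+b_0^{q^4}b_1^{q^2}$ get isolated. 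Dividing the first equation by the second (when the second is nonzero) should eliminate much of the dependence on $b_0$ and leave a relation purely in $\alpha$ and its conjugates, which is where the hypothesis $\alpha^{q^3+1}\neq 1$ and the definition $A = -\alpha^{q^3+1}/(1-\alpha^{q^3+1})$ will emerge.

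Next I would identify the quantity whose minimal polynomial over $\F_q$ is \eqref{eq:Y}. Since $A \in \F_{q^3}$ (one checks $\alpha^{q^3+1}\in\F_{q^3}$ because applying $\tau_{q^3}$ fixes $\alpha^{q^3+1}$, as $(\alpha^{q^3+1})^{q^3} = \alpha^{q^6+q^3} = \alpha^{1+q^3}$), the coefficients $\mathrm{Tr}_{q^3/q}(A)$ and $\N_{q^3/q}(A)$ lie in $\F_q$, so \eqref{eq:Y} is genuinely a quadratic over $\F_q$. The auxiliary root $Y$ should be built from the remaining free combination of $b_0$ and its conjugates — most plausibly a norm-type expression such as $b_0^{1+q^2+q^4}$ times a function of $\alpha$, or the value $b_0 \cdot b_1^{q^3}$ normalized appropriately — chosen so that the two equations of \eqref{condt=2n=3} become exactly the statement that $Y$ satisfies $Y^2 - (\mathrm{Tr}_{q^3/q}(A)-1)Y + \N_{q^3/q}(A)=0$. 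For part 1, I would show that solvability of \eqref{condt=2n=3} forces this $Y$ to exist in $\F_{q^6}$ while its trace and norm conditions constrain it to lie in $\F_{q^2}$; thus $Y$ is a root of a quadratic over $\F_q$, giving either one root in $\F_q$ or a conjugate pair in $\F_{q^2}\setminus\F_q$.

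For part 2, the direction is reversed: starting from a prescribed $\alpha$ with $\alpha^{q^3+1}\in\F_{q^3}\setminus\{0,1\}$ and a root $Y$ of \eqref{eq:Y} of the stated type, I would reconstruct an admissible $b_0\in\F_{q^6}^*$. This amounts to solving for $b_0$ the equation expressing $Y$ in terms of $b_0$ and $\alpha$ — a norm or conjugacy equation over $\F_{q^6}$ — and verifying that the resulting $b_0$ indeed makes both equations of \eqref{condt=2n=3} hold. The existence of $b_0$ will reduce to a solvability criterion of the shape $\N(\cdot)=1$ or a Hilbert-90-type condition, analogous to the computation carried out in Proposition \ref{condt=2n=2exmax}, and the restriction on $Y$ (one root in $\F_q$ or two in $\F_{q^2}\setminus\F_q$) is precisely what guarantees that this norm condition is met.

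\textbf{Main obstacle.} I expect the principal difficulty to be the algebraic bookkeeping that converts the two coupled Frobenius equations \eqref{condt=2n=3} into the single clean quadratic \eqref{eq:Y}: one must choose the right invariant $Y$ and carefully track conjugates under $\tau_q$ over the degree-$6$ extension, using the tower $\F_q \subset \F_{q^2}, \F_{q^3} \subset \F_{q^6}$ to decide which symmetric functions land in which subfield. Getting the factor $-1$ in the $Y$-coefficient and the exact forms $\mathrm{Tr}_{q^3/q}(A)$, $\N_{q^3/q}(A)$ to fall out correctly is the delicate step; once the correct substitution is found, both directions follow from standard trace/norm solvability arguments.
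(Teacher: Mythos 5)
Your outline reproduces the paper's own proof essentially step for step: the paper substitutes $b_1=\alpha b_0$ into \eqref{condt=2n=3}, combines the two equations with the multipliers $\alpha$ and $\alpha^{q^3}$ (your ``suitable linear combination''---note the ratio variant you also float is vacuous, since the first equation has zero right-hand side) to force $\alpha^{q^3+1}\neq1$ and introduce $A$, and then sets $z=b_0^{1+q+q^2}$, $x=b_0^{1+q^2+q^4}\in\F_{q^2}$ and $Y=x\,\alpha^{1+q^2+q^4}$---precisely your guessed norm-type invariant ``$b_0^{1+q^2+q^4}$ times a function of $\alpha$''---so that \eqref{condt=2n=3} collapses to the quadratic \eqref{eq:Y} over $\F_q$. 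For part 2 the paper likewise reconstructs $b_0$ from a root $\overline{Y}$ by solving the two norm conditions $b_0^{1+q+q^2}=\overline{z}$ and $b_0^{1+q^2+q^4}=\overline{x}$, the crux being the compatibility relation $\overline{x}=\overline{z}^{\,q^2-q+1}$ (verified using \eqref{eq:Y}) together with a Hilbert-90-type adjustment by $\eta$ with $\N_{q^6/q^2}(\eta)=1$, which is exactly the solvability criterion your plan anticipates by analogy with Proposition \ref{condt=2n=2exmax}.
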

\begin{proof}
{\bf 1.} Suppose that $\dim_{\F_q} \ker f=2$, then $b_0$ and $b_1$ satisfy \eqref{condt=2n=3} and substituting $\alpha$ in \eqref{condt=2n=3} we get
\begin{equation}\label{condt=2n=3alpha}
\left\{ \begin{array}{llr}
b_0(b_0^{q+q^2}+b_0^{q^4+q^2}\alpha^{q^4+q^2})+b_0^{q^3}\alpha^{q^3}(\alpha^q b_0^{q^2+q}+b_0^{q^4+q^2}\alpha^{q^2})=0 \\
b_0\alpha(b_0^{q+q^2}+b_0^{q^4+q^2}\alpha^{q^4+q^2})+b_0^{q^3}(\alpha^q b_0^{q^2+q}+b_0^{q^4+q^2}\alpha^{q^2})=1 \end{array} \right..
\end{equation}
By multiplying the first equality of \eqref{condt=2n=3alpha} by $\alpha$ and by subtracting the second equality,
and by multiplying the second equality of \eqref{condt=2n=3alpha} by $\alpha^{q^3}$ and by subtracting the first equality, we get that \eqref{condt=2n=3alpha} is equivalent to
\[ \left\{ \begin{array}{ll} (\alpha^{q^3+1}-1)(b_0^{q^3+q^2+q}\alpha^q+b_0^{q^4+q^3+q^2}\alpha^{q^2})=-1\\
(\alpha^{q^3+1}-1)(b_0^{1+q+q^2}+b_0^{1+q^2+q^4}\alpha^{q^2+q^4})=\alpha^{q^3}\end{array} \right..\]
Hence $\alpha^{q^3+1}\neq 1$ and we may write
\[  \left\{ \begin{array}{ll} b_0^{q^3+q^2+q}\alpha^q+b_0^{q^4+q^3+q^2}\alpha^{q^2}=\frac{-1}{\alpha^{q^3+1}-1}\\
b_0^{1+q+q^2}+b_0^{1+q^2+q^4}\alpha^{q^2+q^4}=\frac{\alpha^{q^3}}{\alpha^{q^3+1}-1}\end{array} \right.. \]
Let $z=b_0^{1+q+q^2}$ and $x=b_0^{1+q^2+q^4}$ and note that $x\in \F_{q^2}$.
With this notation and by multiplying the second equation by $\alpha$, the previous system becomes
\begin{equation}\label{eq:xz}
\left\{ \begin{array}{lll} (z\alpha)^q+(z\alpha)^{q^2}=1-A\\
z\alpha +x\alpha^{1+q^2+q^4}=A\\ z=b_0^{1+q+q^2}, \quad x=b_0^{1+q^2+q^4}\end{array} \right..
\end{equation}
Now, let $T=z\alpha$ and let $Y=x\alpha^{1+q^2+q^4}$, then \eqref{eq:xz} implies
\begin{equation}\label{eq:TA}
 \left\{ \begin{array}{lll} T^q+T^{q^2}=1-A\\
T +Y=A\\
x=z^{q^2-q+1}\end{array} \right..
\end{equation}
By combining the first and the second equation, since $Y \in \F_{q^2}$, we get
\begin{equation}\label{eq:17}
\left\{ \begin{array}{lll} T=A-Y\\
Y+Y^q=\tr_{q^3/q}(A)-1\\
\left(\frac{T}\alpha\right)^{q^2-q+1}=\frac{Y}{\alpha^{1+q^2+q^4}}\end{array} \right..
\end{equation}
By combining the first and the third, we get
\[ \frac{(A-Y)^{q^2-q+1}}{\alpha^{q^2-q+1}}=\frac{Y}{\alpha^{1+q^2+q^4}}, \]
since $\alpha^{q^3+1}=\frac{A}{A-1}$ and using the second equality, we get
\[Y^2-(\mathrm{Tr}_{q^3/q}(A)-1)Y+\mathrm{N}_{q^3/q}(A)=0.\]
If $Y \in \F_q$, then $2Y=\tr_{q^3/q}(A)-1$ and this implies that the above equation has one root, precisely if $q$ is odd $Y=\frac{\tr(A)-1}2$ and if $q$ is even $Y=\sqrt{A^{1+q+q^2}}$.
Therefore, \eqref{eq:Y} admits either one root over $\F_q$ or two roots in $\F_{q^2}\setminus\F_q$.

{\bf 2.} Now, assume that $\alpha$ is an element of $\F_{q^6}$ such that $\alpha^{q^3+1} \in \F_{q^{3}}\setminus\{0,1\}$ and $A=\frac{-\alpha^{q^3+1}}{1-\alpha^{q^3+1}}$ is such that Equation \eqref{eq:Y} admits either one root over $\F_q$ or two roots in $\F_{q^2}\setminus\F_q$. Let $\overline{Y}$ be a root of \eqref{eq:Y} and let $\overline{T}=A-\overline{Y}$. Then $(\overline{Y},\overline{T})$ is a solution of \eqref{eq:17} and choosing $\overline{x}=\frac{\overline{Y}}{\alpha^{1+q^2+q^4}}$ and $\overline{z}=\frac{A-\overline{Y}}{\alpha}$, we get that \eqref{eq:xz} is satisfied if we can find $b_0$ such that \[\overline{z}=b_0^{1+q+q^2} \quad \text{and} \quad \overline{x}=b_0^{1+q^2+q^4},\]
i.e. if we can find $b_0\in \F_{q^6}^*$ such that
\[ b_0^{1+q+q^2}=\frac{A-\overline{Y}}\alpha \quad \text{and} \quad b_0^{1+q^2+q^4}=\frac{\overline{Y}}{\alpha^{1+q^2+q^4}}. \]
Our aim is to prove the existence of a such $b_0 \in \F_{q^6}^*$.
First, we observe that
\begin{equation}\label{eq:x=z}
\overline{x}=\overline{z}^{q^2-q+1}.
\end{equation}
Indeed, it is equivalent to
\[ \frac{\overline{Y}}{\alpha^{1+q^2+q^4}}=\left(\frac{A-\overline{Y}}{\alpha}\right)^{q^2-q+1}, \]
i.e.
\[ \overline{Y}A^q-\overline{Y}^{q+1}=\frac{A^q}{A^q-1}A^{q^2+1}-\frac{A^q}{A^q-1}[\overline{Y}^2-(A+A^{q^2})\overline{Y}], \]
which results to be verified because of \eqref{eq:Y}.

Since $\overline{x}\in \F_{q^2}$ and since
\[ \overline{z}^{\frac{q^6-1}{1+q+q^2}}=\overline{z}^{(q^3+1)\frac{q^3-1}{1+q+q^2}}=(\overline{x}^{q+1})^{q-1}=1, \]
there exist $a,b \in \F_{q^6}^*$ such that
\[ \overline{x}=a^{1+q^2+q^4} \quad \text{and} \quad \overline{z}=b^{1+q+q^2}. \]
By \eqref{eq:x=z}, it follows that $b=a\eta$, with $\N_{q^6/q^2}(\eta)=1$, i.e. $\overline{x}=b^{1+q^2+q^4}$.
Therefore, System \eqref{eq:xz} is satisfied for $b_0=b$ and hence, the polynomial $f(x)=-x+b_0x^{q^5}+b_1 x^{q^2}$, where $b_0=b$ and $b_1=\alpha b$, has kernel with dimension $2$.
\end{proof}

\begin{remark}
With the above notation, $b_0$ mentioned in 2. Theorem \ref{thm:binq6} is a root of
\[ x^{q^2+q+1}=\frac{A-Y}{\alpha}, \]
where $Y$ is a root of \eqref{eq:Y}, because of  the second equation of \eqref{eq:TA}.
\end{remark}

Hence, we have if and only if conditions on the coefficients of $f(x)$ determining its number of roots.

\begin{theorem}\label{th:condt=2n=3}
Let
\[f(x)=-x+b_0x^{q^5}+b_1x^{q^2} \in \mathcal{L}_{6,q}\]
then
\begin{enumerate}
  \item $\dim_{\F_q} \ker f \leq 2$;
  \item $\dim_{\F_q}\ker\,f=2$ if and only if conditions of Theorem \ref{thm:binq6} are satisfied;
  \item $f$ is invertible if and only if
  \begin{small}
        \[ [b_1^{q^2} \left(b_0^{q^4} b_1^{q^3}+b_0 b_1^{q^4}\right)+b_0^{q^2} \left(b_0^{q+1}+b_1^{q^3+q}\right)]^{q^3+1} \neq [b_1^{q^2} \left(b_0^{q^4+q^3}+b_1^{q^4+1}\right)+b_0^{q^2} \left(b_0^{q^3} b_1^q+b_1 b_0^q\right)-1]^{q^3+1}; \]
  \end{small}
  \item $\dim_{\F_q} \ker f=1$ in the remaining cases.
\end{enumerate}
In particular, if $\dim_{\F_q}\ker\,f=2$ then $\N_{q^3/q}(b_0^{q^3+1}-b_1^{q^3+1})=1$.
\end{theorem}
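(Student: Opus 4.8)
The plan is to obtain the four items by specialising the machinery already developed for this family to the parameters at hand, namely $n=3$, $t=2$, $\sigma=\tau_{q^5}$, so that $\tau=\tau_q$, $s=1$, and
\[ D=\begin{pmatrix} b_0 & b_1 \\ b_1^{q^3} & b_0^{q^3}\end{pmatrix}, \qquad J=\begin{pmatrix} 0 & 1 \\ 1 & 0\end{pmatrix}. \]
Item 1 is immediate from Theorem \ref{main} with $t=2$. Item 2 is nothing more than a restatement of Theorem \ref{thm:binq6}: part 1 of that theorem supplies the necessity and part 2 the realisability of $\dim_{\F_q}\ker f=2$ in terms of the invariants $\alpha=b_1/b_0$ and $A$, so here one simply invokes it.

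The substance is Item 3. By Corollary \ref{corollary:main}, $f$ is invertible precisely when $\det\!\big(D^{\tau^{2}}D^{\tau}D-J\big)\neq0$. Write $M=D^{\tau^{2}}D^{\tau}D$. The key point, already exploited in the proof of Theorem \ref{main1} and in Lemma \ref{tcond}, is that $M$ is the restricted Dickson matrix of a $q^{3}$-polynomial and is therefore autocirculant; hence $M$ is determined by its first row, which by the recursion behind \eqref{condt=2n=3} is exactly $(P_{0,3},P_{1,3})$. Consequently
\[ M-J=\begin{pmatrix} P_{0,3} & P_{1,3}-1 \\ P_{1,3}^{q^3}-1 & P_{0,3}^{q^3}\end{pmatrix}, \qquad \det(M-J)=P_{0,3}^{\,q^3+1}-(P_{1,3}-1)^{q^3+1}. \]
It then remains to expand $P_{0,3}$ and $P_{1,3}$ from \eqref{condt=2n=3} and to verify that $P_{0,3}$ coincides with the first bracket and $P_{1,3}-1$ with the second bracket of Item 3; this is a routine monomial-by-monomial comparison. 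Thus $f$ is invertible iff the displayed inequality holds, and Item 4 follows immediately: by Item 1 we have $\dim_{\F_q}\ker f\in\{0,1,2\}$, while Items 2 and 3 isolate the values $2$ and $0$, leaving $1$ in all remaining cases.

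Finally, for the concluding norm identity I would take determinants in the maximal-kernel equation $M=J$ (equivalently \eqref{matrixconditiont=2}). Using $\det(D^{\tau^i})=(\det D)^{q^i}$ together with $\det D=b_0^{q^3+1}-b_1^{q^3+1}\in\F_{q^3}$, the left-hand side collapses to $\N_{q^3/q}(b_0^{q^3+1}-b_1^{q^3+1})$, while the right-hand side is $\det J$, giving the norm condition of the final clause. The only genuinely delicate steps in the whole argument are the autocirculant reduction of the triple product $M$ to its first row $(P_{0,3},P_{1,3})$ and the careful tracking of the sign of $\det J$ in this last identity; everything else is assembly of already-proved statements.
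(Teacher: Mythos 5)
Your assembly of items 1--4 is correct and is exactly the proof the paper intends: the paper states this theorem without a proof of its own, as the $n=3$ analogue of Theorem \ref{t=2n=2class}, to be read off from Theorem \ref{main} (item 1), Theorem \ref{thm:binq6} (item 2), Corollary \ref{corollary:main} (item 3) and elimination (item 4). Your reduction of item 3 checks out in detail: $M=D^{\tau^{2}}D^{\tau}D$ is the Dickson matrix of the $q^3$-polynomial $\overline{G}$ from the proof of Theorem \ref{main1}, hence autocirculant with first row $(P_{0,3},P_{1,3})$ by the recursion \eqref{eq:DDtau}, and since $(P_{1,3}-1)^{q^3}=P_{1,3}^{q^3}-1$ one indeed has $\det(M-J)=P_{0,3}^{q^3+1}-(P_{1,3}-1)^{q^3+1}$; a monomial-by-monomial expansion confirms that $P_{0,3}$ and $P_{1,3}-1$ from \eqref{condt=2n=3} are the two brackets displayed in item 3.

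The genuine problem is the final norm clause, where you assert that taking determinants in $M=J$ ``gives the norm condition of the final clause'' while explicitly deferring the sign of $\det J$ --- but that sign is precisely where the claim breaks. The computation you describe yields
\[ \N_{q^3/q}\bigl(b_0^{q^3+1}-b_1^{q^3+1}\bigr)=(\det D)^{q^2+q+1}=\det J=-1, \]
(note $\det D\in\F_{q^3}$ since $(\det D)^{q^3}=\det D$), which agrees with the paper's own Proposition in Section 6 (there, for general $t=2$, the conclusion is $\N_{q^n/q}(b_0^{q^n+1}-b_1^{q^n+1})=-1$) but \emph{not} with the clause as printed in the theorem, which asserts the value $1$. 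Since $q^2+q+1$ is odd, for odd $q$ one has $\N_{q^3/q}(-x)=-\N_{q^3/q}(x)$, so $-1$ and $1$ genuinely differ and your argument actually proves $\N_{q^3/q}(b_1^{q^3+1}-b_0^{q^3+1})=1$; only for even $q$ do the two statements coincide. So either the theorem's final clause carries a sign slip (the order of $b_0,b_1$ inside the norm) and your derivation is the correct one, or you must produce a different derivation --- but as written, claiming the determinant computation delivers the printed ``$=1$'' is false in odd characteristic, and this is the one step of your proposal that does not survive the sign-tracking you yourself flagged as delicate.
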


\section{Applications to linear sets}\label{sec:application}

In this section we will explore some possible applications of our results to linear sets.

\smallskip

Let $\Lambda=\PG(W,\F_{q^m})=\PG(1,q^m)$, where $W$ is a vector space of dimension $2$ over $\F_{q^m}$.
A point set $L$ of $\Lambda$ is said to be an \emph{$\F_q$-linear set} of $\Lambda$ of rank $k$ if it is
defined by the non-zero vectors of a $k$-dimensional $\F_q$-vector subspace $U$ of $W$, i.e.
\[L=L_U=\{\la {\bf u} \ra_{\mathbb{F}_{q^m}} \colon {\bf u}\in U\setminus \{{\bf 0} \}\}.\]
We say that two linear sets $L_U$ and $L_W$ of $\Omega=\PG(1,q^m)$ are $\mathrm{P}\Gamma \mathrm{L}$-\emph{equivalent} (or simply \emph{projectively equivalent}) if there exists $\varphi \in \mathrm{P}\Gamma \mathrm{L} (2,q^m)$ such that $\varphi(L_U)=L_W$.

\smallskip

We start by pointing out that if the point $\langle (0,1) \rangle_{\F_{q^m}}$ is not contained in the linear set $L_U$ of rank $m$ of $\PG(1,q^m)$ (which we can always assume after a suitable projectivity), then $U=U_f:=\{(x,f(x))\colon x\in \F_{q^m}\}$ for some $q$-polynomial $\displaystyle f(x)=\sum_{i=0}^{m-1}a_ix^{q^i}\in \tilde{\mathcal{L}}_{m,q}$. In this case we will denote the associated linear set by $L_f$.
Also, recall that the \emph{weight of a point} $P=\langle \mathbf{u} \rangle_{\F_{q^m}}$ is $w_{L_U}(P)=\dim_{\F_q}(U\cap\langle \mathbf{u} \rangle_{\F_{q^m}})$.

\smallskip

Let $x_i$ be the number of points of weight $i$ w.r.t. the linear set $L_U\subseteq \PG(1,q^m)$ of rank $k>0$, then
\begin{equation}\label{eq:rel1}
| L_U |= x_1+\ldots+x_m,
\end{equation}
and
\begin{equation}\label{eq:rel2}
x_1+(q+1)x_2+\ldots+(q^{m-1}+\ldots+q+1)x_m=q^{k-1}+\ldots+q+1,
\end{equation}
see e.g. \cite[Proposition 1.1]{Polverino}.

\subsection{A class of linear sets with small weight spectrum}

Consider the following linear set in $\PG(1,q^{nt})$
\begin{equation}\label{formlinearset}
\mathcal{L}_F:=\{\langle (x,F(x))\rangle_{\F_{q^{nt}}} \colon x \in \F_{q^{nt}} \},
\end{equation}
with
\[F(x)=a_0x^{\sigma}+a_1x^{\sigma q^n}+a_2x^{\sigma q^{2n}}+\ldots+a_{t-1}x^{\sigma q^{n(t-1)}},\]
$\sigma$ a generator of the Galois group $\mathrm{Gal}(\F_{q^n}\colon\F_q)$.
Note that $F(x)=G\circ \sigma$, where $G(x)=a_0x+a_1x^{ q^n}+a_2x^{ q^{2n}}+\ldots+a_{t-1}x^{ q^{n(t-1)}}$.

\begin{theorem}\label{thm:weight}
Let $P$ be a point in $\mathcal{L}_F \subseteq \PG(1,q^{nt})$, then we have that
\begin{itemize}
  \item $1\leq w_{\mathcal{L}_F}(P)\leq t-\dim_{\F_{q^n}} \ker G$, if $P\neq \langle (1,0)\rangle_{\F_{q^{nt}}}$;
  \item $w_{\mathcal{L}_F}(\langle (1,0)\rangle_{\F_{q^{nt}}})=\dim_{\F_q} \ker G=n\cdot \dim_{\F_{q^n}} \ker G$.
\end{itemize}
\end{theorem}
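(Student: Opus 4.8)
The statement concerns the weight of points in the linear set $\mathcal{L}_F$, where $F = G \circ \sigma$ with $G$ an $\F_{q^n}$-linear map. The plan is to compute the weight of a point $P = \langle (u, v) \rangle_{\F_{q^{nt}}}$ directly from its definition $w_{\mathcal{L}_F}(P) = \dim_{\F_q}(U_F \cap \langle (u,v)\rangle_{\F_{q^{nt}}})$, splitting into the two cases according to whether $P$ is the special point $\langle (1,0)\rangle$.

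\emph{The special point.} First I would handle $P = \langle (1,0)\rangle_{\F_{q^{nt}}}$. A vector $(x, F(x)) \in U_F$ lies in $\langle (1,0)\rangle_{\F_{q^{nt}}}$ exactly when $F(x) = 0$, so the weight equals $\dim_{\F_q} \ker F$. Since $\sigma$ is a bijection, $\ker F = \ker(G \circ \sigma) = \sigma^{-1}(\ker G)$, and as $\sigma$ is $\F_q$-semilinear (indeed $\F_q$-linear, being a field automorphism fixing $\F_q$) it follows that $\dim_{\F_q}\ker F = \dim_{\F_q}\ker G$. Finally, because $G$ is $\F_{q^n}$-linear, its kernel is an $\F_{q^n}$-subspace, giving $\dim_{\F_q}\ker G = n\cdot\dim_{\F_{q^n}}\ker G$. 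This settles the second bullet.

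\emph{The generic points.} For $P = \langle (1, c)\rangle_{\F_{q^{nt}}}$ with $c \in \F_{q^{nt}}$ (every point other than $\langle(0,1)\rangle$, which is not on $\mathcal{L}_F$, and other than the special point has this form after scaling), a vector $(x, F(x))$ lies in $P$ precisely when $F(x) = cx$, i.e. $x$ lies in the kernel of the $\F_q$-linear map $f_c := F - c\cdot\mathrm{id}$. Thus $w_{\mathcal{L}_F}(P) = \dim_{\F_q}\ker f_c$. The lower bound $w \geq 1$ holds since $P \in \mathcal{L}_F$ means some nonzero $x$ maps to $P$. For the upper bound, I would rescale to put $f_c$ into the normalized form $-x + b_0 x^\sigma + \ldots + b_{t-1}x^{\sigma q^{n(t-1)}}$ of Theorem~\ref{main}: dividing by $-c$ (note $c \neq 0$ for a generic point, and the case $c=0$ is the special point already treated) turns $f_c$ into exactly a polynomial of Form \eqref{form} whose associated $q^n$-polynomial is $-c^{-1}G$, which has the same $\F_{q^n}$-kernel dimension as $G$. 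Then Corollary~\ref{cor:Gnoinv} yields $\dim_{\F_q}\ker f_c \leq t - \dim_{\F_{q^n}}\ker(-c^{-1}G) = t - \dim_{\F_{q^n}}\ker G$, which is the claimed bound.

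\emph{Main obstacle.} The delicate point is the bookkeeping of the rescaling: one must verify that multiplying $f_c$ by a nonzero constant does not change the kernel dimension (clear, since scaling is a bijection) and, more importantly, that after normalization the associated $q^n$-polynomial is a scalar multiple of $G$ so that Corollary~\ref{cor:Gnoinv} applies with $\dim_{\F_{q^n}}\ker G$ unchanged. I also need to confirm that $\langle(0,1)\rangle \notin \mathcal{L}_F$ so that every point is covered by one of the two cases, and to remember that the weight of $\langle(1,0)\rangle$ could in principle exceed the generic bound --- indeed it equals $n\cdot\dim_{\F_{q^n}}\ker G$, which is why this point is singled out. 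The rest is routine linear algebra.
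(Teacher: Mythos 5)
Your proposal is correct and follows essentially the same route as the paper: identify the weight of $\langle(1,m)\rangle_{\F_{q^{nt}}}$ with $\dim_{\F_q}\ker(F - m\,\mathrm{id})$, treat $m=0$ via the $\F_{q^n}$-linearity of $G$, and for $m\neq 0$ normalize to the form of Theorem~\ref{main} and invoke Corollary~\ref{cor:Gnoinv} with associated $q^n$-polynomial a nonzero scalar multiple of $G$ (the paper uses $f(x)=\frac{1}{m}(-mx+F(x))$, i.e.\ division by $m$ rather than your $-c$, but this sign discrepancy is immaterial since $\ker g=\ker(-g)$). The only cosmetic difference is that you spell out the lower bound $w_{\mathcal{L}_F}(P)\geq 1$ and the identity $\dim_{\F_q}\ker F=\dim_{\F_q}\ker G$, which the paper leaves implicit.
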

\begin{proof}
Since the point $\langle (0,1) \rangle_{\F_{q^{nt}}}\notin \mathcal{L}_F$, we may assume that $P=\langle(1,m)\rangle_{\F_{q^{nt}}}$ with $m \in \F_{q^{nt}}$.
We have that $w_{\mathcal{L}_F}(\langle (1,m)\rangle_{\F_{q^{nt}}})=i$, for some $m \in \F_{q^{nt}}$, if and only if
\begin{equation}\label{F(x)=mx}
F(x)=mx
\end{equation}
has $q^i$ roots.
If $m=0$, then
\[ w_{\mathcal{L}_F}(\langle (1,0)\rangle_{\F_{q^{nt}}})=\dim_{\F_q} \ker F(x)=\dim_{\F_q} \ker G(x)=nh, \]
with $h=\dim_{\F_{q^n}} \ker G$.
If $m\neq 0$, then we may evaluate the kernel of $f(x)=\frac{1}{m}(-mx+F(x))$, whose dimension will give the value of $w_{\mathcal{L}_F}(\langle (1,m)\rangle_{\F_{q^{nt}}})$. Since $f(x)$ is as in Corollary \ref{cor:Gnoinv}, it follows that
\[ w_{\mathcal{L}_F}(\langle (1,m)\rangle_{\F_{q^{nt}}})=\dim_{\F_q} \ker f(x)\leq t-\dim_{\F_{q^n}} \ker G. \]
\end{proof}

In particular, choices of $G$ having large dimension of the kernel imply that the associated linear set has one point with large weight and the others have small weight. For instance, choosing $G$ as the trace function we get the club defining a particular type of KM-arc.

A \emph{KM-arc of type} $s$ in $\PG(2,q)$ is a set of $q+s$ points of type $(0,2,s)$, i.e. each line of $\PG(2,q)$ meets such a set in either 0, 2 or $s$ points.
The authors in \cite{KM} prove in particular that if a KM-arc of type $s$, with $2<s<q$, in $\PG(2,q)$ exists, then $q$ is even and $s$ is a divisor of $q$.
In \cite{DeBoeckVdV2016}, De Boeck and Van de Voorde established a connection between KM-arcs and $i$-clubs.
An $i$-\emph{club} of rank $m$ in $\PG(1,q^m)$ is an $\F_{q}$-linear set in $\PG(1,q^m)$ such that one point has weight $i$ and all the others have weight one.
The first example of KM-arc presented in \cite{KM} can be described by the following $i$-club, as proved in \cite{DeBoeckVdV2016}:
let $m=nt$, $q=2$, $i=n(t-1)$, $x^\sigma=x^{q^s}$ with $\gcd(s,n)=1$ then the linear set
\begin{equation}\label{i-clubKM}
L_{KM}:=\{\langle (x,L(x)) \rangle_{\F_{2^{nt}}} \colon x \in \F_{2^{nt}}^*\},
\end{equation}
with $L(x)=\mathrm{Tr}_{2^{nt}/2^n}\circ \sigma$, is an $i$-club of $\PG(1,2^{nt})$ defining the example of \cite{KM}, see \cite[Theorem 3.2]{DeBoeckVdV2016}.

\smallskip

Choosing $G(x)=\mathrm{Tr}_{2^{nt}/2^n}$, Theorem \ref{thm:weight} implies again that the linear set \eqref{i-clubKM} is an $i$-club.
In the case in which we choose $G$ such that $\dim_{\F_{q^n}} \ker G =t-2$, setting $F=G\circ\sigma$, Theorem \ref{thm:weight} implies that
\begin{itemize}
  \item $w_{\mathcal{L}_F}(\langle (1,0)\rangle_{\F_{q^{nt}}})=n(t-2)$;
  \item $1\leq w_{\mathcal{L}_F}(P)\leq 2$, for each $P \in \mathcal{L}_F$ and $P\neq \langle (1,0)\rangle_{\F_{q^{nt}}}$.
\end{itemize}
So, this means that in such a case the linear set $\mathcal{L}_F$ is very close to be an $n(t-2)$-club for any choice of $a_0,\ldots,a_{t-1}\in \F_{q^{nt}}$.
It would be of some interest to determine (whether there exist) choices of $a_0,\ldots,a_{t-1}\in \F_{q^{nt}}$ such that $w_{\mathcal{L}_F}(P)<2$ for each point $P\neq \langle (1,0)\rangle_{\F_{q^{nt}}}$, i.e. such that $\mathcal{L}_F$ is an $n(t-2)$-club.

\smallskip

One of the most studied classes of linear sets of the projective line, especially because of their applications (see e.g. \cite{Polverino,Sheekey2016}), is the family of maximum scattered linear sets. A {\it maximum scattered} $\F_q$-linear set of $\PG(1,q^m)$ is an $\F_q$-linear set of rank $m$ of $\PG(1,q^m)$ of size $(q^m-1)/(q-1)$, or equivalently a linear set of rank $m$ in $\PG(1,q^m)$ whose points have weight one.
If $L_f$ is a maximum scattered linear set in $\PG(1,q^m)$, we say also that $f$ is a \emph{scattered polynomial}.
The known scattered polynomials of $\F_{q^m}$ are
\begin{enumerate}
  \item $f_1(x)=x^{q^s}\in \tilde{\mathcal{L}}_{m,q}$, with $\gcd(s,m)=1$, \cite{BL2000};
  \item $f_2(x)=\alpha x^{q^s}+x^{q^{m-s}}\in\tilde{\mathcal{L}}_{m,q}$, with $m\geq 4$, $\gcd(s,m)=1$, $\N_{q^m/q}(\alpha) \notin\{0,1\}$, \cite{LMPT2015,LP2001,Sheekey2016};
  \item $f_3(x)= x^{q^s}+\alpha x^{q^{s+\frac{m}2}}\in\tilde{\mathcal{L}}_{m,q}$, $m \in \{6,8\}$, $\gcd(s,\frac{m}2)=1$ and some conditions on $\alpha$, \cite{CMPZ};
  \item $f_4(x)=x^q+x^{q^3}+\alpha x^{q^5}\in \tilde{\mathcal{L}}_{6,q}$, $q$ odd and $\alpha^2+\alpha=1$, \cite{CsMZ2018,MMZ};
  \item $f_5(x)=h^{q-1}x^q-h^{q^2-1}x^{q^2}+x^{q^4}+x^{q^5}\in \tilde{\mathcal{L}}_{6,q}$, $h\in\F_{q^6}$, $h^{q^3+1}=-1$ and $q$ odd, \cite{BZZ,ZZ}.
\end{enumerate}

Family \eqref{formlinearset} contains most of the known families of maximum scattered linear sets of the line.

\begin{remark}
Let $f_1,f_2,f_3$ and $f_4$ be the polynomials defined above.
\begin{itemize}
  \item Choosing $F(x)=x^\sigma$, with $\sigma$ a generator of $\mathrm{Gal}(\F_{q^{nt}}\colon\F_q)$, we obtain the polynomial $f_1$.
  \item Let $n=2$, $t\geq 1$, $1\leq \ell< 2t$, $\gcd(\ell,2t)=1$ and $x^\sigma=x^{q^{t-\ell}}$. Then $F(x)=\alpha x^\sigma+ x^{\sigma q^{2\ell}}$ coincides with $f_2$ when $m=2t$ and $\N_{q^{2t}/q}(\alpha)\notin\{0,1\}$.
  \item Let $t=2$ and $\sigma$ be a generator of $\mathrm{Gal}(\F_{q^{n}}\colon \F_q)$. Then $F(x)= x^\sigma +\alpha x^{\sigma q^n}$ is, clearly, of type $f_3$.
  \item Let $t=3$, $n=2$ and $x^\sigma=x^q$. Then $F(x)=x^\sigma+x^{q^2 \sigma}+\alpha x^{q^4 \sigma}$, with $\alpha^2+\alpha=1$, coincides with $f_4$.
\end{itemize}
\end{remark}

In \cite[Theorem 7.1]{CMPZ}, the authors prove that for $n=6$ and for each $q>4$ it is possible to find $\alpha \in \F_{q^2}$ such that $f_3$ is a scattered polynomial, without giving the explicit conditions. As a consequence of Theorem \ref{th:condt=2n=3} we are able to determine the if and only if condition on $\alpha$ such that $f_3$ results to be a scattered polynomial.

\begin{theorem}\label{thm:7.3}
The $\F_q$-linear set
\[L_{f_3}=\{\langle (x,x^{q^5}+\alpha x^{q^{2}}) \rangle_{\F_{q^6}} \colon x \in \F_{q^6}^*\}\]
with $\N_{q^6/q^3}(\alpha)\neq0,1$,
is scattered if and only if  the equation
\begin{equation}\label{eq:YLS}
Y^2-(\mathrm{Tr}_{q^3/q}(A)-1)Y+\mathrm{N}_{q^3/q}(A)=0,
\end{equation}
with $\displaystyle A=\frac{-\alpha^{q^3+1}}{1-\alpha^{q^3+1}}$, admits two roots over $\F_q$(\footnote{Denoting by $\beta=-\mathrm{Tr}_{q^3/q}(A)+1$ and $\gamma=\mathrm{N}_{q^3/q}(A)$, this happens when $q$ is odd and $\beta^2-4\gamma$ is a square over $\F_q$ or when $q$ is even and $\tr_{q/2}(\gamma/\beta^2)=0$.}).
In particular, there always exists such a $\alpha$ for any $q>2$.
\end{theorem}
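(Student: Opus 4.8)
The plan is to recognize $L_{f_3}$ as the instance of the linear set $\mathcal{L}_F$ of \eqref{formlinearset} with $t=2$, $n=3$ and $F(x)=x^{q^5}+\alpha x^{q^2}$, and then to reduce the scattered property to the \emph{non-existence} of a point of weight $2$, which Theorem \ref{thm:binq6} controls through $\alpha$ alone. First I would set up the correspondence: writing $\sigma=\tau_{q^5}$ (a generator of $\mathrm{Gal}(\F_{q^3}\colon\F_q)$, since $5\equiv 2\pmod 3$ and $\gcd(2,3)=1$) and $G(x)=x+\alpha x^{q^3}$, one checks $F=G\circ\sigma$. Because $\N_{q^6/q^3}(\alpha)\neq 0,1$, the $q^3$-linear map $G$ of $\F_{q^6}=V(2,q^3)$ is invertible: $G(x)=0$ with $x\neq 0$ forces $x^{q^3-1}=-1/\alpha$, solvable only when $\N_{q^6/q^3}(-1/\alpha)=1$, i.e. $\N_{q^6/q^3}(\alpha)=1$. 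Hence $\dim_{\F_{q^3}}\ker G=0$, so Theorem \ref{thm:weight} gives $w_{\mathcal{L}_F}(P)\in\{1,2\}$ for every point $P\neq\langle(1,0)\rangle_{\F_{q^6}}$, while $\langle(1,0)\rangle_{\F_{q^6}}\notin\mathcal{L}_F$. Therefore $L_{f_3}$ is scattered if and only if it has no point of weight $2$.

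Next I would translate ``weight $2$'' into the language of Section \ref{sec:criteria}. A point $\langle(1,m)\rangle_{\F_{q^6}}$ with $m\neq 0$ has weight $2$ exactly when $\dim_{\F_q}\ker f=2$ for $f(x)=-x+b_0x^{q^5}+b_1x^{q^2}$ with $b_0=1/m$ and $b_1=\alpha/m$; note $b_1/b_0=\alpha$, so the hypotheses of Theorem \ref{thm:binq6} depend only on $\alpha$, not on $m$. Combining its two parts (Part~1, and Part~2, the latter applicable because $\alpha^{q^3+1}=\N_{q^6/q^3}(\alpha)\in\F_{q^3}\setminus\{0,1\}$), I obtain that a weight-$2$ point exists for some $m\neq 0$ if and only if \eqref{eq:YLS} has a double root in $\F_q$ or is irreducible over $\F_q$. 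Since \eqref{eq:YLS} is a monic quadratic over $\F_q$, the only remaining case is that it has two distinct roots in $\F_q$; thus $L_{f_3}$ is scattered if and only if \eqref{eq:YLS} splits into two distinct linear factors over $\F_q$, which is the asserted equivalence (the two parity subcases being the content of the footnote).

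For the existence of a scattered $\alpha$ when $q>2$, I would first observe that as $\alpha$ runs over the elements with $\N_{q^6/q^3}(\alpha)\notin\{0,1\}$, the quantity $A=\frac{-\alpha^{q^3+1}}{1-\alpha^{q^3+1}}$ runs over all of $\F_{q^3}\setminus\{0,1\}$, since the relative norm $\F_{q^6}^*\to\F_{q^3}^*$ is onto and $c\mapsto c/(c-1)$ is a bijection of $\F_{q^3}\setminus\{0,1\}$. So it suffices to produce one $A\in\F_{q^3}\setminus\{0,1\}$ for which $Y^2-(\mathrm{Tr}_{q^3/q}(A)-1)Y+\N_{q^3/q}(A)$ has two distinct roots in $\F_q$; note the product of these roots is $\N_{q^3/q}(A)\neq 0$, so both roots are automatically nonzero. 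This already explains the exclusion of $q=2$: over $\F_2$ the only pair of distinct elements is $\{0,1\}$, which cannot be a pair of nonzero roots. For $q>2$ I would exhibit such an $A$. Trying $A\in\F_q$, the discriminant collapses (for $q$ odd) to $-(A-1)^2(4A-1)$, so two distinct $\F_q$-roots amount to $1-4A$ being a nonzero square, which is solvable for all odd $q\geq 5$; for $q$ even the analogous Artin--Schreier condition $\mathrm{Tr}_{q/2}\!\big(A^3/(1+A)^2\big)=0$ is solvable in $\F_q\setminus\{0,1\}$ for all but the smallest even fields. The finitely many exceptional small fields ($q=3,4$) I would settle by displaying an explicit $A\in\F_{q^3}\setminus\F_q$; for instance, for $q=3$ a root of an irreducible cubic over $\F_3$ of trace $1$ and norm $2$ yields the quadratic $Y^2+2$, whose roots $1,2$ are distinct in $\F_3$.

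I expect the structural reduction to be routine once Theorems \ref{thm:weight} and \ref{thm:binq6} are in hand; the real work is the uniform existence claim. The main obstacle is covering every $q>2$ simultaneously: the convenient choice $A\in\F_q$ fails precisely for the two smallest fields $q=3,4$, so one must either run a careful count of the $A\in\F_{q^3}$ with prescribed trace and norm --- equivalently, of the irreducible cubics $x^3-(y_1+y_2+1)x^2+ex-y_1y_2$ attached to distinct nonzero $y_1,y_2\in\F_q$ --- or, more economically, handle those two cases by hand with explicit elements of $\F_{q^3}\setminus\F_q$ as above.
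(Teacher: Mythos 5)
Your proof is correct, and its first half is essentially the paper's own argument: the paper likewise reduces scatteredness to the statement that $\dim_{\F_q}\ker\bigl(-x+\tfrac{1}{m}x^{q^5}+\tfrac{\alpha}{m}x^{q^2}\bigr)\leq 1$ for all $m\in\F_{q^6}^*$, notes that $b_1/b_0=\alpha$ is independent of $m$, and then applies the two parts of Theorem \ref{thm:binq6} in both directions exactly as you do (the paper cites Theorem \ref{th:condt=2n=3} for the bound $\dim_{\F_q}\ker f\leq 2$ where you instead invoke Theorem \ref{thm:weight} after checking that $G(x)=x+\alpha x^{q^3}$ is invertible; the two are interchangeable). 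Where you genuinely diverge is the existence of a scattered $\alpha$ for every $q>2$. The paper settles this in one stroke: pick any monic quadratic over $\F_q$ with two distinct nonzero roots (possible precisely when $q>2$) and invoke Katz's theorem \cite{Katz} (see also \cite{Moisio}) to obtain $A\in\F_{q^3}$, $A\neq 1$, with the prescribed trace and norm. You instead construct $A$ by hand, after correctly observing that $A=\N_{q^6/q^3}(\alpha)/(\N_{q^6/q^3}(\alpha)-1)$ sweeps out all of $\F_{q^3}\setminus\{0,1\}$: for $A\in\F_q$ the discriminant of \eqref{eq:YLS} indeed factors as $-(A-1)^2(4A-1)$, so odd $q\geq 5$ follows from a count of nonzero squares (for $q=5$, $A=3$ works), and the small fields need separate treatment. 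Your approach is more elementary and self-contained -- it avoids the character-sum estimates behind Katz's bound -- and it even produces explicit scattered examples, at the cost of a case split; the paper's is uniform and short but leans on a nontrivial external theorem. For completeness, your deferred claims do all go through: in even characteristic, substituting $u=A+1$ and using $\mathrm{Tr}_{q/2}(x^2)=\mathrm{Tr}_{q/2}(x)$ shows $\mathrm{Tr}_{q/2}\bigl(A^3/(A+1)^2\bigr)=\mathrm{Tr}_{q/2}(A)$, so your Artin--Schreier condition is solvable in $\F_q\setminus\{0,1\}$ for every even $q\geq 8$ and fails exactly for $q=4$; your $q=3$ patch is realized by $x^3+2x^2+1$, which is irreducible over $\F_3$ with trace $1$ and norm $2$, giving $Y^2+2$ with distinct roots $1,2$; and the $q=4$ example you promise but do not display exists as well, e.g.\ a root $A$ of $x^3+\omega x+1$ over $\F_4=\F_2(\omega)$ (irreducible, trace $0$, norm $1$), which yields $Y^2+Y+1$ with distinct roots $\omega,\omega^2\in\F_4$.
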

\begin{proof}
The linear set $L_{f_3}$ is scattered if and only if $w_{L_{f_3}}(\langle (1,m)\rangle_{\F_{q^6}})\leq 1$ for each $m \in \F_{q^6}^*$, since the point $\langle (1,0)\rangle_{\F_{q^6}}\notin L_{f_3}$. This is equivalent to require that for each $m \in \F_{q^6}^*$
\[ \dim_{\F_q} \ker \left( -x+\frac{1}m x^{q^5}+\frac{\alpha }m x^{q^2} \right) \leq 1. \]
By Theorem \ref{th:condt=2n=3}, it follows that $\dim_{\F_q} \ker \left( -x+\frac{1}m x^{q^5}+\frac{\alpha }m x^{q^2} \right) \leq 2$ and clearly, if $A$ is as in the statement, by Theorem \ref{thm:binq6} we have that
\[\dim_{\F_q} \ker \left( -x+\frac{1}m x^{q^5}+\frac{\alpha}m x^{q^2} \right) \leq 1\]
and hence $L_{f_3}$ is scattered.
Now, suppose that \eqref{eq:YLS} admits either one root over $\F_q$ or two roots in $\F_{q^2}\setminus\F_q$, by the second part of Theorem \ref{thm:binq6} there exists $m\in \F_{q^6}^*$ such that
\[\dim_{\F_q} \ker \left( -x+\frac{1}m x^{q^5}+\frac{\alpha }m x^{q^2} \right) =2,\]
i.e. $w_{L_{f_3}}(\langle (1,m)\rangle_{\F_{q^6}})=2$ proving that $L_{f_3}$ is not scattered.
For the second part, let $Y^2+aY+b =0$ any equation over $\F_{q}$ admitting two roots over $\F_q$ with $b\neq 0$, then $q>2$ and by \cite{Katz} (see also \cite[Theorems 1.1 and 1.2]{Moisio}), there exists $A\in \F_{q^3}$ such that $\mathrm{Tr}_{q^3/q}(A)=-a+1$, $\mathrm{N}_{q^3/q}(A)=b$ and $A\neq1$.
Then, by the first part, for each $\alpha\in \F_{q^6}^*$ such that
\[ \N_{q^6/q^3}(\alpha)=\frac{A}{A-1}, \]
the linear set
\[L_{f_3}=\{\langle (x,x^{q^5}+\alpha x^{q^{2}}) \rangle_{\F_{q^6}} \colon x \in \F_{q^6}^*\}\]
is scattered.
\end{proof}

In \cite[Corollary 5.4]{CMPZ} the authors prove that the number of points of $L_{f_3}$ with weight two is a multiple of $q^2+q+1$. As a consequence of Theorem \ref{thm:binq6}, we can completely determine the number of such points and the cardinality of $L_{f_3}$.

\begin{corollary}
The $\F_q$-linear set
\[L_{f_3}=\{\langle (x,x^{q^5}+\alpha x^{q^{2}}) \rangle_{\F_{q^6}} \colon x \in \F_{q^6}^*\}\]
with $\N_{q^6/q^3}(\alpha)\neq0,1$, has $x_2$ points of weight two, where
\[ x_2=\left\{ \begin{array}{lll} 2(q^2+q+1) & \text{if}\quad \eqref{eq:YLS} \quad \text{has two roots over}\quad \F_{q^2}\setminus\F_q \\ q^2+q+1 & \text{if}\quad \eqref{eq:YLS} \quad \text{has one root over}\quad \F_q \\
0 & \text{otherwise}  \end{array} \right.. \]
In particular,
\[ |L_{f_3}|=\left\{ \begin{array}{lll} q^5+q^4-q^3-q^2-q+1 & \text{if}\quad \eqref{eq:YLS} \quad \text{has two roots over}\quad \F_{q^2}\setminus\F_q \\ q^5+q^4+1 & \text{if}\quad \eqref{eq:YLS} \quad \text{has one root over}\quad \F_q \\
\frac{q^6-1}{q-1} & \text{otherwise}  \end{array} \right..  \]
\end{corollary}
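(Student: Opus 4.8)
The plan is to count the weight-two points of $L_{f_3}$ by retracing the substitution machinery built in the proof of Theorem \ref{thm:binq6}, and then to read off the cardinality from the standard relations \eqref{eq:rel1} and \eqref{eq:rel2}. First I would record that every point of $L_{f_3}$ has weight one or two. We have $\langle(0,1)\rangle_{\F_{q^6}}\notin L_{f_3}$, and $\langle(1,0)\rangle_{\F_{q^6}}\in L_{f_3}$ only if $F(x)=x^{q^5}+\alpha x^{q^2}$ has a nonzero root; writing $y=x^{q^2}$ this reduces to $y^{q^3-1}=-\alpha$, which is solvable exactly when $\N_{q^6/q^3}(\alpha)=1$, excluded by hypothesis. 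Hence $F$ is a bijection and $\langle(1,0)\rangle_{\F_{q^6}}\notin L_{f_3}$, so by Theorem \ref{th:condt=2n=3} each point $\langle(1,m)\rangle_{\F_{q^6}}$ of $L_{f_3}$ has weight $\dim_{\F_q}\ker(-x+\tfrac{1}{m}x^{q^5}+\tfrac{\alpha}{m}x^{q^2})\in\{1,2\}$.

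Next I would count the $m\in\F_{q^6}^*$ (equivalently $b_0=1/m$, with $b_1=\alpha/m$ so that $b_1/b_0=\alpha$ is the fixed ratio of Theorem \ref{thm:binq6}) for which this kernel has dimension two. Such a $b_0$ produces, via $z=b_0^{1+q+q^2}$ and $Y=A-z\alpha$, a root $\overline{Y}$ of \eqref{eq:Y} that additionally satisfies the linear constraint $Y+Y^q=\tr_{q^3/q}(A)-1$ coming from the second line of \eqref{eq:17}. Since the two roots of \eqref{eq:Y} sum to $\tr_{q^3/q}(A)-1$, this extra constraint selects: both roots when \eqref{eq:Y} is irreducible over $\F_q$ (its roots are conjugate, hence each satisfies $Y+Y^q=\tr_{q^3/q}(A)-1$); the unique root when \eqref{eq:Y} has a double root in $\F_q$; and neither root when \eqref{eq:Y} has two distinct roots in $\F_q$ (there $2Y_i\neq Y_1+Y_2$), the last case being exactly the scattered situation of Theorem \ref{thm:7.3}.

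Conversely, for each admissible root $\overline{Y}$ I would invert the substitutions as in part~2 of Theorem \ref{thm:binq6}: setting $\overline{z}=(A-\overline{Y})/\alpha$, relation \eqref{eq:x=z} guarantees $\overline{z}^{(q^6-1)/(q^2+q+1)}=1$, so $\overline{z}$ lies in the image of the homomorphism $b_0\mapsto b_0^{1+q+q^2}$ of $\F_{q^6}^*$; since $\gcd(1+q+q^2,q^6-1)=q^2+q+1$, this fibre has exactly $q^2+q+1$ elements, each giving a distinct $m=1/b_0$ and hence a distinct weight-two point. As $\overline{Y}=A-b_0^{1+q+q^2}\alpha$ is recovered from $b_0$, distinct admissible roots produce disjoint fibres. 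Collecting the three cases yields $x_2\in\{2(q^2+q+1),\,q^2+q+1,\,0\}$ exactly as stated. Finally, since only weights one and two occur, \eqref{eq:rel2} gives $x_1+(q+1)x_2=\tfrac{q^6-1}{q-1}$ while \eqref{eq:rel1} gives $|L_{f_3}|=x_1+x_2$; subtracting yields $|L_{f_3}|=\tfrac{q^6-1}{q-1}-q\,x_2$, and substituting the three values of $x_2$ produces the three stated cardinalities.

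The main obstacle is the counting step, where one must (i) isolate the genuine constraint $Y+Y^q=\tr_{q^3/q}(A)-1$ that separates admissible roots of \eqref{eq:Y} from spurious ones, and (ii) verify that each admissible root contributes precisely $q^2+q+1$ pairwise-distinct points with no overlap between roots. Everything else is bookkeeping on top of the bijection already established in Theorem \ref{thm:binq6} together with the linear-set identities.
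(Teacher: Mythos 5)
Your proposal is correct and takes essentially the same route as the paper's own proof: you count the $m\in\F_{q^6}^*$ with two-dimensional kernel via the correspondence $b_0^{q^2+q+1}=(A-\overline{Y})/\alpha$ coming from Theorem \ref{thm:binq6} and the remark following it, and then deduce $|L_{f_3}|$ from the relations \eqref{eq:rel1} and \eqref{eq:rel2}. The only difference is that you make explicit some details the paper leaves implicit --- the constraint $Y+Y^q=\mathrm{Tr}_{q^3/q}(A)-1$ selecting the admissible roots of \eqref{eq:YLS}, the fibre size $q^2+q+1$ of $b_0\mapsto b_0^{1+q+q^2}$, and the disjointness of the fibres attached to distinct roots --- all of which check out.
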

\begin{proof}
The assertion follows by the previous result and from the last part of Theorem \ref{thm:binq6}. Indeed, the number of points with weight two corresponds to the number of $m \in \F_{q^6}^*$ such that
\[ \dim_{\F_q} \ker \left( -x+\frac{1}m x^{q^5}+\frac{\alpha}m x^{q^2} \right) =2, \]
i.e. with the number of solutions of
\[ x^{q^2+q+1}=\frac{A-Y}\alpha, \]
where $Y$ is a solution of \eqref{eq:YLS}.
The last part follows by the following relations
\[ |L_{f_3}|=x_1+x_2, \]
and
\[ x_1+(q+1)x_2=\frac{q^6-1}{q-1}, \]
where $x_1$ is the number of points having weight one w.r.t. $L_{f_3}$.
\end{proof}

\begin{remark}
In \cite{BCsM}, Bartoli, Csajb\'ok and Montanucci, independently and with different techniques, characterize scattered linear sets of shape $L_{f_3}$ obtaining the same conditions of Theorem \ref{thm:7.3}.
Also, they use such conditions to prove a conjecture posed in \cite{CMPZ} on the number of new maximum scattered subspaces defining linear sets of type $L_{f_3}$.
Whereas, in \cite{PZZ} the authors prove that linear sets of shape \eqref{formlinearset} obtained with $t=2$ are not scattered when $n$ is large enough.
\end{remark}

\bigskip

\noindent Olga Polverino and Ferdinando Zullo\\
Dipartimento di Matematica e Fisica,\\
Universit\`a degli Studi della Campania ``Luigi Vanvitelli'',\\
I--\,81100 Caserta, Italy\\
{{\em \{olga.polverino,ferdinando.zullo\}@unicampania.it}}

\end{document}